\newtheorem*{remark}{\bf Remark}
\newtheorem*{question}{\bf Question}
\newtheorem{theorem}{\bf Theorem}[section]
\newtheorem{proposition}[theorem]{\bf Proposition}
\newtheorem{definition}[theorem]{\bf Definition}
\newtheorem{Theorem}{\bf Theorem}
\newtheorem*{claim}{\bf Claim}
\newtheorem{lemma}[theorem]{\bf Lemma}
\newtheorem{corollary}[theorem]{\bf Corollary}
\newtheorem*{fact}{\bf Fact}
\newtheorem{Exampl}[Theorem]{\bf Example}
\def\C{{\mathbb C}}
\def\R{{\mathbb R}}
\def\Z{{\mathbb Z}}
\def\Q{{\mathbb Q}}
\def\D{\mathbb{D}}
\def\J{\mathcal{J}}
\def\K{\mathcal{K}}
\def\Per{\textup{Per}}
\def\bif{\textup{bif}}
\def\Mand{\mathbf{M}}
\def\J{\mathcal{J}}
\def\un{\underline{n}}
\def\um{\underline{m}}
\def\and{{\quad\text{and}\quad}}
\title{Distribution of postcritically finite polynomials III: Combinatorial continuity}
\author{Thomas Gauthier \& Gabriel Vigny}
\address{LAMFA, Universit\'e de Picardie Jules Verne, 33 rue Saint-Leu, 80039 AMIENS Cedex 1, FRANCE}
\email{thomas.gauthier@u-picardie.fr}
\email{gabriel.vigny@u-picardie.fr}
\begin{document}

\begin{abstract}
In the first part of the present paper, we continue our study of the distribution of postcritically finite parameters in the moduli space of polynomials: we show the equidistribution of Misiurewicz and parabolic parameters with prescribed combinatorics toward the bifurcation measure. Our results essentially rely on a combinatorial description of the escape locus and of the bifurcation measure developped by Kiwi and Dujardin-Favre.
\par In the second part of the paper, we construct a bifurcation measure for the connectedness locus of the quadratic anti-holomorphic family which is supported by a strict subset of the boundary of the Tricorn. We also establish an approximation property by Misiurewicz parameters in the spirit of the previous one. Finally, we answer a question of Kiwi, exhibiting in the moduli space of degree $4$ polynomials, non-trivial Impression of specific combinatorics.
\end{abstract}

\maketitle
\tableofcontents

\section*{Introduction}
In this article, we study equidistribution problems in parameter spaces of polynomials. In any holomorphic family of rational maps, DeMarco~\cite{DeMarco1} introduced a current $T_\bif$ which is supported exactly on the bifurcation locus, giving a measurable point of view to study bifurcations. Bassanelli and Berteloot~\cite{BB1} considered the self-intersections of this current which enable to study higher bifurcations phenomena. In the moduli space $\mathcal{P}_d$ of degree $d$ polynomials, the maximal self-intersection of the bifurcation current induces a \emph{bifurcation measure}, $\mu_\bif$, which is the analogue of the harmonic measure of the Mandelbrot set when $d\geq 3$. It measures the sets of maximal bifurcation phenomena (see~\cite{favredujardin}).

In particular, we want to understand the distribution of the \emph{Misiurewicz parameters} (the parameters for which all the critical points are strictly preperiodic) and parabolic parameters (the parameters for which they are exactly $d-1$ neutral cycles). Such parameters play a central role in complex dynamics. They allow computations of Hausdorff dimension of parametric fractal sets (\cite{Shishikura2,Article1}). The Misiurewicz parameters also are special from the arithmetic point of view, since they are points of small height for a well-chosen Weil height (\cite{Ingram,favregauthier}).
 They also play a central role in the geometry of the bifurcation locus: it is in the neighborhood of such parameters that we can exhibit the most complicated geometric phenomena (for example small copies of the Mandelbrot set \cite{McMullen3, gauthier_indiana} or similarity between the Julia set and the bifurcation locus \cite{orsay2,TanLei2}).
 
 ~ 

 In $\mathcal{P}_d$, Dujardin and Favre proved in \cite{favredujardin} the density of Misiurewicz parameters in the support of the bifurcation measure (see \cite{buffepstein}  for the case of rational maps, which relies on the results of \cite{BB1}). Our goal here is to give a \emph{quantitative} version of this statement in order to have a better understanding  of the distribution of these parameters. This can be seen as a parametric version of Birkhoff's ergodic theorem and also as an arithmetic equidistribution statement. Moreover, many different approaches exist to study that question, making it a deep and rich subject.

Indeed, such results have already been achieved using pluripotential theoretic tools. Levin~\cite{Levin1} showed the equidistribution of PCF (postcritically finite) parameters toward the bifurcation measure in the quadratic family using extremal properties of the Green function of the Mandelbrot set. That approach has been extended by Dujardin and Favre to prove the equidistribution of maps having a preperiodic marked critical point toward the bifurcation current of that given critical point \cite{favredujardin} (see also \cite{okuyama:distrib} for a simplified proof in the hyperbolic case). Still relying on pluripotential theory, the authors proved the equidistribution of hyberbolic PCF parameters with exponential speed of convergence in \cite{distribGV}. Notice also the results of \cite{Dujardin2012} for the case of intermediate bidegree. 

Another fruitful approach was the use of arithmetic methods, using the theorem of equidistribution of points of small height. Indeed, PCF parameters are arithmetic. Favre and Rivera-Letelier first used that approach in the quadratic family to prove the equidistribution of PCF parameters toward the bifurcation measure, with an exponential speed of convergence. That result was extended to the case of $\mathcal{P}_d$ ($d\geq 3$) by Favre and the first author in \cite{favregauthier}. Notice that the equidistribution statement proved in~\cite{favregauthier} requires technical assumptions on the pre-periods and periods of the critical orbits of the considered parameters. 

~

In here, we develop instead a combinatorics approach, based on the impression of external rays. The first part of this article is dedicated to the proof of Theorem~\ref{tm:distrib} which is of different nature. Indeed, we impose conditions on the combinatorics of angles with given period and preperiod landing at critical points instead of giving conditions for the parameter itself (see Section~\ref{sec:combinPd}). On the other hand, we make no assumption on the periods and preperiods of the critical orbits. For that, we develop further the arguments of Dujardin and Favre \cite{favredujardin}, using Kiwi's results on the combinatorial space and the landing of external rays (\cite{kiwi-portrait, kiwireal}) and the results of Przytycki and Rohde~\cite{PR} on the rigidity of Topological Collet-Eckmann repellers.

Let $\mathsf{Cb}$ be the space of critical portraits of degree $d$ polynomials (see Section~\ref{sec:prelim} for a precise definition). Pick now any $(d-1)$-tuples $\un=(n_0,\ldots,n_{d-2})$ and $\um=(m_0,\ldots,m_{d-2})$ of non-negative integers with $m_i>n_i$. We let
\[\mathsf{C}(\um,\un):=\{(\Theta_0,\ldots,\Theta_{d-2})\in\mathsf{Cb}\, ; \ d^{m_i}\theta=d^{n_i}\theta~, \ \forall \theta\in\Theta_i, \ \forall i\}.\]
When $m_i>n_i\geq1$ for all $i$, we also let
\[\mathsf{C}^*(\um,\un):=\mathsf{C}(\um,\un)\setminus\mathsf{C}(\um-\un,\underline{0}).\]
A critical portrait $\Theta\in\mathsf{C}^*(\um,\un)$ is called \emph{Misiurevwicz}. The space $\mathsf{Cb}$ is known to admit a natural probability measure $\mu_{\mathsf{Cb}}$. Dujardin and Favre have also built a \emph{landing map} $e:\mathsf{Cb}\longrightarrow\mathcal{P}_d$ which satisfies $e_*(\mu_\mathsf{Cb})=\mu_\bif$ and which sends Misiurewicz combinatorics to Misiurewicz polynomials (see Sections~\ref{sec:combinPd} and~\ref{sec:bifmes} for more details).

Our first result can be stated as follows.

\begin{Theorem}\label{tm:distrib}
Let $(\un_k)_k$ and $(\um_k)_k$ be two sequences of $(d-1)$-tules with $m_{k,j}>n_{k,j}\geq1$ and $m_{k,j}\to\infty$ as $k\to\infty$ for all $j$.  Let $X_k:=e(\mathsf{C}^*(\um_k,\un_k))$ and let $\mu_k$ be the measure
\[\mu_k:=\frac{1}{\textup{Card}(\mathsf{C}^*(\um_k,\un_k))}\sum_{\{P\}\in X_k}\mathcal{N}_{\mathsf{Cb}}(P)\cdot\delta_{\{P\}}~,\]
where $\mathcal{N}_{\mathsf{Cb}}(P)$ is the (finite) number of distinct critical portraits of the polynomial $P$. Then $\mu_k$ converges to $\mu_\bif$ as $k\to\infty$ in the weak sense of probablility measures on $\mathcal{P}_d$.
\end{Theorem}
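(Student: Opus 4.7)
The plan is to lift the problem from $\mathcal{P}_d$ to the combinatorial space $\mathsf{Cb}$, where equidistribution becomes a purely arithmetic statement, and then to push the convergence back down through the landing map $e$, exploiting the fact that $e$ is continuous on a set of full $\mu_{\mathsf{Cb}}$-measure. The whole argument reduces to two independent ingredients: combinatorial equidistribution on $\mathsf{Cb}$ and combinatorial continuity of $e$.

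First I would introduce the uniform probability measure
\[\nu_k\pe \frac{1}{\textup{Card}(\mathsf{C}^*(\um_k,\un_k))}\sum_{\Theta\in\mathsf{C}^*(\um_k,\un_k)}\delta_\Theta\]
on $\mathsf{Cb}$ and check that $e_*(\nu_k)=\mu_k$: for any Misiurewicz parameter $\{P\}\in X_k$ every critical portrait of $P$ is automatically preperiodic with the prescribed preperiods and periods, so the fiber $e^{-1}(\{P\})\cap\mathsf{C}^*(\um_k,\un_k)$ has cardinality exactly $\mathcal{N}_{\mathsf{Cb}}(P)$. Next I would prove the purely combinatorial convergence $\nu_k\to\mu_{\mathsf{Cb}}$. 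Unfolding the quotient structure of $\mathsf{Cb}$, this reduces to showing that the set of $(d-1)$-tuples in $(\R/\Z)^{d-1}$ satisfying $d^{m_{k,j}}\theta_j=d^{n_{k,j}}\theta_j$ for every $j$, with the purely periodic tuples removed, equidistributes toward normalized Lebesgue measure. Since $\textup{Card}\,\mathsf{C}(\um,\un)$ grows like $\prod_j(d^{m_{k,j}}-d^{n_{k,j}})$ and the $\mathsf{C}(\um-\un,\underline{0})$ correction is of strictly lower order once every $m_{k,j}\to\infty$, an elementary Fourier computation on $\R/\Z$ delivers the claim with an effective rate.

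The key remaining step is to show that the push-forward $e_*(\nu_k)$ converges to $e_*(\mu_{\mathsf{Cb}})=\mu_\bif$. The landing map is not globally continuous, but I would argue it is continuous at $\mu_{\mathsf{Cb}}$-almost every critical portrait: for such $\Theta$, the associated parameter $e(\Theta)$ has a Julia set which is a Topological Collet--Eckmann repeller, and the rigidity theorem of Przytycki--Rohde combined with Kiwi's landing theorem forces any approximating sequence $\Theta_n\to\Theta$ in $\mathsf{Cb}$ to satisfy $e(\Theta_n)\to e(\Theta)$ in $\mathcal{P}_d$. Granting this, the standard portmanteau principle for push-forwards under almost-everywhere continuous maps yields $\mu_k=e_*(\nu_k)\to\mu_\bif$.

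The main obstacle is this last step, which is precisely the \emph{combinatorial continuity} announced in the title: one has to establish both that TCE repellers carry full $\mu_{\mathsf{Cb}}$-mass, which requires exploiting the combinatorial description of $\mu_\bif$ developed by Kiwi and Dujardin--Favre, and that TCE rigidity transfers from the dynamical plane into an actual continuity statement for $e$ at the corresponding critical portraits (and not merely uniqueness of $e(\Theta)$ among possible landing parameters). By contrast, the identification $e_*(\nu_k)=\mu_k$ and the Fourier-style equidistribution on $\mathsf{Cb}$ are comparatively routine bookkeeping once the combinatorial continuity is in hand.
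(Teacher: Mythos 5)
Your proposal follows essentially the same route as the paper: lift $\mu_k$ to the uniform measure $\nu_k$ on the Misiurewicz combinatorics via $e_*(\nu_k)=\mu_k$ (Kiwi's singleton-impression theorem), prove the elementary equidistribution $\nu_k\to\mu_{\mathsf{Cb}}$ on the combinatorial space, and conclude by combinatorial continuity of $e$ obtained from Dujardin--Favre's full-measure TCE statement together with Kiwi's lamination rigidity and Przytycki--Rohde. The one caveat is your appeal to the ``standard'' mapping theorem for almost-everywhere continuous maps: what the rigidity argument directly yields is continuity of the restriction $e|_{\mathsf{Cb}_1}$ to a full-measure Borel set (not that the discontinuity set of a globally defined $e$ is $\mu_{\mathsf{Cb}}$-null, since $e$ is only defined as an a.e.\ radial limit), which is why the paper proves a slight generalization of the mapping theorem requiring in addition $\nu_k(\mathsf{Cb}_1)=1$ --- guaranteed because Misiurewicz portraits have singleton impressions and are therefore included in $\mathsf{Cb}_1$; with that adjustment (or an extra argument upgrading restriction-continuity via upper semicontinuity of impressions) your proof coincides with the paper's.
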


Notice that the support of $\mu_k$ is contained in the set of classes $\{P\}\in\mathcal{P}_d$ such that $P^{n_{k,j}}(c_j)=P^{m_{k,j}}(c_j)$ and $c_j$ is not periodic. Remark also that the above result does not deal with an \emph{equidistribution} property, since the considered measures take into account the \emph{combinatorial multiplicity} $\mathcal{N}_\mathsf{Cb}(P)$ of Misiureiwcz parameters. We give in Section~\ref{sec:combinPd} a description of the range of $\mathcal{N}_\mathsf{Cb}$.

Then, using a general version of the theory of Douady and Hubbard of landing of external rays for parabolic combinatorics, we also prove a similar result of equidistribution of parameters having a parabolic combinatorics toward the bifurcation measure. More precisely, we have the following result.

\begin{Theorem}\label{tm:distribpara}
Let $(\un_k)_k$ be any sequence of $(d-1)$-tuples with $n_{k,j}\to\infty$ as $k\to\infty$ for all $j$ and $\gcd(n_{k,j},n_{k,i})=1$ for all $k$ and $j\neq i$.  Let $Y_k:=e(\mathsf{C}(\un_{k},\underline{0}))$ and $\mu'_k$ be the measure
\[\mu'_k:=\frac{1}{\textup{Card}(\mathsf{C}(\un_{k},\underline{0})) }\sum_{\{P\}\in Y_k}\mathcal{M}_{\mathsf{Cb}}(P)\cdot\delta_{\{P\}}~,\]
where $\mathcal{M}_{\mathsf{Cb}}(P)$ is the (finite) number of combinatorics lying in $\mathsf{C}(\un_{k},\underline{0})$ whose impression is reduced to $\{P\}$ . Then $\mu'_k$ converges to $\mu_\bif$ as $k\to\infty$ in the weak sense of probability measures on $\mathcal{P}_d$.
\end{Theorem}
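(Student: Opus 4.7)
The plan is to mirror the structure of the proof of Theorem~\ref{tm:distrib}, adapting each step to the parabolic setting. Write $\nu_k := \frac{1}{|\mathsf{C}(\un_k,\underline{0})|}\sum_{\Theta \in \mathsf{C}(\un_k,\underline{0})} \delta_\Theta$ for the uniform probability measure on periodic combinatorics of type $\un_k$, and let $\mathcal{G}_k \subset \mathsf{C}(\un_k,\underline{0})$ denote the subset of combinatorics whose impression in $\mathcal{P}_d$ is reduced to a single point. By the very definition of $\mathcal{M}_\mathsf{Cb}(P)$, one has the clean identity
\[\mu'_k \,=\, e_*\bigl(\nu_k|_{\mathcal{G}_k}\bigr),\]
so the theorem would follow from three ingredients: (a) weak convergence $\nu_k \to \mu_\mathsf{Cb}$ in $\mathsf{Cb}$; (b) continuity of $e$ on the locus of combinatorics with point impression; and (c) vanishing $\nu_k$-mass of the bad set $\mathsf{C}(\un_k,\underline{0}) \setminus \mathcal{G}_k$.

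For (a), I would observe that the angles $\theta \in \R/\Z$ satisfying $d^n\theta = \theta$ form a family of $d^n - 1$ equally spaced points, which equidistribute toward Haar measure as $n \to \infty$. Since $\mu_\mathsf{Cb}$ is built componentwise from such Haar-type measures on the angle circles, and since the coprimality hypothesis $\gcd(n_{k,i}, n_{k,j}) = 1$ for $i \neq j$ prevents any spurious correlation between the periodic angles attached to distinct critical markers, a Chinese--Remainder / product-of-pushforwards argument upgrades the one-dimensional equidistribution to the required weak convergence $\nu_k \to \mu_\mathsf{Cb}$. For (b), I would invoke the Dujardin--Favre/Kiwi analysis of the landing map that already underlies Theorem~\ref{tm:distrib}: $e$ is continuous at any $\Theta$ whose impression reduces to a point, so for $\varphi \in C^0(\mathcal{P}_d)$ the function $\varphi \circ e$ is continuous on $\mathcal{G} := \{\Theta : \mathrm{Imp}(\Theta)\text{ is a single point}\}$. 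Combining (a)--(b) via a Portmanteau-type argument and using $e_*(\mu_\mathsf{Cb}) = \mu_\bif$ yields $\int (\varphi \circ e)\mathbf{1}_\mathcal{G}\, d\nu_k \longrightarrow \int \varphi\, d\mu_\bif$ for every $\varphi \in C^0(\mathcal{P}_d)$.

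The crux of the argument, and the main obstacle I anticipate, is (c). Unlike the Misiurewicz case treated in Theorem~\ref{tm:distrib}, periodic combinatorics correspond to parabolic parameters, whose impressions can a priori be continua because of parabolic implosion. The proof must exploit the generalization of the Douady--Hubbard landing theorem for parabolic combinatorics announced just before the statement, together with Kiwi's combinatorial description of the escape locus, to show that only a combinatorially negligible subset of $\mathsf{C}(\un_k,\underline{0})$ fails to land to a single point. Quantitatively, one should produce for each $k$ a combinatorial exhaustion such that the proportion of portraits in $\mathsf{C}(\un_k,\underline{0})$ with non-trivial impression is dominated by a term tending to $0$ as $\min_j n_{k,j} \to \infty$; the coprimality assumption plays a second role here, since it rules out simultaneous degeneracies at different critical markers and permits a marker-by-marker reduction to the one-dimensional situation already understood.
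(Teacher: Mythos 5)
Your skeleton (push equidistributed combinatorial measures forward by the landing map, using $e_*(\mu_{\mathsf{Cb}})=\mu_\bif$, continuity of $e$ on a good set, and a Portmanteau argument) is the paper's strategy, and your steps (a) and (b) are essentially what the paper already set up for Theorem~\ref{tm:distrib} --- with the caveat that what is actually used is continuity of the \emph{restriction} of $e$ to a Borel set $S$ of full $\mu_{\mathsf{Cb}}$-measure with $\nu_k(S)=1$ (Theorems~\ref{tm:contlanding} and~\ref{tm:measure}), not pointwise continuity of $e$ on $\mathsf{Cb}$, and that the equidistribution (a) is a coordinate-by-coordinate statement on $\mathsf{S}^{d-1}$ handled by Fubini as in Lemma~\ref{lm:reduction}; no Chinese Remainder or coprimality input is needed there. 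The genuine gap is your step (c), which you yourself call the crux and then leave entirely unproved: you only assert that ``one should produce'' a combinatorial exhaustion making the portraits with non-trivial impression negligible. That is precisely the content of the theorem that cannot be postponed, and your proposed quantitative ``negligible bad set'' formulation is not how it is resolved.

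The paper's resolution is Theorem~\ref{tm:rational}: under the coprimality hypothesis, \emph{every} portrait in $\mathsf{C}(\un_k,\underline{0})$ has impression reduced to a single point (a polynomial with $d-1$ distinct parabolic cycles), so the bad set is empty and no density estimate is needed; moreover this is exactly where the hypothesis $\gcd(n_{k,i},n_{k,j})=1$ is used, not in step (a) where you placed it. The argument you are missing runs as follows: for $(P,c_0,\ldots,c_{d-2})\in\mathcal{I}_{\mathcal{C}_d}(\Theta)$, the periodic external rays of angles $\theta_j$ land at periodic points $z_j$ which are either repelling or satisfy $(P^{n_j})'(z_j)=1$ (Douady--Hubbard). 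If some $z_j$ were repelling, the Douady--Hubbard stability of the landing ray under perturbation, combined with the density of the visible shift locus and the fact that polynomials of the impression are approximated by visible parameters with combinatorics close to $\Theta$ and small critical Green values, forces $c_j=z_j\in\partial\K_P$, a contradiction; hence all return multipliers equal $1$ and $\mathcal{I}_{\mathcal{C}_d}(\Theta)\subset\bigcap_j\Per_{n_j}(1)$, which is finite by the Fatou--Shishikura inequality and compactness of the connectedness locus (Lemma~\ref{lm:finite}) --- this finiteness is where coprimality enters, since it guarantees the $d-1$ period conditions cut out distinct cycles. Connectedness of the impression (Proposition~\ref{prop:kiwi}) then forces a singleton. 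Without this landing theorem (or an equivalent substitute), your proposal does not prove Theorem~\ref{tm:distribpara}; with it, your steps (a), (b) and the identity $\mu'_k=e_*(\nu_k)$ go through exactly as in the proof of Theorem~\ref{tm:distrib}.
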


Notice that those parameters have $(d-1)$ distinct parabolic cycles. In order to obtain the equidistribution of totally parabolic polynomials, one would need a precise control on the cardinality of combinatorics that land at a given totally parabolic polynomial. It is nevertheless the only general result in that direction existing so far.

~

In the second part of the present work, we adapt the above combinatorial methods to the case of the parameters space of quadratic antiholomorphic polynomials, i.e. the family 
\[f_c(z):=\bar{z}^2+c~, \ c\in\C.\]
The connectedness locus, in that setting, is known as the Tricorn $\Mand_2^*$. As observed by Inou and Mukherjee in \cite{inoumuk}, the harmonic measure of the Tricorn is not a good candidate to measure bifurcation phenomena: the existence of (real analytic) stable parabolic arcs does not allow the density of PCF parameters.  We develop further the theory of landing map of external rays in that setting. Precisely, we prove the following.

\begin{Theorem}\label{tm:mubifanti}
Almost any external ray of the Tricorn $\Mand_2^*$ lands and, if $\ell:\R/\Z\longrightarrow\partial\Mand_2^*$ is the landing map, then $\ell$ is measurable and there exists a set $\mathsf{R}\subset\R/\Z$ of full Lebesgue measure such that $\ell|_\mathsf{R}$ is continuous.
\end{Theorem}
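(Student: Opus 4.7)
The plan is to combine the uniformization of $\C\setminus\Mand_2^*$ at infinity with the classical boundary behavior of conformal maps. Since the Tricorn is compact and connected (a result of Nakane), its complement in $\hat{\C}$ is simply connected and there is a unique conformal isomorphism
\[\Phi:\{|w|>1\}\longrightarrow\C\setminus\Mand_2^*, \qquad \Phi(w)/w\to 1 \text{ as }|w|\to\infty.\]
External rays are defined as $\mathcal{R}_\theta:=\Phi(\{re^{2\pi i\theta}\,;\,r>1\})$ and the landing point is $\ell(\theta):=\lim_{r\to 1^+}\Phi(re^{2\pi i\theta})$ whenever this radial limit exists.

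Applying Fatou's theorem to the bounded univalent map $z\mapsto 1/\Phi(1/z)$ on $\D\setminus\{0\}$, the radial limit $\ell(\theta)$ exists for almost every $\theta\in\R/\Z$; denote by $\mathsf{L}$ this full-measure set. For $\theta\in\mathsf{L}$, the radial limit necessarily lies in $\partial\Mand_2^*$, so $\mathcal{R}_\theta$ lands at $\ell(\theta)$. Measurability of $\ell$ (extended arbitrarily outside $\mathsf{L}$) is automatic: it is the pointwise almost-everywhere limit, as $r\to 1^+$ along a sequence, of the continuous maps $\theta\mapsto\Phi(re^{2\pi i\theta})$.

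The delicate point is the existence of a set $\mathsf{R}$ of \emph{full} Lebesgue measure on which $\ell$ is continuous, since Lusin's theorem would only give continuity on sets of measure $1-\varepsilon$. For this I would invoke the refined boundary theory of conformal maps (Beurling, Collingwood-Lohwater, Pommerenke): the set of angles at which the associated prime end has singleton impression, equivalently at which the radial cluster set reduces to one point, has full Lebesgue measure on $\partial\D$. Let $\mathsf{R}\subseteq\mathsf{L}$ denote this set. If $\theta_n\to\theta_0$ with $\theta_n,\theta_0\in\mathsf{R}$, then any accumulation point of $(\ell(\theta_n))_n$ must belong to the impression of the prime end at $\theta_0$, which is the singleton $\{\ell(\theta_0)\}$; hence $\ell(\theta_n)\to\ell(\theta_0)$, proving that $\ell|_\mathsf{R}$ is continuous. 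The main obstacle is precisely this last step: the preceding constructions are straightforward adaptations of the holomorphic Mandelbrot set picture, whereas the sharp full-measure continuity requires the specific input that almost every prime end of a simply connected planar domain is of the first kind.
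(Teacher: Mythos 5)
Your reductions at the beginning are fine: a.e.\ existence of radial limits for the bounded univalent map (Fatou/Beurling), measurability of $\ell$ as an a.e.\ pointwise limit of continuous maps, and the observation that continuity of $\ell$ on a set $\mathsf{R}$ would follow once one knows that for every $\theta\in\mathsf{R}$ the prime-end impression at $\theta$ is the singleton $\{\ell(\theta)\}$. The gap is exactly at the step you flag as the main obstacle: the statement you invoke --- that for an arbitrary simply connected planar domain the set of angles whose prime end is of the first kind (singleton impression) has full Lebesgue measure --- is not a theorem. What the Collingwood--Lohwater/Pommerenke theory gives is much weaker: almost every prime end is of the first \emph{or second} kind, i.e.\ the \emph{principal} set (the radial cluster set) is a.e.\ a singleton, while the impression may remain a nondegenerate continuum; second-kind prime ends can occur on a set of positive, even full, measure for general domains. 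If singleton impressions a.e.\ were a soft consequence of conformal mapping theory, the corresponding statement for the Mandelbrot set would not have required the Collet--Eckmann machinery of Graczyk--\'Swi\k{a}tek and Smirnov, and the present theorem would be immediate; moreover, for the Tricorn the boundary is genuinely wild (it is not pathwise connected, and the paper itself exhibits combinatorics whose impressions contain nontrivial parabolic arcs), so no purely potential-theoretic argument can rule out nondegenerate impressions on a large set of angles.

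The paper closes precisely this gap with dynamical input: each angle $\theta$ is assigned, via the embedding $f_c^2=P_\lambda$ into a family of degree $4$ polynomials and Lemma~\ref{lm:impressionanti}, a critical portrait $\Theta\in\mathsf{Cb}_0$; Kiwi's combinatorial rigidity combined with the Przytycki--Rohde rigidity of TCE maps (Theorem~\ref{tm:combcont}) shows that the impression $\mathcal{I}_{\mathcal{C}_4}(\Theta)$ is a singleton whenever it contains a TCE polynomial, and a Smirnov/Dujardin--Favre style coding argument (the Claim in the proof of Theorem~\ref{tm:landinganti}) bounds the Hausdorff dimension of the set of angles failing this condition by $\log 3/\log 4<1$, hence it has Lebesgue measure zero. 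Your proposal would need to replace the claimed general prime-end theorem by an argument of this type (or some other mechanism specific to the Tricorn forcing singleton impressions a.e.); as written, the continuity of $\ell$ on a full-measure set is unsupported. Note also that the paper's set $\mathsf{R}$ is arranged to contain the Misiurewicz angles, which is used later and again does not come from soft boundary theory.
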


The use of external rays for the Tricorn has been initiated by Nakane in \cite{Nakane} to prove the connectedness of the Tricorn. A finer study of the topological and combinatorial properties was developped by several authors (e.g. \cite{HubbardSchleicher} where the authors showed that the Tricorn is not path connected).

To prove Theorem~\ref{tm:mubifanti}, we imbed the quadratic antiholomorphic polynomials family in a complex family of degree 4 polynomials maps in order to use again Kiwi's results on the combinatorial space to prove the equidistribution of Misiurewicz parameters. We now define the \emph{bifurcation measure} of the Tricorn $\Mand_2^*$ as
\[\mu_\bif^*:=(\ell)_*\left(\lambda_{\R/\Z}\right)~.\]
 We believe that this measure should equidistribute other dynamical phenomena, as hyperbolic postcritically finite parameters for example. For $n>k>0$, we consider the following set of \emph{Misiurewicz parameters}:
\[\Per^*(n,k):=\{c\in\C\, ; f_c^n(0)=f_c^k(0) \ \text{and} \ f^{n-k}_c(0)\neq0\}~,\]
similarly we consider the following set of \emph{Misiurewicz combinatorics}: 
\[\mathsf{C}^*(n,k):=\{\theta\in \R/\Z \, ; (-2)^{n-1}\theta=(-2)^{k-1}\theta\ \text{and} \ (-2)^{n-k}(\theta)\neq\theta\}~.\]

Building on the above definition of the bifurcation measure, we can describe the distribution of the sets $(\ell)_*(\mathsf{C}^*(n,k))$ which is a subset of $\Per^*(n,k)$. This is the content of our next result.
\begin{Theorem}\label{tm:distribanti}
For any $1<k<n$, the set $\Per^*(n,k)$ is finite and $(\ell)_*(\mathsf{C}^*(n,k)) \subset Per^*(2n,2k)$. Moreover, for any sequence $1<k(n)<n$, the measure 
\[\mu_n^*:= \frac{1}{\textup{Card}(\mathsf{C}^*(n,k(n)))} \sum_{c\in(\ell)_*(\mathsf{C}^*(n,k(n)))}\mathcal{N}_{\R/\Z}(c)\cdot \delta_c~,\]
where $\mathcal{N}_{\R/\Z}(c):=\textup{Card}\{\theta\in\mathsf{C}^*(n,k(n))\, ; \ \ell(\theta)=c\}\geq1$, converge to $\mu_\bif^*$ in the weak sense of measures on $\C$. 
\end{Theorem}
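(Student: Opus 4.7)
The plan is to reduce the claimed convergence to the push-forward under $\ell$ of a well-understood weak convergence of uniform measures on torsion sets of $\R/\Z$ towards Lebesgue measure. The starting point is the identity
\[\mu_n^*=\ell_*\nu_n,\quad\text{where}\quad\nu_n:=\frac{1}{\textup{Card}(\mathsf{C}^*(n,k(n)))}\sum_{\theta\in\mathsf{C}^*(n,k(n))}\delta_\theta,\]
obtained by regrouping angles with a common landing point, which produces exactly the multiplicity $\mathcal{N}_{\R/\Z}(c)$. Since $\mu_\bif^*=\ell_*\lambda_{\R/\Z}$ by definition, the theorem reduces to showing $\ell_*\nu_n\to\ell_*\lambda_{\R/\Z}$ weakly on $\C$.

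For the preliminary assertions, I would embed the anti-holomorphic family into the holomorphic degree-$4$ family $g_{c,t}(z):=(z^2+t)^2+c$, $(c,t)\in\C^2$, which satisfies $g_{c,\bar c}=f_c^2$. The relation $f_c^n(0)=f_c^k(0)$ implies $f_c^{2n}(0)=f_c^{2k}(0)$, i.e.\ $g_{c,\bar c}^n(0)=g_{c,\bar c}^k(0)$, so $\Per^*(n,k)$ lies in the real-analytic zero set of $F(c):=g_{c,\bar c}^n(0)-g_{c,\bar c}^k(0)$, a polynomial in $(c,\bar c)$; a dimension count shows this set is generically discrete, and $1$-dimensional components are ruled out by the rigidity of strictly pre-periodic parameters (Ma\~n\'e--Sad--Sullivan-type arguments adapted to the anti-holomorphic setting, or equivalently the Dujardin--Favre activity/passivity dichotomy applied to the holomorphic family $g_{c,t}$). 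For the inclusion, the key arithmetic observation is that $(-2)^{n-1}\theta=(-2)^{k-1}\theta$ forces $4^{n}\theta=4^{k}\theta$; since the landing map $\ell$ is constructed through the embedding in the holomorphic family $g_{c,t}$, whose external rays transform angles by multiplication by $4$, the Douady--Hubbard correspondence at $c=\ell(\theta)$ yields $g_{c,\bar c}^n(0)=g_{c,\bar c}^k(0)$, i.e.\ $f_c^{2n}(0)=f_c^{2k}(0)$, while $(-2)^{n-k}\theta\neq\theta$ translates into $f_c^{2(n-k)}(0)\neq 0$.

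The weak convergence $\nu_n\to\lambda_{\R/\Z}$ is a direct Fourier computation: the set $\{\theta\in\R/\Z\, ;\, (-2)^{n-1}\theta=(-2)^{k(n)-1}\theta\}$ is the kernel of multiplication by $M_n:=(-2)^{n-1}-(-2)^{k(n)-1}$ in $\R/\Z$, hence consists of the $|M_n|$-th roots of unity, so the corresponding uniform measure converges weakly to Lebesgue as $|M_n|\to\infty$; removing the $O(2^{n-k(n)})$ strictly periodic angles does not alter the limit since $\textup{Card}(\mathsf{C}^*(n,k(n)))\sim 2^{n-1}$. To push this forward, Theorem~\ref{tm:mubifanti} ensures that $\ell$ is measurable with Lebesgue-negligible discontinuity set, so for any bounded continuous $\phi:\C\to\R$ the composition $\phi\circ\ell:\R/\Z\to\R$ is bounded with $\lambda_{\R/\Z}$-null discontinuity set, hence is a $\lambda_{\R/\Z}$-continuity (Riemann-integrable) function. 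The Portmanteau theorem then yields
\[\int\phi\,d(\ell_*\nu_n)=\int\phi\circ\ell\,d\nu_n\longrightarrow\int\phi\circ\ell\,d\lambda_{\R/\Z}=\int\phi\,d(\ell_*\lambda_{\R/\Z}),\]
which is the sought convergence $\mu_n^*\to\mu_\bif^*$.

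The main obstacle is handling the push-forward through a landing map that is only Lebesgue-a.e.\ continuous: the atomic measures $\nu_n$ are concentrated on countably many pre-periodic angles which may entirely lie outside the continuity set of $\ell$, so pointwise continuity is unavailable. The resolution is to invoke the measure-theoretic form of Portmanteau for $\lambda_{\R/\Z}$-continuity functions, rather than for continuous functions, which exactly matches the conclusion of Theorem~\ref{tm:mubifanti} that the discontinuity set of $\ell$ is Lebesgue-null; if needed, one can realize this by a Lusin-type approximation by continuous functions up to open sets which are finite unions of arcs, so that the boundary is Lebesgue-negligible and Portmanteau applies.
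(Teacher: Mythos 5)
The decisive step of your proposal is the push-forward of $\nu_n\to\lambda_{\R/\Z}$ through $\ell$, and this is where the argument breaks. Theorem~\ref{tm:mubifanti} does \emph{not} say that the discontinuity set of $\ell$ is Lebesgue-negligible; it says that the restriction $\ell|_{\mathsf{R}}$ to some full-measure set $\mathsf{R}$ is continuous. These are genuinely different properties: a map can be continuous in restriction to a full-measure set while being discontinuous at every point of the circle, so $\phi\circ\ell$ need not be a $\lambda_{\R/\Z}$-continuity function, the a.e.-continuity form of the mapping/Portmanteau theorem you invoke is not available, and no Lusin-type approximation creates it. Worse, the conclusion can actually fail under the hypotheses you use: take a map equal to $a$ off a countable dense set and to $b\neq a$ on it, and $\nu_n$ uniform on torsion points; then $\nu_n\to\lambda_{\R/\Z}$, the restriction to the co-countable set is continuous, yet the push-forwards converge to $\delta_b\neq\delta_a$. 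The correct resolution is the opposite of your workaround: one must prove that the Misiurewicz angles themselves lie in the continuity set $\mathsf{R}$ --- this is what Theorem~\ref{tm:landinganti} and Lemma~\ref{lm:magic} do, using Kiwi's rigidity of Misiurewicz critical portraits (Theorem~\ref{tm:kiwipcf}) through Corollary~\ref{cor:impressionanti} to show the prime-end impression of each $\theta\in\mathsf{R}_{\textup{mis}}$ is a single Misiurewicz parameter in $\Per^*(2n,2k)$ --- and then apply the strengthened mapping theorem (Theorem~\ref{tm:measure}), whose extra hypothesis $\nu_n(\mathsf{R})=1$ is precisely what saves the day. Your appeal to a ``Douady--Hubbard correspondence'' gestures at this, but in the induced degree-$4$ family the landing statement is exactly Kiwi's theorem, and the passage from $(-2)^{n-k}\theta\neq\theta$ to $f_c^{2(n-k)}(0)\neq0$ is not a formal translation either: in the paper it comes from $c\in\partial\Mand_2^*$, which rules out centers of hyperbolic components.

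There are two further gaps. For the finiteness of $\Per^*(n,k)$, ``MSS-type rigidity'' or the activity/passivity dichotomy cannot be used as stated: the family $c\mapsto f_c$ is not holomorphic in the parameter, a persistently preperiodic critical point along a complex curve of the family $g_{c,t}$ is merely passive (no contradiction), and the real-analytic zero set of the single equation $F(c)=g_{c,\bar c}^{\,n}(0)-g_{c,\bar c}^{\,k}(0)$ could a priori contain arcs --- recall the Tricorn does contain real-analytic arcs along which a (parabolic) critical orbit relation persists, so discreteness genuinely has to be proven. The paper's route (Lemma~\ref{lm:finiteintersection}) exploits that for $\lambda\in\R^2$ \emph{all three} critical points of $P_\lambda=f_c^2$ satisfy orbit relations (since $f_c(c_1)=f_c(c_2)=0$, Lemma~\ref{lm:complexTricorn2}), so $\Per(n,k)$ maps into an algebraic subvariety of $\mathcal{P}_4$ contained in the compact connectedness locus $\mathcal{C}_4$, hence finite. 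Finally, in the equidistribution step, discarding the periodic angles ``since $\textup{Card}\sim 2^{n-1}$'' is insufficient when $k(n)$ stays bounded (then $2^{n-k(n)}$ is a definite proportion of $2^{n-1}$): you need that the removed periodic set is itself equidistributed and that the remaining proportion is bounded below, which is the content of Lemma~\ref{lm:equidsitrcircle} and Theorem~\ref{tm:distribantiangle}.
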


In a certain sense, parameters of $(\ell)_*(\mathsf{C}^*(n,k(n)))$ are \emph{truly} of pure period $n-k(n)$ and preperiod $k(n)$, since their combinatorics also have the same property.

Notice that the question of counting parameters such that $f_c^n(0)=f_c^k(0)$ is of \emph{real} algebraic nature and is difficult. On that matter, notice the difficult work \cite{MNS} where the authors notably count the number of hyperbolic components of the Tricorn. 

In order to prove the above result, we relate the Misiurewicz character of $f_c$ to the Misiurewicz character of the induced degree $4$ polynomials for which we can apply known results in landing of external rays. We also relate the measure $\mu_\bif^*$ to the bifurcation measure $\nu_\bif$ of that family of degree $4$ polynomials by the inclusion $\mathrm{supp}(\mu_\bif^*)\subset \mathrm{supp}(\nu_\bif)\cap \mathbb{R}^2$. This follows from the fact that Misiurewicz parameters belong to the support of $\nu_\bif$ and are dense in it. 

~

Let us now state our last result, coming back to the moduli space $\mathcal{P}_4$ of critically marked degree $4$ polynomials. Relying the main result of Inou and Mukherjee~\cite{inoumuk}, we exhibit the following family of examples, answering in the case $d=4$ a question asked by Kiwi in his PhD Thesis (see \cite[\S14, page 42]{kiwithese}).

\begin{Exampl}[Non-trivial Impressions]\label{tm:non-trivial}
There exists an infinite set of critical portraits $\Theta\in\mathsf{Cb}$ for which the impression $\mathcal{I}_{\mathcal{C}_4}(\Theta)$ of $\Theta$ in the moduli space of degree $4$ is not reduced to a point. More precisely, the impression of such $\Theta$ can be chosen to contain a non-trivial smooth arc consisting in polynomials $P$ having a parabolic cycle of given period attracting all critical points of $P$.
\end{Exampl}

In a first section, we start with general preliminaries, notably on the combinatorial space and the landing of external rays. We then give the proof of Theorem~\ref{tm:distrib} in the particular case of the quadratic family. That proof is of folklore nature in that case but we believe it will help the global understanding of the reader.
 In Part \ref{Part:modd}, we develop a general version of the theory of Douady and Hubbard of landing of external rays for parabolic combinatorics and we prove Theorem~\ref{tm:distrib} and Theorem~\ref{tm:distribpara}. In Part~\ref{Part:anti}, we treat the case of the Tricorn. We start by exploring the combinatorial space in that setting and deduce Example~\ref{tm:non-trivial} above. Finally, we prove Theorems~\ref{tm:mubifanti} and \ref{tm:distribanti}.

\paragraph*{Acknowledgement}
Both authors are partially supported by the ANR project Lambda ANR-13-BS01-0002.

\section{General preliminaries}\label{sec:prelim}

\subsection{The moduli space and the visible shift locus}

The \emph{moduli space} $\mathcal{P}_d$ of degree $d$ polynomials is the space of affine conjugacy classes of degree $d$ polynomials with $d-1$ marked critical points. A point in $\mathcal{P}_d$ is represented by a $d$-tuple $(P, c_0,\ldots, c_{d-2})$ where $P$ is a polynomial of degree $d$, and the $c_i$'s are complex numbers such that $\{c_0,\ldots,c_{d-2}\}$ is the set of all critical points of $P$. For each $i$, $\textup{Card}\{j,\ c_j = c_i\}$ is the order of vanishing of $P'$ at $c_i$. Two points $(P,c_0,\ldots,c_{d-2})$ and $(\tilde P,\tilde c_0,\ldots,\tilde c_{d-2})$ are identified when there exists an affine map $\phi$ such that $\tilde P=\phi\circ P\circ \phi^{-1}$, and $\tilde c_i = \phi(c_i)$ for all $0\leq i\leq d-2$.

The set $\mathcal{P}_d$ is a quasiprojective variety of dimension $d-1$, and is isomorphic to the quotient of $\C^{d-1}$ by the finite group of $(d-1)$-th roots of unity acting linearly and diagonally on $\C^{d-1}$ (see \cite{Silverman}). When $d\geq3$, this space admits a unique singularity at the point $(z^d, 0,\ldots, 0)$.

~

Recall that for $(P,c_0,\ldots,c_{d-2})\in\mathcal{P}_d$, the \emph{Green function of} $P$ is defined by
\[g_P(z):=\lim_{n\to+\infty}d^{-n}\log^+|P^n(z)|, \ z\in\C~.\]
It satisfies $\K_P=\{g_P=0\}$ and it is a psh and continuous function of $(P,z)\in\mathcal{P}_d\times\C$. Let
\[G(P):=\max_{0\leq j\leq d-2}g_P(c_j)~.\]
The \emph{connectedness locus} $\mathcal{C}_d:=\{(P,c_0,\ldots,c_{d-2})\in\mathcal{P}_d \, ; \ \J_P \text{ is connected}\}$ is a compact set and satisfies $\mathcal{C}_d=\{(P,c_0,\ldots,c_{d-2})\in\mathcal{P}_d \, ; \ G(P)=0\}$ (see \cite{BH}).

~

We also call \emph{B\"ottcher coordinate} of $P$ at infinity the unique biholomorphic map
\[\phi_P:\C\setminus\{z\in\C \, ; \ g_P(z)\leq G(P)\}\longrightarrow\C\setminus\overline{\D}(0,\exp(G(P)))\]
which is tangent to the identity at infinity and satisfies
\begin{enumerate}
\item $\phi_P\circ P=(\phi_P)^d$ on $\C\setminus\{z\in\C \, ; \ g_P(z)\leq G(P)\}$,
\item $g_P(z)=\log|\phi_P(z)|$ for all $z\in \C\setminus\{z\in\C \, ; \ g_P(z)\leq G(P)\}$.
\end{enumerate}
The \emph{external ray} for $P$ of angle $\theta$ is the set 
\[R_\theta:=\phi_P^{-1}(]\exp(G(P)),+\infty[e^{2i\pi\theta})~.\]
Such a ray may be extended as a smooth flow line of the gradient $\nabla g_P$ in $\{g_P>0\}=\C\setminus\K_P$. If it meets a critical point $c_i$ of $P$, we say that $R_\theta$ terminates at $c_i$. 

\begin{definition}
We say that a $(P,c_0,\ldots,c_{d-2})\in\mathcal{P}_d$ lies in the \emph{shift locus} $\mathcal{S}_d$ if all critical points of $P$ escape under iteration. We also say that a class $(P,c_0,\ldots,c_{d-2})\in\mathcal{S}_d$ lies in the \emph{visible shift locus} $\mathcal{S}_{d}^{\textup{vis}}$ if for all $0\leq i\leq d-1$, there exists $\deg_{c_i}(P)$ external rays terminating at $c_i$ and $P(c_i)$ belongs to an external ray.
\end{definition}

When $(P,c_0,\ldots,c_{d-2})\in\mathcal{S}^{\textup{vis}}_d$, we denote by $\Theta(P)$ the \emph{combinatorics} (or \emph{critical portrait}) of $P$, i.e. the $(d-1)$-tuple
$\Theta(P):=(\Theta_0,\ldots,\Theta_{d-2})$ of finite subsets of $\R/\Z$ for which $\Theta_i$ is exactly the collection of angles of rays landing at $c_i$.

\subsection{The combinatorial space}
We follow the definition given by Dujardin and Favre \cite{favredujardin}.  Two finite and disjoint subsets $\Theta_1,\Theta_2\subset\R/\Z$ are said to be \emph{unlinked} if $\Theta_2$ is included in a single connected component of $(\R/\Z) \setminus \Theta_1$.
We let $\mathsf{S}$ be the set of pairs $\{\alpha,\alpha'\}$ contained in the circle $\R/\Z$, such that $d\alpha=d\alpha'$ and $\alpha\neq\alpha'$.
 First, we can define the simple combinatorial space.
\begin{definition}
We let $\mathsf{Cb}_0$ be the set of $(d-1)$-tuples $\Theta=(\Theta_0,\ldots,\Theta_{d-2})\in\mathsf{S}^{d-1}$ such that for all $i\neq j$, the two pairs $\Theta_i$ and $\Theta_j$ are disjoint and unlinked.
\end{definition}

It is known that $\mathsf{Cb}_0$ has a natural structure of translation manifold. It is also known to carry a natural invariant probability measure that we will denote $\mu_{\mathsf{Cb}_0}$ (see \cite[\S 7]{favredujardin}).
We now may define the full combinatorial space.

\begin{definition}
The set $\mathsf{Cb}$ is the collection of all $(d-1)$-tuples $(\Theta_0,\ldots,\Theta_{d-2})$ of finite sets in $\R/\Z$ satisfying the following four conditions:
\begin{itemize}
\item for any fixed $i$, $\Theta_i=\{\theta_1,\ldots,\theta_{k(i)}\}$ and $d\theta_j = d\theta_1$ for all $j$;
\item for any $i\neq j$, either $\Theta_i=\Theta_j$ or $\Theta_i\cap\Theta_j=\emptyset$;
\item if $N$ is the total number of distinct $\Theta_i$'s, then $\textup{Card} \bigcup_i\Theta_i=d+N-1$;
\item for any $i\neq j$ such that $\Theta_i\cap\Theta_j=\emptyset$, the sets $\Theta_i$ and $\Theta_j$ are unlinked.
\end{itemize}
\end{definition}

\begin{remark}
When $(P,c_0,\ldots,c_{d-2})\in\mathcal{S}_d^{\textup{vis}}$, then $\Theta(P)\in\mathsf{Cb}$.
\end{remark}

We will use the following.

\begin{proposition}[Kiwi, Dujardin-Favre]\label{tm:muCb}
The set $\mathsf{Cb}$ is compact and path connected and contains $\mathsf{Cb}_0$ as a dense open subset.
\end{proposition}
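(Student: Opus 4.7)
The plan is to topologize $\mathsf{Cb}$ by viewing each entry $\Theta_i$ as a finite subset of $\R/\Z$ with the Hausdorff distance, so that $\mathsf{Cb}$ sits inside $\bigl(K^{\leq d}(\R/\Z)\bigr)^{d-1}$, where $K^{\leq d}$ is the space of finite subsets of cardinality at most $d$. This ambient space is compact: the bound on cardinality is automatic since $d\theta_j=d\theta_1$ forces $\Theta_i$ to lie in the finite fiber $(z\mapsto dz)^{-1}(d\theta_1)$.

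\emph{Step 1: Compactness.} I would verify that each of the four defining conditions is closed in the Hausdorff topology. The condition that all elements of $\Theta_i$ share the same image under $z\mapsto dz$ is obviously closed. The dichotomy "either $\Theta_i=\Theta_j$ or $\Theta_i\cap\Theta_j=\emptyset$" is not closed in isolation (pairs can partially coincide in the limit), but it is closed once coupled with the counting identity $\textup{Card}\bigl(\bigcup_i\Theta_i\bigr)=d+N-1$: in a Hausdorff limit the cardinality of the union can only jump downwards, and the balance $d+N-1$ between $N$ (the number of distinct $\Theta_i$'s) and the total count forces any partial overlap to become a total overlap, recovering the dichotomy. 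Finally, taking the non-strict version of unlinkedness (allowing $\Theta_j$ to touch the boundary of a component of $(\R/\Z)\setminus\Theta_i$) gives a closed condition.

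\emph{Step 2: $\mathsf{Cb}_0$ is open and dense.} Openness is immediate: belonging to $\mathsf{Cb}_0$ means each $\Theta_i$ has exactly two distinct points, the pairs are pairwise disjoint, and unlinkedness is strict, all open conditions. For density, given $\Theta\in\mathsf{Cb}$ I would perturb inside the finite fibers of $z\mapsto dz$. The counting identity $\textup{Card}(\bigcup_i\Theta_i)=d+N-1$ records exactly the total degeneration from the generic case $N=d-1$, $|\Theta_i|=2$: every fat set $\Theta_i$ of cardinality $k>2$ can be split into $k-1$ disjoint pairs by slightly moving $k-2$ angles to nearby preimages in the same fiber, and every coincidence $\Theta_i=\Theta_j$ can be broken by analogous infinitesimal sliding. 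Because we move angles only within small neighborhoods, the cyclic order of $\bigcup_i\Theta_i$ on $\R/\Z$ is preserved, so previously unlinked pairs remain unlinked and the perturbed tuple lies in $\mathsf{Cb}_0$.

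\emph{Step 3: Path connectedness.} Given Steps 1 and 2, it is enough to connect $\mathsf{Cb}_0$ to itself by paths and to join every $\Theta\in\mathsf{Cb}\setminus\mathsf{Cb}_0$ to $\mathsf{Cb}_0$. The splittings and separations above are themselves continuous deformations parametrized by $t\in[0,1]$ along the fibers of $z\mapsto dz$, hence provide explicit paths from $\Theta$ into $\mathsf{Cb}_0$. For the path connectedness of $\mathsf{Cb}_0$, I would exploit its translation-manifold structure recalled in the excerpt (see \cite{favredujardin}): any two generic critical portraits can be joined by moving one pair at a time along a generic arc avoiding the walls where two pairs would become linked or intersect, which is a codimension-one condition and so can be circumvented.

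\emph{Main obstacle.} The genuinely delicate step is the density argument, since unlinkedness is a \emph{global} combinatorial constraint coupling all pairs, and a naive perturbation of one $\Theta_i$ can produce linked pairs elsewhere. The key is to perform the splitting in neighborhoods small enough to preserve the cyclic order of $\bigcup_i\Theta_i$ on $\R/\Z$, simultaneously for all four defining conditions; making this localization uniform is where the counting identity $\textup{Card}(\bigcup_i\Theta_i)=d+N-1$ plays its essential role, as it pins down exactly how many degrees of freedom are available in each fiber.
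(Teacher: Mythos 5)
Your argument is built on the product Hausdorff-metric topology on tuples of finite sets, and with that topology the proposition is simply false, so Steps 1 and 2 cannot be repaired as stated. Concretely, take $d=3$, fix $a\in\R/\Z$, and consider
$\Theta^{(n)}=\bigl(\{a,a+\tfrac13\},\{a+\tfrac13+\tfrac1n,\,a+\tfrac23+\tfrac1n\}\bigr)\in\mathsf{Cb}_0$.
Its Hausdorff limit is $\bigl(\{a,a+\tfrac13\},\{a+\tfrac13,a+\tfrac23\}\bigr)$, which violates both the dichotomy (the two pairs overlap partially without being equal) and the counting identity ($N=2$ but the union has $3\neq d+N-1$ elements); hence it does not lie in $\mathsf{Cb}$, and no subsequence of $\Theta^{(n)}$ converges in $\mathsf{Cb}$ for the Hausdorff metric. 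So $\mathsf{Cb}$ is not closed, hence not compact, in your topology: the counting identity does not ``force partial overlap to become total overlap'' in the limit — it merely certifies that the limit configuration has left $\mathsf{Cb}$, which is exactly the failure of closedness. Density fails as well: since all angles of a given $\Theta_i$ lie in one fiber of $\theta\mapsto d\theta$, they are pairwise at distance at least $1/d$, so a Hausdorff limit of $2$-point sets is again a $2$-point set, and any portrait with some $\textup{Card}(\Theta_i)\geq 3$ is at Hausdorff distance at least $1/(2d)$ from $\mathsf{Cb}_0$. The statement refers to Kiwi's topology (the one Dujardin--Favre use), in which, roughly, $\Theta^{(n)}\to\Theta$ when every Hausdorff accumulation value of each $\Theta_i^{(n)}$ is \emph{contained in} $\Theta_i$; in that topology the sequence above converges to the fat portrait $\bigl(\{a,a+\tfrac13,a+\tfrac23\},\{a,a+\tfrac13,a+\tfrac23\}\bigr)$, and this merging of colliding pairs into shared fat sets is precisely the point of the third and fourth defining conditions. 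Any proof must be carried out for that topology; note also that the paper itself offers no proof but cites Kiwi and Dujardin--Favre.

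Two further points would need fixing even after changing the topology. First, ``splitting a fat set by slightly moving $k-2$ angles to nearby preimages in the same fiber'' is incoherent: within a fiber the admissible angles are $1/d$-separated, so there are no nearby preimages; the correct approximation keeps subsets of the fat set and translates \emph{whole pairs} $\{\alpha,\alpha+k/d\}$ by a small common amount, which converges to the fat portrait only in the weaker topology. Second, your Step 3 claims path connectedness of $\mathsf{Cb}_0$ by generic arcs, but $\mathsf{Cb}_0$ is an open subset of $\mathsf{S}^{d-1}$, which has $\lfloor d/2\rfloor^{d-1}$ connected components (already $8$ for $d=4$), so $\mathsf{Cb}_0$ is in general disconnected: one cannot change the ``type'' $k$ of a pair $\{\alpha,\alpha+k/d\}$ without leaving $\mathsf{Cb}_0$. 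Path connectedness of $\mathsf{Cb}$ therefore genuinely requires passing through the degenerate portraits of $\mathsf{Cb}\setminus\mathsf{Cb}_0$ (collide pairs into a fat set, then re-split them with a different type), which again is only meaningful in Kiwi's topology.
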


Then, we define, as Dujardin and Favre, the combinatorial measure $\mu_{\mathsf{Cb}}$ as the only probability measure on $\mu_\mathsf{Cb}$ which coincides with $\mu_{\mathsf{Cb}_0}$ on $\mathsf{Cb}_0$ and does not charge $\mathsf{Cb}\setminus\mathsf{Cb}_0$.

Following Kiwi \cite{kiwi-portrait}, we will use the following definition.
\begin{definition}
Pick $\Theta\in\mathsf{Cb}$. We say that $P$ lies in the \emph{impression} of $\Theta$ if there exists a sequence $P_n\in\mathcal{S}_d^\textup{vis}$ converging to $P$ such that the corresponding critical portraits $\Theta(P_n)$ converge to $\Theta$.
\end{definition}

We denote by $\mathcal{I}_{\mathcal{C}_d}(\Theta)$ the impression of any $\Theta\in\mathsf{Cb}$. Kiwi proved the following result concerning basic properties of the impression of a combinatorics (see \cite{kiwi-portrait}).

\begin{proposition}\label{prop:kiwi}
For any $\Theta\in\mathsf{Cb}$, the impression $\mathcal{I}_{\mathcal{C}_d}(\Theta)$ is a non-empty connected subset of $\partial \mathcal{C}_d\cap \partial \mathcal{S}_d$.
\end{proposition}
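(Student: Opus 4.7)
The plan is to realise $\mathcal{I}_{\mathcal{C}_d}(\Theta)$ as a decreasing nested intersection of non-empty compact connected subsets of $\partial\mathcal{C}_d\cap\partial\mathcal{S}_d$, from which the three assertions follow at once. Fix $\Theta\in\mathsf{Cb}$. Using Proposition~\ref{tm:muCb} (density of $\mathsf{Cb}_0$ and path-connectedness of $\mathsf{Cb}$), I would choose a decreasing basis $(U_n)_n$ of connected open neighbourhoods of $\Theta$ in $\mathsf{Cb}$ with $U_n\cap\mathsf{Cb}_0$ non-empty and connected. The joint continuity of $(P,z)\mapsto\phi_P(z)$ on the escape locus makes the combinatorial map $P\mapsto\Theta(P)$ continuous on $\mathcal{S}_d^{\textup{vis}}$, so the sets
\[W_n:=\{P\in\mathcal{S}_d^{\textup{vis}}\, ;\ \Theta(P)\in U_n\}\]
are open in $\mathcal{P}_d$, and non-empty because the combinatorial map $\Theta:\mathcal{S}_d^{\textup{vis}}\to\mathsf{Cb}_0$ is surjective (Kiwi).

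The first substantial step is to bring these parameters down onto $\partial\mathcal{C}_d$ via the Branner--Hubbard \emph{wringing deformation}: to any $P\in\mathcal{S}_d^{\textup{vis}}$ it associates a holomorphic path $t\in(0,1]\mapsto P_t\in\mathcal{S}_d^{\textup{vis}}$ with $P_1=P$, $\Theta(P_t)\equiv\Theta(P)$, and $G(P_t)=t\cdot G(P)$. Wringing any $P\in W_n$ and letting $t\to 0$ produces parameters still in $W_n$ with escape rate tending to $0$; compactness of $\mathcal{C}_d$ (and of a neighbourhood of it in $\mathcal{P}_d$) forces an accumulation point in $\overline{W_n}\cap\mathcal{C}_d$, and such limits of elements of $\mathcal{S}_d$ lie in $\partial\mathcal{C}_d\cap\partial\mathcal{S}_d$. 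Hence
\[K_n:=\overline{W_n}\cap\mathcal{C}_d\]
is a non-empty compact subset of $\partial\mathcal{C}_d\cap\partial\mathcal{S}_d$, and directly from the definition of impression one has $\mathcal{I}_{\mathcal{C}_d}(\Theta)=\bigcap_n K_n$.

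The main obstacle is the \emph{connectedness} of $K_n$, which I would handle in two steps. First, $W_n$ is connected: for each fixed $\Theta_0\in U_n\cap\mathsf{Cb}_0$ the fibre $\{P\in\mathcal{S}_d^{\textup{vis}}\,;\ \Theta(P)=\Theta_0\}$ is a connected one-parameter family parametrised by $G(P)>0$ through the wringing flow (a consequence of Kiwi's trivialisation of the shift locus by its combinatorics), and $U_n\cap\mathsf{Cb}_0$ is connected, so varying $\Theta_0$ continuously keeps the total space $W_n$ connected, whence so is $\overline{W_n}$. Second, the wringing flow $t\mapsto P_t$ with $t\to 0$ realises a continuous deformation retraction of $\overline{W_n}$ onto $K_n$ which preserves membership in $\overline{U_n}$, so $K_n$ inherits the connectedness of $\overline{W_n}$. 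The nested intersection $\bigcap_n K_n$ of non-empty compact connected subsets of the metric space $\mathcal{P}_d$ is then automatically non-empty, compact and connected, which concludes the proof.
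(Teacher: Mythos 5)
This proposition is not proved in the paper at all: it is quoted from Kiwi \cite{kiwi-portrait}, so your attempt can only be measured against Kiwi's structure theory of the visible shift locus. Your global architecture (realize the impression as a decreasing intersection of compact connected sets, and use a stretching/wringing deformation to reach the connectedness locus, where limits of shift parameters automatically lie in $\partial\mathcal{C}_d\cap\partial\mathcal{S}_d$) is the right one, and the easy parts (non-emptiness of $K_n$ via properness of $G$, compactness, $K_n\subset\partial\mathcal{C}_d\cap\partial\mathcal{S}_d$, nested intersection) are fine. But the two steps carrying the real content of the statement --- connectedness --- are not correct as written. First, for $d\geq 3$ the fibre of the portrait map over a fixed $\Theta_0$ is \emph{not} ``a connected one-parameter family parametrised by $G(P)$ through the wringing flow'': wringing multiplies all the critical escape rates $g_P(c_i)$ by a common factor, whereas the fibre carries $d-1$ independent escape rates (this is precisely Kiwi's parametrisation of $\mathcal{S}_d^{\textup{vis}}$ by the portrait together with the escape vector, of which the Goldberg map $\Phi_g$ recalled in Section~\ref{sec:bifmes} is only the equal-rates slice). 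So wringing is not transitive on fibres and your proof that $W_n$ is connected collapses; note also that connected fibres over a connected base do not by themselves give a connected total space --- you would need, say, the continuous section $\Theta_0\mapsto\Phi_g(\Theta_0,r)$. Second, the claimed ``continuous deformation retraction of $\overline{W_n}$ onto $K_n$'' by letting $t\to0$ does not exist: wringing is continuous only on the shift locus and has no continuous extension to $t=0$; if wring rays landed continuously, every impression would be a point, which is contradicted by Example~\ref{tm:non-trivial} of this very paper. Hence the connectedness of $K_n$ is not established.

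Two further points need attention. The identity $\mathcal{I}_{\mathcal{C}_d}(\Theta)=\bigcap_n K_n$ is not ``direct from the definition'': from the definition as literally stated one only gets $\bigcap_n\overline{W_n}$, and intersecting with $\mathcal{C}_d$ silently uses the requirement $G(P_k)\to0$ which is implicit in Kiwi's notion of impression (with the paper's literal definition every visible $P$ would lie in the impression of its own portrait, and the proposition would be false); and the existence of a basis of connected neighbourhoods $U_n$ of $\Theta$ with $U_n\cap\mathsf{Cb}_0$ connected does not follow from Proposition~\ref{tm:muCb} (path-connectedness plus density give no local connectedness), so it must be extracted from the explicit local structure of $\mathsf{Cb}$. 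A repairable version of your scheme would be: invoke Kiwi's parametrisation to get that $W_n$ is connected, then show that $W_n\cap\{G<\epsilon\}$ is connected by wringing paths inside $W_n$ below the level $\epsilon$ (this only uses Branner--Hubbard continuity of wringing \emph{within} the shift locus and the fact that $G$ decreases along stretching), and realise the impression as $\bigcap_{n,\epsilon}\overline{W_n\cap\{G<\epsilon\}}$, a decreasing intersection of non-empty compact connected sets. As it stands, however, the proof has genuine gaps at exactly the points where the proposition is non-trivial.
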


According to Theorem~5.12 of \cite{kiwireal}, whenever $\J_P=\K_P$ is locally connected and $P$ has no irrationally neutral cycle, the map $P:\J_P\longrightarrow\J_P$ is conjugate to the maps induced by $z\mapsto z^d$ on a quotient $\mathbb{S}^1/\sim_P$ of $\mathbb{S}^1$ by a dynamically defined equivalence relation. Moreover, Theorem~1 of \cite{kiwi-portrait} guarantees that if $P,P'\in\mathcal{I}_{\mathcal{C}_d}(\Theta)$ have only repelling cycles and have locally connected Julia sets $\J_P=\K_P$ and $\J_{\tilde P}=\K_{\tilde P}$, the quotient spaces $\mathbb{S}^1/\sim_P$ and $\mathbb{S}^1/\sim_{P'}$ depend only on the combinatorics $\Theta$, and in particular are homeomorphic.

All this summarizes as follows.

\begin{theorem}[Kiwi]\label{tm:kiwi}
Let $\Theta\in\mathsf{Cb}_0$ and let $(P,c_0,\ldots,c_{d-2}),(\tilde P,\tilde c_0,\ldots,\tilde c_{d-2})\in\mathcal{I}_{\mathcal{C}_d}(\Theta)$. Assume that $\J_P=\K_P$ and $\J_{\tilde P}=\K_{\tilde P}$ are locally connected and that $P$ and $\tilde P$ have only repelling cycles. Then there exists an orientation preserving homeomorphism $h:\J_P\longrightarrow\J_{\tilde P}$ which conjugates $P$ to $\tilde P$ on their Julia sets.
\end{theorem}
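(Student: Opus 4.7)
The plan is to assemble the two results of Kiwi recalled in the paragraph preceding the statement. First I would apply Theorem~5.12 of~\cite{kiwireal} separately to $P$ and to $\tilde P$: since each has only repelling cycles (hence in particular no irrationally neutral cycle) and since $\J_P=\K_P$ and $\J_{\tilde P}=\K_{\tilde P}$ are locally connected, one obtains continuous surjections $\pi_P\colon\mathbb{S}^1\to\J_P$ and $\pi_{\tilde P}\colon\mathbb{S}^1\to\J_{\tilde P}$, each intertwining $z\mapsto z^d$ on $\mathbb{S}^1$ with $P$ (resp.\ $\tilde P$) on the Julia set, preserving cyclic order, and inducing homeomorphisms
\[
\bar\pi_P\colon \mathbb{S}^1/{\sim_P}\,\longrightarrow\,\J_P \and \bar\pi_{\tilde P}\colon \mathbb{S}^1/{\sim_{\tilde P}}\,\longrightarrow\,\J_{\tilde P}
\]
for the dynamically defined equivalence relations $\sim_P$ and $\sim_{\tilde P}$ on $\mathbb{S}^1$.

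Next I would invoke Theorem~1 of~\cite{kiwi-portrait}. Because $P$ and $\tilde P$ both lie in the impression $\mathcal{I}_{\mathcal{C}_d}(\Theta)$ of the same combinatorics $\Theta\in\mathsf{Cb}_0$, have $\J=\K$ locally connected, and have only repelling cycles, the two dynamical quotients $(\mathbb{S}^1/{\sim_P},\,z^d)$ and $(\mathbb{S}^1/{\sim_{\tilde P}},\,z^d)$ depend only on $\Theta$; more precisely, the identity of $\mathbb{S}^1$ descends to a canonical homeomorphism $\mathbb{S}^1/{\sim_P}\to\mathbb{S}^1/{\sim_{\tilde P}}$ conjugating the two induced dynamics.

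The required map is then simply the composite
\[
h\,:=\,\bar\pi_{\tilde P}\circ\bar\pi_P^{-1}\colon \J_P\,\longrightarrow\,\J_{\tilde P},
\]
which is a homeomorphism by construction. Commutation with $z\mapsto z^d$ at the level of $\mathbb{S}^1$ immediately yields $h\circ P=\tilde P\circ h$ on $\J_P$, and the orientation-preserving character of $h$ descends from the cyclic-order preservation of both $\pi_P$ and $\pi_{\tilde P}$ (which itself comes from the fact that they are Carath\'eodory limits of the B\"ottcher-parametrized external ray landings). The actual work is entirely hidden inside the two cited black boxes: the main obstacle in a from-scratch proof would be the identification of $\sim_P$ and $\sim_{\tilde P}$ with an equivalence relation depending only on $\Theta$, which requires the rigidity machinery of~\cite{kiwi-portrait} (and implicitly the Przytycki--Rohde results on Topological Collet--Eckmann repellers advertised earlier in the introduction). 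Here I simply quote those results.
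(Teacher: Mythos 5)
Your proposal is correct and follows essentially the same route as the paper, which presents this theorem precisely as the combination of Theorem~5.12 of~\cite{kiwireal} (conjugacy of $P|_{\J_P}$ to the induced map on $\mathbb{S}^1/\sim_P$) with Theorem~1 of~\cite{kiwi-portrait} (the quotient depends only on $\Theta$ for polynomials in $\mathcal{I}_{\mathcal{C}_d}(\Theta)$ with only repelling cycles and locally connected $\J=\K$), composed exactly as you do. One small correction to your closing aside: the Przytycki--Rohde rigidity results~\cite{PR} are not used here, even implicitly --- they enter only later, to upgrade the topological conjugacy furnished by this theorem to an affine one.
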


In the sequel, we will use this result in the following way: when the impression $\mathcal{I}_{\mathcal{C}_d}(\Theta)$ of $\Theta$ contains a polynomial which is topologically rigid, with locally connected Julia set and having only repelling cycles, then $\mathcal{I}_{\mathcal{C}_d}(\Theta)$ is reduced to the singleton $\{P\}$. We will be particularly interested in the case where $P$ is  Topological Collet-Eckmann.

~

Recall that a polynomial $P$ satisfies the \emph{topological Collet-Eckmann} (or TCE) condition if for some $A\geq1$
 there exist constants $M>1$ and $r>0$ such that for every $x\in \J_P$ there is an increasing sequence $(n_j)$ with $n_j\leq A\cdot j$ such that for every $j$,
\[ \textup{Card}\left\{i\, ; \ 0\leq i<n_j, \textup{Comp}_{f^i(x)}f^{-(n_j-i)}\D(f^{n_j}(x),r)\cap C(P)\neq \emptyset\right\}\leq M,\]
where $\textup{Comp}_x(X)$ is the connected component of the set $X$ containing $x$.  It is known that if $P$ is TCE, then $\J_P$ 
is locally connected, $C(P)\subset \J_P=\K_P$ and $P$ only has repelling cycles (see e.g. \cite[Main Theorem]{PRLS}).

\subsection{Measure theoretic tools}\label{sec:measuretheory}
 A classical result states that if $f:X\longrightarrow Y$ is a map between metric spaces, $\nu$ is a probability measure on $X$ 
and $\nu_n$ converges weakly to $\nu$ and if the set $D_f$ of discontinuities of $f$ satisfies $\nu(D_f)=0$, then $f_*(\nu_n)$ 
converges weakly to $f_*(\nu)$. This is known as the \emph{mapping theorem}. We prove the following slight generalization we will use in a crucial way.

\begin{theorem}\label{tm:measure}
Let $(X,d_X)$ and $(Y,d_Y)$ be metric spaces and let $f:X\longrightarrow Y$ be a measurable map. Let $\nu$ be a probability
 measure on $X$ such that there exists a Borel subset $S\subset X$ with $\nu(S)=1$ and such that $f|_S:S\longrightarrow Y$ is continuous. 
Pick any sequence $\nu_n$ of probability measures on $X$ with $\nu_n(S)=1$. Assume in addition that $\nu_n$ converges weakly to $\nu$ on $X$. 
Then $f_*(\nu_n)$ converges weakly to $f_*(\nu)$.
\end{theorem}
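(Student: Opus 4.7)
The plan is to prove the statement via the Portmanteau characterization of weak convergence: a sequence $(\mu_n)$ of Borel probability measures on a metric space $Y$ converges weakly to $\mu$ if and only if $\liminf_n \mu_n(V)\geq \mu(V)$ for every open set $V\subset Y$. This reformulation is essential here because one cannot directly test $f_\ast\nu_n$ against continuous bounded functions through $f$: since $f|_S$ is only continuous for the subspace topology on $S$, $f$ itself may be discontinuous at every point of $S$ viewed inside $X$, so $\varphi\circ f$ need not be continuous on $X$ for $\varphi\in C_b(Y)$, and weak convergence $\nu_n\to\nu$ on $X$ cannot be applied to $\varphi\circ f$ as a test function.

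Concretely, first I would fix an arbitrary open set $V\subset Y$. Since $f|_S:S\to Y$ is continuous for the subspace topology inherited from $X$, the preimage $(f|_S)^{-1}(V)=f^{-1}(V)\cap S$ is open in $S$, hence by the very definition of the subspace topology there exists an open set $U\subset X$ with
\[f^{-1}(V)\cap S=U\cap S.\]
Next I would use the full-mass hypothesis $\nu_n(S)=\nu(S)=1$ to rewrite
\[\nu_n(f^{-1}(V))=\nu_n(f^{-1}(V)\cap S)=\nu_n(U\cap S)=\nu_n(U),\]
and similarly $\nu(f^{-1}(V))=\nu(U)$. Finally, the weak convergence $\nu_n\to\nu$ on $X$ combined with Portmanteau applied to the open set $U\subset X$ gives $\liminf_n \nu_n(U)\geq \nu(U)$, which by the identifications above reads $\liminf_n (f_\ast\nu_n)(V)\geq (f_\ast\nu)(V)$. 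Applying Portmanteau in the converse direction on $Y$ concludes that $f_\ast\nu_n\to f_\ast\nu$ weakly.

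The only genuine subtlety—and what should be highlighted as the key step—is the passage from $f^{-1}(V)$ to an open subset $U\subset X$: it is precisely the continuity of $f|_S$ that allows this lift modulo $S$, and it is precisely the full-mass condition $\nu_n(S)=\nu(S)=1$ that makes the discrepancy between $f^{-1}(V)$ and $U$ (which can be wild, as $f$ may have arbitrary behaviour on $X\setminus S$ or even at points of $S$ seen inside $X$) invisible for every measure in play. Once these two ingredients are combined, the standard Portmanteau theorem does the rest, and no regularity on $f$ outside $S$ is needed.
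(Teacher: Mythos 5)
Your proof is correct. It is also essentially the same mechanism as the paper's: both arguments hinge on the Portmanteau characterization together with the observation that, because $f|_S$ is continuous for the subspace topology, preimages of open (resp.\ closed) sets of $Y$ agree with traces on $S$ of open (resp.\ closed) subsets of $X$, and that the full-mass conditions $\nu_n(S)=\nu(S)=1$ make the discrepancy outside $S$ invisible.

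The organizational difference is worth a remark. The paper restricts everything to $S$: it sets $g:=f|_S$, $\tilde\nu_n:=\nu_n|_S$, $\tilde\nu:=\nu|_S$, asserts that $\tilde\nu_n\to\tilde\nu$ weakly on the metric space $S$, and then applies the closed-set form of Portmanteau to $g$ there. You instead stay on $X$: for an open $V\subset Y$ you lift $f^{-1}(V)\cap S=U\cap S$ with $U$ open in $X$, identify $\nu_n(f^{-1}(V))=\nu_n(U)$ and $\nu(f^{-1}(V))=\nu(U)$ via the full-mass hypothesis, and apply the open-set Portmanteau inequality directly to $U$ in $X$. Your route is slightly more economical: the paper's intermediate claim that the restricted measures converge weakly on $S$ is stated without justification, and its proof is exactly the subspace-topology argument you carry out explicitly, so your version both shortens the proof and fills in the one step the paper glosses over. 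Your opening remark about why one cannot simply test against $\varphi\circ f$ for $\varphi\in C_b(Y)$ correctly identifies why some Portmanteau-type detour is needed in the first place.
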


We rely on the following classical fact (see e.g.~\cite[Theorem 2.1 p. 16]{billingsley}).

\begin{fact}
A sequence of probability measures $\mu_n$ on a metric space converges weakly to a probability measure $\mu$ if and only if $\limsup_{n\to\infty}\mu_n(F)\leq \mu(F)$ for any closed set $F$, or equivalently, if and only if $\liminf_{n\to\infty}\mu_n(U)\geq \mu(U)$ for any open set $U$.
\end{fact}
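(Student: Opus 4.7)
The plan is to establish the three conditions in a cycle: the definitional one (integrals of bounded continuous functions converge), the closed-set $\limsup$ bound, and the open-set $\liminf$ bound. The equivalence between the two inequalities is immediate and I would dispatch it first: if $F$ is closed and $U=X\setminus F$ is its open complement, then $\mu_n(F)=1-\mu_n(U)$ and $\mu(F)=1-\mu(U)$, so $\limsup\mu_n(F)\leq\mu(F)$ is the same relation as $\liminf\mu_n(U)\geq\mu(U)$. Hence it suffices to prove ``weak convergence $\Longleftrightarrow$ closed-set $\limsup$ bound''.

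For the forward direction, I would use a Urysohn-type approximation adapted to the metric $d$. Given a closed set $F$, set
\[
f_\epsilon(x):=\max\bigl(0,\,1-\tfrac{1}{\epsilon}d(x,F)\bigr), \qquad F_\epsilon:=\{x\,;\ d(x,F)<\epsilon\}.
\]
Each $f_\epsilon$ is bounded continuous and satisfies $\mathbf{1}_F\leq f_\epsilon\leq \mathbf{1}_{F_\epsilon}$. By weak convergence,
\[
\limsup_{n\to\infty}\mu_n(F)\ \leq\ \limsup_{n\to\infty}\int f_\epsilon\,d\mu_n\ =\ \int f_\epsilon\,d\mu\ \leq\ \mu(F_\epsilon).
\]
Since $F$ is closed, $\bigcap_{\epsilon>0}F_\epsilon=F$, and continuity of $\mu$ from above (the $F_\epsilon$ are decreasing in $\epsilon$ and all have finite measure) yields $\mu(F_\epsilon)\downarrow \mu(F)$, hence the desired bound.

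For the reverse direction, I would use the layer-cake representation together with Fatou's lemma on Lebesgue measure on $[0,M]$. Assume both $\limsup\mu_n(F)\leq\mu(F)$ for closed $F$ and the equivalent open-set bound. Let $f$ be bounded continuous; by adding a constant assume $0\leq f\leq M$. Then
\[
\int f\,d\mu_n=\int_0^M\mu_n(\{f>t\})\,dt, \qquad \int f\,d\mu=\int_0^M\mu(\{f>t\})\,dt.
\]
Each $\{f>t\}$ is open, so $\liminf_n\mu_n(\{f>t\})\geq\mu(\{f>t\})$; Fatou gives $\liminf\int f\,d\mu_n\geq \int f\,d\mu$. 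Applying the same argument to $M-f$ (using the closed sets $\{f\geq t\}$, which differ from $\{f>t\}$ only on a Lebesgue-null set of $t$'s, since $t\mapsto\mu(\{f\geq t\})$ has at most countably many discontinuities) yields $\limsup\int f\,d\mu_n\leq\int f\,d\mu$, and convergence follows.

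The main obstacle, and the only genuinely subtle step, is the passage from the $\epsilon$-fattened bound $\limsup\mu_n(F)\leq\mu(F_\epsilon)$ to the sharp bound $\mu(F)$: this requires both the closedness of $F$ (so that $\bigcap_\epsilon F_\epsilon=F$) and that $\mu$ is a probability measure (so continuity from above applies). Every other ingredient is routine: the Urysohn function $f_\epsilon$ is explicit in a metric space, and the Fatou/layer-cake combination is standard once the open-set inequality is available.
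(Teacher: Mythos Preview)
Your argument is correct and is precisely the standard proof of the Portmanteau theorem. Note, however, that the paper does not actually prove this Fact: it is stated as classical with a reference to \cite[Theorem~2.1 p.~16]{billingsley}, so there is no ``paper's own proof'' to compare against. Your Urysohn-function step for the forward direction and the layer-cake/Fatou step for the converse are exactly the ingredients one finds in Billingsley's treatment; the parenthetical about $\{f\geq t\}$ versus $\{f>t\}$ is slightly roundabout (simply applying the open-set inequality to $M-f$ gives $\limsup\int f\,d\mu_n\leq\int f\,d\mu$ directly), but the logic is sound.
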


\begin{proof}[Proof of Theorem~\ref{tm:measure}]
First, $f_*(\nu)$ is a probability measure on $Y$ and its restriction to $W:=f(S)$ is a probability measure since $\nu(S)=1$. Let $g:=f|_S:S\to W$ and $\tilde\nu_n:=\nu_n|_S$ and $\tilde \nu:=\nu|_S$. Notice that, by assumption, $g$ is a continuous map between metric spaces and $\left.f_*(\nu_n)\right|_W=g_*(\tilde\nu_n)$ and $\left.f_*(\nu)\right|_W=g_*(\tilde\nu)$. Moreover, $\tilde\nu_n$ and $\tilde \nu$ are probability measures on the metric space $S$ and $\tilde \nu_n$ converges weakly to $\tilde\nu$ on $S$.

Pick now any closed subset $B\subset Y$ and let $B':=W\cap B$. We have that $\nu_n(f^{-1}(B))=\tilde\nu_n(g^{-1}(B'))$ and $\nu(f^{-1}(B))=\tilde\nu(g^{-1}(B'))$, since $\nu_n(X\setminus S)=\nu(X\setminus S)=0$. Recall also that $B'$ is a closed subset of $W$ and $g^{-1}(B')$ is a closed subset of $S$. Hence, by the above Fact,
\begin{eqnarray*}
\limsup_{n\to\infty}f_*(\nu_n)(B) & = & \limsup_{n\to\infty}g_*(\tilde\nu_n)(B')=\limsup_{n\to\infty}\tilde\nu_n(g^{-1}(B'))\\
& \leq & \tilde\nu\left(g^{-1}(B')\right) =  \nu\left(f^{-1}(B)\right)=f_*(\nu)(B)~.
\end{eqnarray*}
Again by the Fact, this ends the proof.
\end{proof}

We say that a sequence $(A_n)_{n\geq0}$ of finite subsets of $\R/\Z$ is equidistributed if $\lim_{n\rightarrow\infty}\textup{Card}(A_n)=+\infty$ 
and if for any open interval $I\subset\R/\Z$ we have
\[\lim_{n\rightarrow+\infty}\frac{\textup{Card}(A_n\cap I)}{\textup{Card}(A_n)}=\lambda_{\R/\Z}(I).\]
We shall also use the following easy lemma.

\begin{lemma}\label{lm:equidsitrcircle}
Pick $(A_n)_{n\geq0}$ and $(B_n)_{n\geq0}$ two sequences of finite sets of $\R/\Z$. Assume that $(A_n)$ and $(B_n)$ are equidistributed, that $B_n\subset A_n$ and that 
\[\liminf_{n\rightarrow\infty}\frac{\textup{Card}(A_n\setminus B_n)}{\textup{Card}(A_n)}>0.\]
Then the sequence $(A_n\setminus B_n)$ is equidistributed. 
\end{lemma}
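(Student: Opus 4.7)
The plan is a direct computation: since $B_n\subset A_n$ one may linearise counts, and the equidistribution of $(A_n)$ and $(B_n)$ should force equidistribution of the difference once we know the complement $A_n\setminus B_n$ is not negligible inside $A_n$. First I would write, for any open interval $I\subset\R/\Z$,
\[\frac{\textup{Card}((A_n\setminus B_n)\cap I)}{\textup{Card}(A_n\setminus B_n)}= \frac{\textup{Card}(A_n\cap I)-\textup{Card}(B_n\cap I)}{\textup{Card}(A_n)-\textup{Card}(B_n)},\]
and divide numerator and denominator by $\textup{Card}(A_n)$. Setting $\rho_n:=\textup{Card}(B_n)/\textup{Card}(A_n)\in[0,1]$, the ratio becomes
\[\frac{\textup{Card}(A_n\cap I)/\textup{Card}(A_n)\;-\;\rho_n\cdot \textup{Card}(B_n\cap I)/\textup{Card}(B_n)}{1-\rho_n}.\]

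Next, the hypothesis $\liminf_n \textup{Card}(A_n\setminus B_n)/\textup{Card}(A_n)>0$ translates into $\limsup_n \rho_n<1$, so the denominator stays bounded away from $0$. I would then proceed by the usual subsequence trick: from any subsequence one can extract a further subsequence along which $\rho_n$ converges to some $\rho\in[0,1)$; along such a subsequence, the equidistribution hypotheses on $(A_n)$ and $(B_n)$ give that the numerator converges to $\lambda_{\R/\Z}(I)-\rho\,\lambda_{\R/\Z}(I)=(1-\rho)\lambda_{\R/\Z}(I)$ and the denominator to $1-\rho>0$, hence the ratio converges to $\lambda_{\R/\Z}(I)$. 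Since every subsequence admits a sub-subsequence with the same limit $\lambda_{\R/\Z}(I)$, the full sequence converges to $\lambda_{\R/\Z}(I)$.

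It remains to check the growth condition $\textup{Card}(A_n\setminus B_n)\to\infty$. This follows at once from $\textup{Card}(A_n\setminus B_n)=\textup{Card}(A_n)(1-\rho_n)\geq c\cdot \textup{Card}(A_n)$ for some $c>0$ and all $n$ large enough (by the liminf assumption), combined with $\textup{Card}(A_n)\to\infty$ coming from the equidistribution of $(A_n)$. There is essentially no real obstacle: the only mild point is that $\rho_n$ need not converge, which is why one has to handle the ratio through subsequences rather than directly passing to the limit; the quantitative lower bound on $1-\rho_n$ is exactly what makes this step work.
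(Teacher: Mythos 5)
Your proof is correct and follows essentially the same route as the paper: both exploit $B_n\subset A_n$ to write $\textup{Card}((A_n\setminus B_n)\cap I)$ as a difference of counts and use the $\liminf$ hypothesis to keep the denominator comparable to $\textup{Card}(A_n)$. The only difference is cosmetic — the paper finishes with a direct $\epsilon$-estimate yielding the explicit bound $\tfrac{4}{\alpha}\epsilon$, while you pass to subsequences of $\rho_n$; your extra check that $\textup{Card}(A_n\setminus B_n)\to\infty$ is a welcome detail the paper leaves implicit.
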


\begin{proof}
From the above fact, it is sufficient to check that for any open interval $I\subset\R/\Z$,  we have $\liminf_{n\rightarrow\infty}\mu_n(I)\geq \lambda_{\R/\Z}(I)$, since any open subset of $\R/\Z$ is a disjoint union of open intervals. Pick an open interval $I\subset \R/\Z$ and $\epsilon>0$. As $A_n$ and $B_n$ are equidistributed, there exists $n_0\geq1$ such that for any $n\geq n_0$, 
\[\left|\frac{\textup{Card}(A_n\cap I)}{\textup{Card}(A_n)}-\lambda_{\R/\Z}(I)\right|\leq \epsilon \ \text{and} \ \left|\frac{\textup{Card}(B_n\cap I)}{\textup{Card}(B_n)}-\lambda_{\R/\Z}(I)\right|\leq \epsilon.\]
Let $\alpha:=\liminf_n\textup{Card}(A_n\setminus B_n)/\textup{Card}(A_n)$. By assumption, we have $\alpha>0$ and up to increasing $n_0$, for any $n\geq n_0$ we may assume $\textup{Card}(A_n\setminus B_n)/\textup{Card}(A_n)\geq\alpha/2>0$. Hence
\begin{eqnarray*}
\left|\frac{\textup{Card}((A_n\setminus B_n)\cap I)}{\textup{Card}(A_n\setminus B_n)}-\lambda_{\R/\Z}(I)\right| & \leq & \frac{4}{\alpha}\epsilon.
\end{eqnarray*}
This concludes the proof.
\end{proof}

As a direct consequence, we see that the probability measure $\mu_n$ equidistributed on $A_n\setminus B_n$ converges weakly towards $\lambda_{\R/\Z}$.

\section{In the quadratic family}
This section serves as a model to the sequel: we develop our strategy in the family
\[p_c(z):=z^2+c~, \ (c,z)\in\C^2\]
which parametrizes the moduli space of quadratic polynomials.

In the present section, we prove a continuity property for the Riemann map of the complement of the Mandelbrot set and deduce Theorems~\ref{tm:distrib} and~\ref{tm:distribpara} in the present context from it and from known landing properties of rational angles (see e.g.~\cite{orsay1,orsay2,Schleicher}).

\subsection{Prime-End Impressions and Collet-Eckmann parameters}

All the material for this section is classical (see e.g.~\cite{orsay1,orsay2}). Recall that the bifurcation measure of the quadratic family $(p_c)_{c\in\C}$ is the harmonic measure $\mu_\Mand$ of the Mandelbrot set $\Mand:=\{c\in\C \, ; \ |p_c^n(0)|\leq2~, \forall n\geq0\}=\{c\in\C\, ; \ \J_c \text{ is connected}\}$. Moreover, the map
\[\Phi:\C\setminus\Mand\longrightarrow\C\setminus\overline{\D}\]
defined by $\Phi(c):=\phi_{p_c}(c)$ is a biholomorphism which is tangent to the identity at $\infty$. The \emph{external ray} of the Mandelbrot set of angle $\theta\in\R/\Z$ is the set
\[\mathcal{R}_\Mand(\theta):=\Phi^{-1}\left(\{Re^{2i\pi\theta}\, ; \ R>1\}\right).\]
The combinatorial space $\mathsf{Cb}$ is then $\mathsf{Cb}=\{\{\alpha,\alpha+\frac{1}{2}\}\, ; \ \alpha\in\R/\Z\}$. The impression of the combinatorics $\Theta=\{\alpha,\alpha+\frac{1}{2}\}$ can be described as the prime-end impression of the ray $\theta=2\alpha$ under the map $\Phi^{-1}$, i.e. as the set 
\[\mathcal{I}_{\Mand}(\Theta)=\bigcap_{\rho>1,\ \epsilon>0}\overline{\Phi^{-1}\left(\{Re^{2i\pi \tau}\, ; \ |\theta-\tau|<\epsilon, \ 1<R<\rho\}\right)}.\]
We say $\theta$ is \emph{Misiurewicz} if there exists $n>k\geq1$ such that $2^n\theta=2^k\theta$ and $2^{n-k}\theta\neq\theta$. Combining \cite[Lemma 4.1]{Schleicher} with \cite[Theorem 5.3]{kiwi-portrait}, we have the following.

\begin{proposition}\label{prop:kiwimand}
For any Misiurewicz angle $\theta$, the prime-end impression of $\theta$ is reduced to a singleton. Moreover, this singleton consists in a Misiurewicz parameter $c$. In particular, the ray $\mathcal{R}_\Mand(\theta)$ lands at $c$.

For any Misiurewicz parameter $c$, at least one ray lands at $c$ and the angles of the rays that land at $c$ are exactly the angles of the dynamical rays of $p_c$ that land at its critical value $c$.
\end{proposition}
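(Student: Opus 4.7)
The plan is to combine the classical Douady--Hubbard landing of rational parameter rays with the combinatorial rigidity furnished by Theorem~\ref{tm:kiwi}, treating the two halves of the statement separately.

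Pick a Misiurewicz angle $\theta$, so that $2^n\theta=2^k\theta$ with $n>k\geq 1$ and $2^{n-k}\theta\neq\theta$. The associated combinatorics is $\Theta=\{\theta/2,\theta/2+1/2\}\in\mathsf{Cb}$, whose angles are both preperiodic under doubling. The classical Douady--Hubbard theory (recorded as Schleicher's Lemma~4.1) gives that $\mathcal{R}_\Mand(\theta)$ lands at a single point $c_\theta\in\partial\Mand$; unwrapping the definition of the prime-end impression via the Riemann map $\Phi$ yields $c_\theta\in\mathcal{I}_\Mand(\Theta)$. Continuity of the B\"ottcher coordinate along the parameter ray at the landing point then forces the dynamical ray of angle $\theta$ for $p_{c_\theta}$ to land at the critical value $c_\theta$; iterating, the forward orbit of $c_\theta$ under $p_{c_\theta}$ is strictly preperiodic, so $c_\theta$ is a Misiurewicz parameter. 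In particular $p_{c_\theta}$ is Topological Collet--Eckmann with locally connected Julia set and only repelling cycles, so the hypotheses of Theorem~\ref{tm:kiwi} are in force.

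To reduce $\mathcal{I}_\Mand(\Theta)$ to the singleton $\{c_\theta\}$, take any $c'\in\mathcal{I}_\Mand(\Theta)$: by Proposition~\ref{prop:kiwi} we have $c'\in\partial\Mand\cap\partial\mathcal{S}_2$, and approximating $c'$ by shift-locus parameters with critical portraits tending to $\Theta$ forces the orbit portrait at the critical value of $p_{c'}$ to coincide with that of $p_{c_\theta}$. Theorem~\ref{tm:kiwi} then yields a topological conjugacy $\J_{p_{c'}}\simeq\J_{p_{c_\theta}}$ respecting the marked critical point, which in the quadratic family suffices to conclude $c'=c_\theta$. For the converse direction, if $c$ is Misiurewicz then $p_c$ is TCE and $\J_{p_c}$ is locally connected, so at least one dynamical ray of angle $\theta$ lands at the critical value $c$; such a $\theta$ must be strictly preperiodic under doubling since $c$ is strictly preperiodic under $p_c$, and by the first part $\mathcal{R}_\Mand(\theta)$ lands at a Misiurewicz parameter which, by an orbit-portrait comparison, is $c$ itself. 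Running the same argument backwards gives the announced bijection between angles of parameter rays landing at $c$ and angles of dynamical rays landing at the critical value of $p_c$, which is the content of Kiwi's Theorem~5.3 in the quadratic setting.

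The delicate step is the rigidity portion of the second paragraph: translating the purely combinatorial information carried by the impression of $\Theta$ into dynamical information strong enough to feed Theorem~\ref{tm:kiwi}, and then upgrading the topological conjugacy on Julia sets to an equality of parameters in $\Mand$. Everything else is a direct synthesis of standard external-ray techniques in the quadratic family.
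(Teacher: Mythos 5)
Your overall strategy (Douady--Hubbard/Schleicher landing of strictly preperiodic parameter rays, plus a Kiwi-type rigidity argument to kill the impression) is reasonable, and in fact mirrors the route the paper itself takes for Collet--Eckmann angles in Theorem~\ref{tm:contlandMand} and Theorem~\ref{tm:combcont}; but note that for this particular proposition the paper does not rerun that argument: it simply combines \cite[Lemma 4.1]{Schleicher} with Kiwi's result that the impression of a Misiurewicz combinatorics is a singleton (\cite[Theorem 5.3]{kiwi-portrait}, restated as Theorem~\ref{tm:kiwipcf}). Measured against either route, your write-up has two genuine gaps, both in the rigidity half. First, to apply Theorem~\ref{tm:kiwi} to $c_\theta$ and an arbitrary $c'\in\mathcal{I}_\Mand(\Theta)$ you must know that $\J_{c'}=\K_{c'}$ is locally connected and that $p_{c'}$ has only repelling cycles. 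The phrase ``approximating $c'$ by shift-locus parameters with critical portraits tending to $\Theta$ forces the orbit portrait at the critical value of $p_{c'}$ to coincide with that of $p_{c_\theta}$'' does not deliver this: a priori $c'$ could carry a parabolic or irrationally neutral cycle, or be infinitely renormalizable with non-locally-connected Julia set, and ruling this out is precisely the nontrivial content of Kiwi's lamination theorems (in the paper, \cite[Theorem 1]{kiwireal} together with the aperiodic-kneading/Strongly Recurrent discussion, or directly Theorem~\ref{tm:kiwipcf}). As written, this step silently assumes the statement being proved.

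Second, even once Theorem~\ref{tm:kiwi} gives an orientation-preserving conjugacy of $p_{c'}$ and $p_{c_\theta}$ on their Julia sets, the claim that this ``in the quadratic family suffices to conclude $c'=c_\theta$'' is not automatic: a topological conjugacy on Julia sets must be upgraded to an affine one by a rigidity theorem --- the paper uses \cite[Corollary C]{PR} for TCE maps in the proof of Theorem~\ref{tm:contlandMand}, and for Misiurewicz quadratics one may alternatively invoke Misiurewicz rigidity or \cite[Theorem III]{BFH}. You explicitly flag this as ``the delicate step'' but never carry it out, so the core of the argument is asserted rather than proved. The same issue recurs in your converse direction: the ``orbit-portrait comparison'' identifying the landing point of $\mathcal{R}_\Mand(\theta)$ with the given Misiurewicz parameter $c$ is exactly the uniqueness statement (a Misiurewicz quadratic is determined by its critical portrait) that is at stake, so as written that passage is circular. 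To repair the proof, either cite \cite[Theorem 5.3]{kiwi-portrait} (as the paper does), or supply the two missing inputs: the lamination argument giving local connectivity and absence of non-repelling cycles for every element of the impression, and the rigidity theorem upgrading the topological conjugacy to equality of parameters.
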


We also say that $\theta$ is parabolic if there exists $n\geq1$ such that $2^n\theta=\theta$. The following is classical (see \cite{orsay1,orsay2} or \cite{Schleicher}).

\begin{proposition}\label{comptage_about_para}
Pick any parameter $c$ for which $p_c$ admits a parabolic cycle. Either $c=1/4$, in which case exactly one external ray of $\Mand$ lands at $c$, or exactly two external rays of $\Mand$ land at $c$. Furthermore, the corresponding impressions are reduced to singletons.
\end{proposition}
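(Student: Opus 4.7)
The plan is to reduce the statement to the classical Douady--Hubbard analysis of $\partial\Mand$ near parabolic parameters, which I would invoke rather than reprove. The guiding principle is the bijective correspondence between parameter rays landing at the root of a hyperbolic component and dynamical rays landing at the corresponding parabolic cycle.

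First I would observe that if $p_c$ has a parabolic cycle then $c\in\partial\Mand$ (the multiplier condition forces $c$ to sit on the boundary of a hyperbolic component), and $c$ is the \emph{root} of that component. I would then recall the orbit-portrait description: if the parabolic cycle has exact period $n$ and multiplier $e^{2i\pi p/q}$, the parabolic periodic points are exactly the common landing points of a cycle of rational dynamical rays of $p_c$, and the characteristic angles (the two angles bounding the characteristic sector at the parabolic value) form a pair $\{\theta_-,\theta_+\}$ of rationals with odd denominator. By the standard parameter/dynamical correspondence (Douady--Hubbard, Schleicher), the external rays of $\Mand$ landing at $c$ are precisely $\mathcal{R}_\Mand(\theta_-)$ and $\mathcal{R}_\Mand(\theta_+)$, hence exactly two. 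In the degenerate case $c=1/4$, the multiplier is $1$ and the parabolic fixed point is the landing point of the single fixed dynamical ray $R_0$, so only the parameter ray $\mathcal{R}_\Mand(0)$ lands at $1/4$.

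For the impression statement I would use that $\Mand$ is locally connected at every parabolic parameter. This is classical: the local structure of $\Mand$ at the root of a hyperbolic component $H$ is the union of $\overline{H}$ with its limb (bounded by the two landing rays), and this gives an explicit locally connected neighbourhood basis at $c$ in $\partial\Mand$. Once local connectedness at a landing point of a landing ray is known, Carath\'eodory's prime-end theorem forces the prime-end impression of the corresponding angle to collapse to the singleton $\{c\}$; applied to each of the (one or two) angles produced in the previous step, this gives the second assertion.

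The main obstacle, if one insisted on a self-contained argument, is the precise counting in the non-cusp case, which rests on the full orbit portrait machinery together with the correspondence theorem between landing of parameter rays at $c$ and landing of dynamical rays at the parabolic cycle of $p_c$. Since the statement is invoked as classical and references to \cite{orsay1,orsay2,Schleicher} are given, I would not reprove these ingredients but rather cite them and simply assemble the count and the impression property as above.
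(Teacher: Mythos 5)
Your proposal is correct and takes essentially the same route as the paper, which offers no argument of its own and simply invokes the classical Douady--Hubbard/Schleicher theory (\cite{orsay1,orsay2,Schleicher}) that you assemble: the two characteristic parameter rays landing at each parabolic parameter, the single ray at the cusp $c=1/4$, and the triviality of the corresponding impressions. One minor caution: triviality of an impression is not a formal consequence of local connectivity of $\Mand$ at the single point $c$ via Carath\'eodory (whose theorem is global), so it is safer to quote directly the classical statement that the impressions (equivalently, the fibers) of rational parameter rays at parabolic parameters are singletons, as is done in \cite{Schleicher}.
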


We shall now give a short proof of the following toy-model for Theorem~\ref{tm:contlanding} (see Section~\ref{sec:cont} for a more detailed proof of this result).

\begin{theorem}\label{tm:contlandMand}
There exists a set $\mathsf{C}\subset\R/\Z$ of full Lebesgue measure such that
\begin{enumerate}
 \item the map $\Phi^{-1}:]1,+\infty[\times\R/\Z\longrightarrow\C\setminus\Mand$ extends continuously to $\{0\}\times\mathsf{C}$,
\item the set $\mathsf{C}$ contains the Misiurewicz and parabolic angles.
 \end{enumerate}
 \end{theorem}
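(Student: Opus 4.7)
My strategy is to define
\[\mathsf{C}:=\mathsf{C}_0\cup\mathsf{M}\cup\mathsf{P},\]
where $\mathsf{C}_0:=\{\theta\in\R/\Z\, ;\ \mathcal{I}_\Mand(\theta)\text{ is reduced to a single point}\}$, and $\mathsf{M},\mathsf{P}$ denote the (countable) sets of Misiurewicz and parabolic angles. For $\theta\in\mathsf{C}$, let $\ell(\theta)$ be the unique element of $\mathcal{I}_\Mand(\theta)$, and extend $\Phi^{-1}$ to the boundary $\{1\}\times\mathsf{C}$ by setting $\Phi^{-1}(1,\theta):=\ell(\theta)$. The inclusion $\mathsf{M}\cup\mathsf{P}\subset\mathsf{C}$, i.e.\ point~(2) of the theorem, is then immediate from Propositions~\ref{prop:kiwimand} and~\ref{comptage_about_para}.

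The full-measure statement for $\mathsf{C}_0$ is a classical fact of prime-end theory. Because $\Phi$ is tangent to the identity at infinity, the function $z\mapsto z\Phi^{-1}(1/z)$ extends to a bounded holomorphic function on $\D$, so by Fatou's theorem $\Phi^{-1}$ admits a radial limit at almost every $\theta\in\R/\Z$. The upgrade from existence of a radial limit to triviality of the impression is McMillan's sector/twist theorem: at a.e.\ angle, a Riemann map onto a bounded simply connected domain either has an angular derivative or is a twist point, and in both cases the prime-end impression reduces to the non-tangential limit (cf.\ Pommerenke, \emph{Boundary Behaviour of Conformal Maps}, Chapter~6). Hence $\mathsf{C}_0$, and therefore $\mathsf{C}$, has full Lebesgue measure.

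For the continuity of the extension, nothing more than the definition of $\mathcal{I}_\Mand$ is needed. Given $\theta_0\in\mathsf{C}$ and $\delta>0$, the compactness of the nested closures in
\[\mathcal{I}_\Mand(\theta_0)=\bigcap_{\rho>1,\,\epsilon>0}\overline{\Phi^{-1}\bigl(\{Re^{2i\pi\tau}\, ;\ |\theta_0-\tau|<\epsilon,\ 1<R<\rho\}\bigr)}=\{\ell(\theta_0)\}\]
furnishes $\epsilon,\rho>0$ such that the closure appearing in the intersection is contained in the disk $\D(\ell(\theta_0),\delta)$. This immediately gives continuity from the exterior. For continuity along the boundary, I use the inclusion
\[\ell(\tau)\in\mathcal{I}_\Mand(\tau)\subset\overline{\Phi^{-1}\bigl(\{Re^{2i\pi\sigma}\, ;\ |\theta_0-\sigma|<\epsilon,\ 1<R<\rho\}\bigr)}\subset\D(\ell(\theta_0),\delta),\]
valid for any $\tau\in\mathsf{C}$ with $|\tau-\theta_0|<\epsilon/2$.

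The main obstacle is the full-measure statement for $\mathsf{C}_0$. An alternative strategy more in line with the higher-degree part of the paper (and which avoids any appeal to general prime-end theory) is to invoke that Collet-Eckmann parameters are of full $\mu_\Mand$-measure in $\partial\Mand$: a CE parameter is TCE, hence by the main result of~\cite{PR} its Julia set is locally connected and it has only repelling cycles, so that Theorem~\ref{tm:kiwi} applies and forces the corresponding impression to be a singleton.
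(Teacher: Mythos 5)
The decomposition of $\mathsf{C}$, the treatment of item (2) via Propositions~\ref{prop:kiwimand} and~\ref{comptage_about_para}, and the continuity argument from nested impressions are all fine, but the heart of the theorem is precisely the full-measure statement for $\mathsf{C}_0$, and there your argument has a genuine gap: it is \emph{not} a classical fact of prime-end theory that the impression is a singleton at almost every angle. Fatou's theorem gives radial limits almost everywhere, and McMillan's theorem gives the sector/twist dichotomy almost everywhere, but neither alternative forces the prime-end impression to degenerate to the radial limit: the impression involves unrestricted (in particular tangential) approach, and it can be a nondegenerate continuum at a point where the radial limit, or even the angular derivative, exists. In fact there exist simply connected domains (for instance those whose boundary is an indecomposable continuum) for which the impression of \emph{every} prime end is the whole boundary, while Fatou and McMillan of course still apply; so the set of angles with trivial impression can even be empty. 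Thus ``$\mathsf{C}_0$ has full measure'' cannot be obtained by soft prime-end theory; it genuinely requires dynamical input, and indeed if it were soft one would get, for free, statements about $\partial\Mand$ that are only known through the Graczyk--\'Swi\k{a}tek/Smirnov theory.

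Your closing ``alternative strategy'' is the correct route (and is the paper's proof), but as sketched it is incomplete. What is needed is Smirnov's theorem about \emph{angles}: the set of Collet--Eckmann angles has full Lebesgue measure and the corresponding ray lands at a CE (hence TCE) parameter $c_0$ lying in the impression. Knowing that $c_0$ has locally connected Julia set and only repelling cycles is not yet enough to invoke Theorem~\ref{tm:kiwi}: one must first show that \emph{every} parameter $c$ in the impression of $\Theta=\{\theta,\theta+\frac12\}$ has $\J_c=\K_c$ locally connected with only repelling cycles, which follows from \cite[Theorem 1]{kiwi-portrait} together with \cite[Theorem 1]{kiwireal} and the combinatorial characterization of the strongly recurrent condition in \cite[\S 2]{Smirnov}. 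Theorem~\ref{tm:kiwi} then only yields a topological conjugacy of $p_c$ and $p_{c_0}$ on their Julia sets; the key additional step is the rigidity result \cite[Corollary C]{PR}, which upgrades this to affine conjugacy, hence $c=c_0$, and finally the connectedness of the impression (Proposition~\ref{prop:kiwi}) reduces it to a singleton. Your sketch omits both the verification of the hypotheses for the other parameters of the impression and the rigidity step, which are exactly where the work lies.
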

 
 \begin{proof}
 From \cite{Smirnov}, we know that \emph{Collet-Eckmann} angles have full Lebesgue measure and that their impression contains the limit of the corresponding ray which is a Collet-Eckmann parameter. Pick such a $\theta$, let $\Theta:=\{\theta,\theta+\frac{1}{2}\}$ and let $c_0$ be a Collet-Eckmann parameter contained in $\mathcal{I}_\Mand(\Theta)$. According to \cite[Theorem 1]{kiwireal} and \cite[\S 2]{Smirnov}, any parameter $c$ in $\mathcal{I}_\Mand(\Theta)$ has locally connected Julia set $\J_c=\K_c$ and all its cycles are repelling. By Theorem~\ref{tm:kiwi}, this implies that $p_c$ and $p_{c_0}$ are topologically conjugate on their Julia sets. By~\cite[Corollary C]{PR}, they are affine conjugate, hence $c=c_0$. Since $\mathcal{I}_\Mand(\Theta)$ is connected, it is reduced to a singleton hence $\Phi^{-1}$ extends continuously to $\{(0,\theta)\}$. 
 
 Item $2$ follows directly from the two above Propositions.
 \end{proof}

\subsection{Distribution of Misiurewicz and Parabolic parameters}
For any integers $n>k>1$, we let
\[\mathsf{C}(n,k):=\{\theta\in\R/\Z\, ; \ 2^{n-1}\theta=2^{k-1}\theta \ \text{ and } 2^{n-k}\theta\neq\theta\}\]
and we let $d(n,k):=\textup{Card}(\mathsf{C}(n,k))=2^{n-1}-2^{k-1}-2^{n-k}+1$. For any $n\geq1$, let also
\[\mathsf{P}(n):=\{\theta\in\R/\Z\, ; \ 2^{n}\theta=\theta\}~.\]
We now aim at proving the following, using Theorem~\ref{tm:contlandMand}.

\begin{theorem}\label{tm:distribparMand}
For any integer, let $\mu_n$ be the measure equidistributed on the set $X_n$ of roots of hyperbolic components of period $k|n$. Then $\mu_n$ converges to $\mu_\Mand$ in the weak sense of measures on $\C$ as $n\to\infty$.
\end{theorem}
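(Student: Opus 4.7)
The plan is to transport the problem to the circle $\R/\Z$ via the landing map provided by Theorem~\ref{tm:contlandMand}, since the roots of hyperbolic components of period dividing $n$ are precisely the landing points of rays of periodic angle dividing $n$ under the doubling map. First, I would use that the harmonic measure $\mu_\Mand$ is by construction the pushforward of $\lambda_{\R/\Z}$ under the boundary values of $\Phi^{-1}$, i.e.\ under the landing map $\ell:\mathsf{C}\longrightarrow\partial\Mand$ coming from Theorem~\ref{tm:contlandMand}, so that the target of the theorem is $\ell_*(\lambda_{\R/\Z})$.

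The next step is to introduce the uniform probability measure $\nu_n$ on the set
\[\mathsf{P}(n)=\left\{\tfrac{k}{2^n-1}\, ;\ 0\leq k\leq 2^n-2\right\},\]
which is equidistributed for $\lambda_{\R/\Z}$ simply because its atoms are equally spaced. Every angle in $\mathsf{P}(n)$ is parabolic, hence lies in $\mathsf{C}$ by Theorem~\ref{tm:contlandMand}(2). Applying Theorem~\ref{tm:measure} to the map $\ell$ with $S=\mathsf{C}$ would then yield
\[\ell_*(\nu_n)\ \longrightarrow\ \ell_*(\lambda_{\R/\Z})\ =\ \mu_\Mand\]
weakly on $\C$.

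It then only remains to convert $\ell_*(\nu_n)$ into $\mu_n$. Combining Proposition~\ref{comptage_about_para} with the standard Douady-Hubbard correspondence between periodic parameter rays and roots of hyperbolic components, I would observe that $X_n=\ell(\mathsf{P}(n))$, that $\theta=0$ is the unique preimage of $c=1/4$, and that every other parameter in $X_n$ has exactly two preimages in $\mathsf{P}(n)$. A quick count then gives $\textup{Card}(X_n)=2^{n-1}$ and
\[\mu_n\ =\ \frac{\textup{Card}(\mathsf{P}(n))}{2\,\textup{Card}(X_n)}\,\ell_*(\nu_n)\ +\ \frac{1}{2\,\textup{Card}(X_n)}\,\delta_{1/4}\ =\ \left(1-\tfrac{1}{2^n}\right)\ell_*(\nu_n)\ +\ \tfrac{1}{2^n}\,\delta_{1/4},\]
so $\mu_n$ differs from $\ell_*(\nu_n)$ by a coefficient tending to $1$ and a vanishing atom at $1/4$, and the desired convergence $\mu_n\to\mu_\Mand$ follows.

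The substantial content is concentrated in Theorem~\ref{tm:contlandMand}: the continuous extension of the Riemann map at parabolic angles rests on the real analytic input (Smirnov's full-measure Collet-Eckmann angles, the Przytycki-Rohde rigidity, and Kiwi's combinatorial classification). Given that tool and the landing statements of Proposition~\ref{comptage_about_para}, the present argument is just an equidistribution of $(2^n-1)$-th roots of unity on the circle, pushed forward through $\ell$, plus trivial bookkeeping of the landing multiplicities. The only non-routine verification is that every angle in $\mathsf{P}(n)$ genuinely lands on $\Mand$, which is exactly the content of Theorem~\ref{tm:contlandMand}(2) and enables the use of Theorem~\ref{tm:measure}.
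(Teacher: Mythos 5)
Your proposal is correct and follows essentially the same route as the paper: push the equidistributed periodic angles $\mathsf{P}(n)$ through the landing map using Theorem~\ref{tm:contlandMand} and Theorem~\ref{tm:measure}, then use Proposition~\ref{comptage_about_para} to count preimages ($\textup{Card}(X_n)=2^{n-1}$, the lone angle at $c=1/4$) and absorb the resulting multiplicity-two weighting and the vanishing atom. Your bookkeeping identity $\mu_n=(1-2^{-n})\ell_*(\nu_n)+2^{-n}\delta_{1/4}$ is just a repackaging of the paper's comparison of $\ell_*(\rho_n)$ with $\mu_n$, so there is nothing further to add.
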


\begin{proof}
 Let $\ell:\R/\Z\longrightarrow\Mand$ be the landing map of rays, i.e. the radial limit almost everywhere of the map $\Phi^{-1}$. It is known that it is a well-defined measurable map which satisfies $\mu_\Mand=\ell_*(\lambda_{\R/\Z})$ (see e.g.~\cite{graczykswiatek}). By the above Theorem~\ref{tm:contlandMand}, it restricts as a continuous function on a set of full measure which contains the set $\mathsf{P}(n)$ for any $n$. 

It is clear that the sequence $\{\mathsf{P}(n)\}_n$ is equidistributed. Let $\rho_n$ be the probability measure equidistributed on $\mathsf{P}(n)$. According to Theorem~\ref{tm:measure}, the above implies that
\[\ell_*(\rho_n)=\frac{1}{\textup{Card}(\mathsf{P}(n))}\sum_{\ell(\mathsf{P}(n))}\mathcal{N}_\Mand(c)\cdot \delta_c\]
converges weakly to $\ell_*(\lambda_{\R/\Z})=\mu_\Mand$, where $\mathcal{N}_\Mand(c)$ is the number of external rays of $\Mand$ that land at $c$. 
Remark now that $\textup{Card}(\mathsf{P}(n))=2^n-1$. Using Proposition~\ref{comptage_about_para}, we deduce that $\textup{Card}(X_n)=2^{n-1}$ and  
\[\ell_*(\rho_n)-\mu_n=\frac{1}{2^n-1}\mu_n-\frac{1}{2^{n-1}}\delta_{1/4}\]
converges weakly to $0$. This concludes the proof.
\end{proof}

Remark that, for any $\lambda\in\C$, it is known that the set of parameters $c\in\C$ for which $p_c$ admits a $n$-cycle of multiplier $\lambda$ equidistribute towards $\mu_\Mand$ by~\cite{multipliers}.

The same proof as above gives the following.

\begin{theorem}\label{tm:distribmisMand}
Pick any sequence $1<k(n)<n$ and let $d_n:=d(n,k(n))$. Let also $\nu_n$
\[\nu_n:=\frac{1}{d_n}\sum_{\ell(\mathsf{C}(n,k(n)))}\mathcal{N}_\Mand(c)\cdot \delta_c~,\]
where $\mathcal{N}_\Mand(c)$ is the number of external rays of $\Mand$ that land at $c$. Then $\nu_n$ converges to $\mu_\Mand$ in the weak sense of measures on $\C$ as $n\to\infty$.
\end{theorem}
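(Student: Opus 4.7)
The plan closely mirrors the proof of Theorem~\ref{tm:distribparMand}, with the parabolic combinatorics $\mathsf{P}(n)$ replaced by the strictly pre-periodic combinatorics $\mathsf{C}(n,k(n))$. Let $\rho_n$ denote the uniform probability measure on $\mathsf{C}(n,k(n))$. I will show that $\rho_n \to \lambda_{\R/\Z}$ weakly, push this convergence forward by the landing map $\ell$ using Theorem~\ref{tm:measure}, and finally identify $\ell_*(\rho_n)$ with $\nu_n$.

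For the first step I would apply Lemma~\ref{lm:equidsitrcircle} to the evenly spaced sets
\[A_n := \{\theta \in \R/\Z \, ; \ 2^{n-1}\theta = 2^{k(n)-1}\theta\} \and B_n := \{\theta \in \R/\Z \, ; \ 2^{n-k(n)}\theta = \theta\},\]
noting that $B_n \subset A_n$, that $\mathsf{C}(n,k(n)) = A_n \setminus B_n$, and that both $A_n$ and $B_n$ are equidistributed (being evenly spaced, with spacing tending to $0$). Since $k(n) \geq 2$, one computes
\[\frac{\textup{Card}(A_n\setminus B_n)}{\textup{Card}(A_n)} = \frac{d_n}{2^{n-1}-2^{k(n)-1}} = 1 - \frac{1}{2^{k(n)-1}} \geq \frac{1}{2}~,\]
so the required $\liminf$ is positive and $\rho_n \to \lambda_{\R/\Z}$ weakly.

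Next, by Theorem~\ref{tm:contlandMand}, there exists a full Lebesgue measure set $\mathsf{C}_\infty \subset \R/\Z$ on which $\ell$ is continuous and which contains every Misiurewicz angle, so $\rho_n(\mathsf{C}_\infty) = 1$ for all $n$. Theorem~\ref{tm:measure} applied to $f = \ell$, $S = \mathsf{C}_\infty$, $\nu_n = \rho_n$, $\nu = \lambda_{\R/\Z}$ then gives $\ell_*(\rho_n) \to \ell_*(\lambda_{\R/\Z}) = \mu_\Mand$. Grouping Dirac masses yields
\[\ell_*(\rho_n) = \frac{1}{d_n} \sum_{c \in \ell(\mathsf{C}(n,k(n)))} \mathcal{N}_n(c) \cdot \delta_c \with \mathcal{N}_n(c) := \textup{Card}\{\theta \in \mathsf{C}(n,k(n)) \, ; \ \ell(\theta) = c\}.\]

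The main obstacle is the final identification $\mathcal{N}_n(c) = \mathcal{N}_\Mand(c)$, i.e. verifying that no correction term of the type $\delta_{1/4}$ encountered in Theorem~\ref{tm:distribparMand} appears. By Proposition~\ref{prop:kiwimand}, each $c \in \ell(\mathsf{C}(n,k(n)))$ is a Misiurewicz parameter, and the external rays of $\Mand$ landing at $c$ have exactly the angles of the dynamical rays of $p_c$ landing at its critical value. Any $\theta \in \mathsf{C}(n,k(n))$ mapping to $c$ endows the critical value with an exact pre-period $k_0 \in \{1,\ldots,k(n)-1\}$ and an exact period $m_0$ dividing $n-k(n)$. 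Since the angle of a dynamical ray landing at a repelling strictly pre-periodic point inherits its exact pre-period and period, any other angle $\theta'$ landing at $c$ in the dynamical plane satisfies $2^{k_0+m_0}\theta' = 2^{k_0}\theta'$ and is strictly pre-periodic; these two conditions combined force $\theta' \in \mathsf{C}(n,k(n))$. Hence $\mathcal{N}_n(c) = \mathcal{N}_\Mand(c)$ and $\nu_n = \ell_*(\rho_n) \to \mu_\Mand$, completing the proof.
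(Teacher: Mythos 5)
Your route is the one the paper itself takes: Theorem~\ref{tm:distribmisMand} is stated there with the remark that ``the same proof'' as for Theorem~\ref{tm:distribparMand} applies, and your three steps (equidistribution of the combinatorial sets via Lemma~\ref{lm:equidsitrcircle}, continuity of $\ell$ on a full-measure set containing the Misiurewicz angles via Theorem~\ref{tm:contlandMand}, pushforward via Theorem~\ref{tm:measure}) reproduce that scheme faithfully; you are also right that the only point demanding extra care is the identification $\mathcal{N}_n(c)=\mathcal{N}_\Mand(c)$. One minor caveat: $B_n$ is equidistributed only when $n-k(n)\to\infty$. If $n-k(n)$ stays bounded along a subsequence, then $\textup{Card}(B_n)=2^{n-k(n)}-1$ is bounded and Lemma~\ref{lm:equidsitrcircle} does not apply; of course $A_n\setminus B_n$ is then still equidistributed, trivially, since only a bounded number of points is removed, but you should make this dichotomy explicit (it is exactly the case distinction the paper performs in the analogous Tricorn computation, Theorem~\ref{tm:distribantiangle}).

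The one genuine flaw is in the justification of the final step. You assert that an angle whose dynamical ray lands at a repelling strictly preperiodic point ``inherits its exact pre-period and period''. The preperiod part is correct, but the period part is false in general: when $s\geq 2$ rays land at each point of the repelling cycle and the first return map permutes them transitively, each such angle has exact period $s$ times the exact period of the point. For instance, at a Misiurewicz parameter whose critical value eventually maps onto the $\alpha$-fixed point of a map in the $1/3$-limb (e.g.\ the landing point of the ray of angle $9/56$), the eventual periodic point has period $1$ while every angle landing at it has period $3$; so, with $m_0$ the period of the point, the relation $2^{k_0+m_0}\theta'=2^{k_0}\theta'$ you write down can fail. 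What saves the argument, and what you should invoke instead, is Lemma~\ref{lm:Sch}: at each point of the cycle the landing rays are either permuted transitively or are exactly two fixed rays, so \emph{all} angles landing at a given point of the cycle share one and the same exact period; since the orbit segment from the critical value $c$ to the cycle contains no critical point, pulling back shows that all angles landing at $c$ have the same exact preperiod and the same exact eventual period as $\theta$ itself. Hence they all belong to $\mathsf{C}(n,k(n))$ as soon as $\theta$ does, by Proposition~\ref{prop:kiwimand} these are exactly the parameter angles landing at $c$, and $\mathcal{N}_n(c)=\mathcal{N}_\Mand(c)$ follows. With these two repairs your proof is complete and coincides with the paper's.
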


\part{In the moduli space of polynomials}\label{Part:modd}

\section{Misiurewicz and Parabolic combinatorics}\label{sec:combinPd}

We define the map $M_d:\R/\Z\longrightarrow\R/\Z$ by letting
\[M_d(\theta)=d\cdot \theta \mod1.\]
We say that a combinatorics $\Theta=(\Theta_0,\ldots,\Theta_{d-2})\in\mathsf{Cb}$ is \emph{Misiurewicz} if any $\alpha\in\bigcup_i\Theta_i$ is strictly preperiodic under the map $M_d:\R/\Z\rightarrow\R/\Z$. We denote by $\mathsf{Cb}_{\textup{mis}}$ the set of all Misiurewicz combinatorics.

Similarly, we will say that a combinatorics $\Theta=(\Theta_0,\ldots,\Theta_{d-2})\in\mathsf{Cb}$ is \emph{parabolic} if for all $j$, there exists $\theta_j\in\Theta_j$ which is periodic for $M_d$. We also define the set $\mathsf{Cb}_{\textup{par}}$ as follows:
\[\mathsf{Cb}_{\textup{par}}:=\{\Theta=(\Theta_0,\ldots,\Theta_{d-2})\in\mathsf{Cb} \ \text{which is parabolic}\}.\]
Notice that if $\theta_j$ is $n_j$-periodic and $\gcd(n_i,n_j)=1$, then $\Theta\in\mathsf{Cb}_0$.

\subsection{Misiurewicz combinatorics: counting coinciding impressions}

We will use the following (see~\cite[Theorem 5.3]{kiwi-portrait}).

\begin{theorem}[Kiwi]\label{tm:kiwipcf}
The impression of a Misiurewicz combinatorics is reduced to a singleton and corresponds to the only degree $d$ critically marked Misiurewicz polynomial with the chosen combinatorics.
\end{theorem}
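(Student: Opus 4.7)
The plan is to adapt the quadratic argument behind Theorem \ref{tm:contlandMand} to arbitrary degree $d$, with Thurston's rigidity of postcritically finite polynomials playing the role of the Przytycki--Rohde rigidity used there.

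First, I would exhibit a single postcritically finite representative of $\Theta$ inside the impression. By Proposition \ref{prop:kiwi}, $\mathcal{I}_{\mathcal{C}_d}(\Theta)$ is non-empty, so one can choose $P_n \in \mathcal{S}_d^{\textup{vis}}$ with $\Theta(P_n) \to \Theta$ and $P_n \to P_0 \in \mathcal{I}_{\mathcal{C}_d}(\Theta)$. Every $\theta \in \bigcup_i \Theta_i$ is strictly preperiodic under $M_d$ and hence rational; for such rational angles, the $P_n$-dynamical ray terminates at the critical point $c_i^{(n)}$ and the orbit of $c_i^{(n)}$ eventually lands on a repelling cycle whose combinatorial portrait is dictated by $\Theta$. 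Passing to the limit (dynamical rays of strictly preperiodic rational angles land at repelling preperiodic points, and these landings depend continuously on the parameter since parabolic parameters form a codimension-$\geq 1$ set for each fixed rational angle), the rays of $P_0$ of angles in $\Theta_i$ all land at a single critical point $c_i$ of $P_0$, and the strict preperiodicity of the angles forces $P_0^{m_i}(c_i) = P_0^{n_i}(c_i)$ with $c_i$ not periodic. Thus $P_0$ is a critically marked Misiurewicz polynomial with $\Theta(P_0) = \Theta$.

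Next, I would show that every $P \in \mathcal{I}_{\mathcal{C}_d}(\Theta)$ enjoys the same rigidity. Using Kiwi's landing theorem for preperiodic rays \cite{kiwi-portrait}, the Misiurewicz nature of $\Theta$ forces every $P$ in the impression to have locally connected Julia set $\J_P = \K_P$ with only repelling cycles, and moreover to be itself a Misiurewicz polynomial with portrait $\Theta$. Theorem \ref{tm:kiwi} then furnishes an orientation-preserving homeomorphism $h : \J_{P_0} \to \J_P$ conjugating $P_0$ to $P$ on their Julia sets and matching the marked critical points. Since both polynomials are postcritically finite without bounded Fatou components (no cycle is non-repelling), $h$ extends via the B\"ottcher coordinates on the basins of infinity to a Thurston equivalence between $P_0$ and $P$.

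Finally, Thurston's rigidity theorem for postcritically finite rational maps (polynomials are never of Latt\`es type) upgrades the Thurston equivalence to an affine conjugacy, so $\{P\} = \{P_0\}$ in $\mathcal{P}_d$ and the impression is the singleton $\{P_0\}$. The main obstacle is the middle step, namely ensuring that every $P \in \mathcal{I}_{\mathcal{C}_d}(\Theta)$ is automatically Misiurewicz with portrait $\Theta$: this is the technical heart of Kiwi's portrait theorem and requires a delicate analysis of the persistence of periodic and preperiodic ray portraits along one-parameter families landing in the impression. Once this step is granted, the combinatorial rigidity of Theorem \ref{tm:kiwi} and classical Thurston rigidity close the argument directly.
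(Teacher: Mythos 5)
There is a genuine gap, and it is the one you yourself flag at the end. First, a point of comparison: the paper does not prove Theorem~\ref{tm:kiwipcf} at all; it is quoted directly from \cite[Theorem 5.3]{kiwi-portrait} (with the realization statement going back to \cite{BFH}). So the only way your proposal could add value is as a self-contained argument, and it is not one: the ``middle step'' --- that \emph{every} $P\in\mathcal{I}_{\mathcal{C}_d}(\Theta)$ is Misiurewicz with portrait $\Theta$, has $\J_P=\K_P$ locally connected and only repelling cycles --- is not a lemma you can grant and then close the argument; it is essentially the statement being proved, i.e.\ the content of Kiwi's theorem itself. Writing ``once this step is granted'' makes the proof circular at its core. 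That this step cannot be soft is shown by Example~\ref{tm:non-trivial} in this very paper: for portraits of periodic type the impression can contain a nontrivial arc of parabolic polynomials, so any proof must exploit the strict preperiodicity of the angles in an essential way, which your outline never does.

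The first step also does not work as written. The approximating maps $P_n$ lie in the visible shift locus, so \emph{all} their critical points escape; the rays with angles in $\Theta_i$ terminate at the escaping critical point $c_i^{(n)}$, and the assertion that ``the orbit of $c_i^{(n)}$ eventually lands on a repelling cycle'' is false there. Passing to the limit is exactly the delicate point: landing of a preperiodic ray is stable only while the associated (pre)periodic point stays repelling and the ray avoids precritical points, and neither can be ruled out a priori at $P_0$; moreover you must control every limit point of the impression, not one convergent subsequence. Two smaller issues: Theorem~\ref{tm:kiwi} is stated for $\Theta\in\mathsf{Cb}_0$, whereas a Misiurewicz portrait may lie in $\mathsf{Cb}\setminus\mathsf{Cb}_0$ (multiple critical points), and the homeomorphism it provides is not asserted to match the marked critical points. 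Finally, even granting the middle step, your endgame via extension of $h$ to a Thurston equivalence needs justification (extension across the basin of infinity and the isotopy rel the postcritical set are not automatic); the paper's own machinery is more economical here, since Misiurewicz polynomials are TCE, so the topological conjugacy on Julia sets plus \cite[Corollary C]{PR} already yields affine conjugacy, finiteness of the impression, and then triviality by connectedness (Proposition~\ref{prop:kiwi}), exactly as in the proof of Theorem~\ref{tm:combcont}.
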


As noticed by Dujardin and Favre \cite[Theorem 7.18]{favredujardin}, this induces a bijection between $\mathsf{Cb}_{\textup{mis}}$ and the set of Misiurewicz parameters in the moduli space of \emph{combinatorially} marked degree $d$ polynomials (see also~\cite[Theorem III]{BFH}).

~

We now want to describe how many Misiurewicz combinatorics can have the same impression in $\mathcal{P}_d$. 
To this aim, for any $0\leq i\leq d-2$ and any $0\leq n<m$, we let
\[\mathsf{C}_i(m,n):=\{\Theta\in\mathsf{Cb}\, ; \  \Theta_i=\{\alpha_1,\ldots,\alpha_{k_i}\} ,  \ d^m\alpha_j=d^n\alpha_j\, \ \forall j\}~.\]
Relying on a result of Schleicher~\cite{Schleicher}, we can prove

\begin{proposition}\label{prop:Schleicher}
Pick any two $(d-1)$-tuples of positive integers $(n_{0},\ldots,n_{d-2})$ and $(m_{0},\ldots,m_{d-2})$ such that $m_i>n_i$. Let also $(P,c_0,\ldots,c_{d-2})\in\mathcal{P}_d$ be such that $P^{n_i}(c_i)=P^{m_i}(c_i)$, $P^{m_i-n_i}(c_i)\neq c_i$ and $P^{n_i}(c_i)$ is exactly $(m_i-n_i)$-periodic. Set
\[\mathcal{N}_{\mathsf{Cb}}(P):=\textup{Card}\left(\left\{\Theta\in\mathsf{Cb}_{\textup{mis}}\, ; \ \{(P,c_0,\ldots,c_{d-2})\}=\mathcal{I}_{\mathcal{C}_d}(\Theta)\right\}\right).\]
Then $\mathcal{N}_{\mathsf{Cb}}(P)$ is finite. More precisely, if $\Theta=(\Theta_0,\ldots,\Theta_{d-2})$ and $q_i$ is the exact period of the cycle contained in the orbit $\{M_d^k(\Theta_i)\}_{k\geq1}$, then $(m_i-n_i)|q_i$ and
\[\prod\deg_{P(c_i)}(P^{n_i-1})\cdot\left(\frac{q_i}{m_i-n_i}\right)\leq \mathcal{N}_{\mathsf{Cb}}(P)\leq\prod\deg_{P(c_i)}(P^{n_i-1})\cdot\max\left(2,\frac{q_i}{m_i-n_i}\right),\]
where the product ranges over the set of geometrically distinct critical points of $P$. 
\end{proposition}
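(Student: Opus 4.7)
The starting point is Kiwi's rigidity result (Theorem~\ref{tm:kiwipcf}): each Misiurewicz combinatorics has impression a single Misiurewicz polynomial, and the assignment $\Theta \mapsto (P_\Theta,\text{critical marking})$ is a bijection between $\mathsf{Cb}_{\textup{mis}}$ and combinatorially marked Misiurewicz parameters. Consequently, $\mathcal{N}_{\mathsf{Cb}}(P)$ equals the number of critical portraits of $P$, i.e.\ the number of ways to assign to each geometrically distinct critical point $c_i$ of $P$ (of local degree $\ell_i$) a set $\Theta_i$ of $\ell_i$ angles of external rays landing at $c_i$ such that the resulting tuple lies in $\mathsf{Cb}$. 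By the local structure of $P$ near $c_i$ (a branched cover of degree $\ell_i$), the $\ell_i$ rays landing at $c_i$ in any $\Theta_i$ all project under $M_d$ to a single angle $\alpha_i$ which is the angle of some ray landing at $P(c_i)$; conversely, each ray at $P(c_i)$ lifts to exactly $\ell_i$ local rays at $c_i$, yielding a unique $\Theta_i$. Thus the count at $c_i$ equals the cardinality of the set of rays of $P$ landing at $P(c_i)$.

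The next step is to relate rays at $P(c_i)$ to rays at the periodic point $z_i:=P^{n_i}(c_i)$ by pulling back along the orbit. The map $P^{n_i-1}$ sends a neighborhood of $P(c_i)$ to a neighborhood of $z_i$ as a local branched cover of degree $e_i:=\deg_{P(c_i)}(P^{n_i-1})$, whose ramification is concentrated exactly at the critical points visited by the transient part of the orbit of $c_i$. Each ray landing at $z_i$ therefore lifts, through the specific branch of $P^{-(n_i-1)}$ sending $z_i$ to $P(c_i)$, to exactly $e_i$ rays landing at $P(c_i)$, and every ray at $P(c_i)$ arises this way since its forward image under $P^{n_i-1}$ lands at a point on the cycle of $z_i$ and hence, by $P^{m_i-n_i}$-invariance of the set of rays landing at $z_i$, lies over $z_i$. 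This gives the identity \[\#\{\text{rays at }P(c_i)\}=e_i\cdot R_i\] where $R_i$ is the number of rays landing at $z_i$.

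To compute $R_i$, I invoke the Douady--Hubbard theorem for rays landing at a repelling periodic point, which is applicable because Misiurewicz implies that all cycles of $P$ are repelling: the map $P^{m_i-n_i}$ acts transitively (as a cyclic rotation in the cyclic order inherited from $\R/\Z$) on the rays landing at $z_i$. Consequently these $R_i$ rays form a single cycle of length $R_i$ under $P^{m_i-n_i}$, so each ray has period exactly $(m_i-n_i)R_i$ under $M_d$, which is by definition $q_i$. This yields the divisibility $(m_i-n_i)\mid q_i$ together with $R_i=q_i/(m_i-n_i)$. Combining with the previous step gives, at each geometrically distinct critical point $c_i$, exactly $e_i\cdot q_i/(m_i-n_i)$ candidate sets $\Theta_i$. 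Since the angles of rays landing at distinct points of $\J_P$ are automatically disjoint and, by planarity of the external ray portrait, unlinked, any product of such local choices assembles into a valid combinatorics in $\mathsf{Cb}$; multiplying over the geometrically distinct critical points yields the lower bound $\prod e_i\cdot q_i/(m_i-n_i)\leq \mathcal{N}_{\mathsf{Cb}}(P)$, and in fact equality in the regime where $q_i/(m_i-n_i)\geq 2$ for all $i$.

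The main subtlety, and the only place where a loss appears in the upper bound, is the degenerate regime $q_i=m_i-n_i$: in this case a single ray lands at $z_i$, and the corresponding rotation number analysis of Schleicher (cf.\ the landing structure used in Proposition~\ref{prop:kiwimand}) shows that the choice of $\Theta_i$ associated to $c_i$ may be duplicated by a symmetric configuration (an analogue of the two-ray phenomenon at parabolic landing points), inflating the count by at most a factor of $2$. Taking $\max(2,q_i/(m_i-n_i))$ in each factor subsumes both regimes and produces the upper bound. The hard part will therefore be Step~3 (Douady--Hubbard transitivity at repelling cycles, which we cite) and the case analysis in the degenerate regime $q_i=m_i-n_i$ providing the factor $2$; the remaining local-degree bookkeeping and compatibility checks in $\mathsf{Cb}$ are straightforward.
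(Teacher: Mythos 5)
Your overall route is the same as the paper's: you reduce $\mathcal{N}_{\mathsf{Cb}}(P)$, via Kiwi's rigidity (Theorem~\ref{tm:kiwipcf}) and the correspondence between Misiurewicz portraits and ray configurations of $P$, to counting the rays landing at each $P(c_i)$; you obtain the factor $\deg_{P(c_i)}(P^{n_i-1})$ by pulling rays back along the transient part of the critical orbit; and you count the rays at the periodic point $z_i=P^{n_i}(c_i)$ through the landing structure at repelling periodic points. This is exactly the paper's argument, whose last step is supplied by Lemma~\ref{lm:Sch} (Schleicher's Lemma~2.4).

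The problem is in your Step~3. The assertion that the first return map $P^{m_i-n_i}$ \emph{always} acts transitively on the rays landing at $z_i$ is false for $d\geq 3$: the correct statement (Lemma~\ref{lm:Sch}) is that transitivity holds whenever at least three periodic rays land, but when exactly two periodic rays land the return map may fix each of them. In that exceptional configuration both angles have exact period $q_i=m_i-n_i$ under $M_d$ while $R_i=2$, and this is precisely what the factor $\max\left(2,\frac{q_i}{m_i-n_i}\right)$ accounts for. Taken literally, your transitivity claim gives $R_i=q_i/(m_i-n_i)$ unconditionally, hence an upper bound without the $\max$, contradicting the very statement you are proving; and your subsequent patch misidentifies the degenerate regime: it is not that ``a single ray lands at $z_i$'' whose associated $\Theta_i$ gets ``duplicated by a symmetric configuration'', nor an analogue of the two-ray phenomenon at parabolic parameters --- it is the genuine dynamical-plane possibility (absent in the quadratic case, which may be what misled you) of two rays, each invariant under the first return map, landing at the same repelling point. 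Once you replace your unconditional transitivity claim by the two-case statement of Lemma~\ref{lm:Sch}, the rest of your bookkeeping (including equality of the count when $q_i/(m_i-n_i)\geq2$, which does follow since three or more landing rays must be permuted transitively) goes through and coincides with the paper's proof.
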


The proof of \cite[Lemma 2.4]{Schleicher} directly gives the next lemma. Notice that the periods and preperiods don't depend on the critical portraits, i.e. for any $\Theta,\Theta'$ which impression coincide, the periods and preperiods of $\Theta_i$ and $\Theta_i'$ coincide.

\begin{lemma}\label{lm:Sch}
Let $P$ be any degre $d\geq2$ polynomial. Let $z$ be a repelling or parabolic periodic point of $P$ of exact period $k\geq1$. At least one dynamical ray lands at $z$ and:
\begin{enumerate}
\item If at least three periodic rays land at $z$, then the first return map $P^k$ permutes transitively those dynamical rays,
\item If exactly two periodic rays land at $z$, then either the first return map $P^k$ permutes transitively those dynamical rays, or it fixes each of them.
\end{enumerate}
Moreover, the number of landing rays is constant along the forward orbit of $z$.
\end{lemma}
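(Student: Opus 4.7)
The plan is to run the classical Goldberg--Milnor analysis of periodic external rays landing at a repelling or parabolic periodic point, following the exposition in \cite[Lemma~2.4]{Schleicher}.

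First I would establish the existence of at least one periodic dynamical ray landing at $z$, via the Douady--Hubbard--Yoccoz snail lemma: in the repelling case one works in a linearising coordinate at $z$ and pulls back under $P^k$ a short arc transverse to $\nabla g_P$ inside some external ray; in the parabolic case one uses Fatou coordinates on an attracting petal of $z$ adjacent to the basin of infinity. The resulting ray is automatically periodic since only finitely many external rays of bounded period exist; the same argument shows that the set $R(z)$ of rays landing at $z$ is finite.

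Next, because $z$ is repelling or parabolic it is not a critical point, so $P^k$ is an orientation-preserving local homeomorphism near $z$, and it permutes $R(z)$ while preserving the cyclic order inherited from the angles in $\R/\Z$. Any cyclic-order-preserving permutation of a finite cyclically ordered set is a rotation, so the action of $P^k$ on $R(z)$ has a well-defined combinatorial rotation number $p/q\in\Q/\Z$ with $\gcd(p,q)=1$, and every $P^k$-orbit on $R(z)$ has length $q$. The constancy of the cardinality of $R(P^j z)$ along the cycle then follows at once, since $P$ is a local homeomorphism at each point of the orbit of $z$ and so induces bijections between successive sets of landing rays.

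The main obstacle is the refinement contained in items (1) and (2). To handle it, I would, following Schleicher, analyse the sectors at $z$ cut out by the rays of $R(z)$. Suppose for contradiction either that $R(z)$ has at least three elements split into two distinct $P^k$-orbits, or that $R(z)$ has exactly two elements with neither a swap nor a joint fixation. Then one can find a sector $S$ based at $z$, bounded by rays in the same $P^k$-orbit, containing no ray of $R(z)$ in its interior, and on which an appropriate iterate of $P^k$ acts as an orientation-preserving local homeomorphism fixing $z$. The resulting near-$z$ first-return dynamics in $\overline{S}$ would have to be either attracting or create an additional parabolic petal at $z$, both incompatible with the multiplier of $z$ under $P^k$ being already accounted for by the rotation number $p/q$. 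This sector analysis rules out the excluded configurations in (1) and (2) and completes the proof.
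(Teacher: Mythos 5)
The uncontroversial parts of your plan (existence of a landing periodic ray by the snail-lemma argument, finiteness and periodicity of the set $R(z)$ of rays landing at $z$, the fact that the first return map acts on $R(z)$ as a cyclic-order-preserving permutation, i.e.\ a combinatorial rotation, and the constancy of $\textup{Card}\,R(P^jz)$ along the orbit) are standard and fine; note also that the paper offers no argument of its own here, it only asserts that the proof of Schleicher's Lemma~2.4 -- written in the quadratic, unicritical setting -- ``directly gives'' the statement, so there is no detailed proof to match yours against. Note too that item (2) needs no sector analysis at all: once $P^k$ acts as a rotation of a two-element set, it is either the identity or the swap.

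The genuine gap is your treatment of item (1). The contradiction you invoke is purely local: you claim that an invariant sector at $z$, bounded by rays of one $P^k$-orbit and containing no other ray of $R(z)$, would force attracting behaviour or an extra parabolic petal at $z$, ``incompatible with the multiplier''. Nothing of the sort follows from the germ of $P^k$ at $z$: at a repelling point every invariant sector simply carries repelling local dynamics, and no contradiction arises. The actual argument behind the dichotomy (Goldberg--Milnor, Milnor's orbit-portrait paper, Schleicher) is global: one compares angular widths of sectors under multiplication by $d$ and counts the sectors that must contain the critical value, and this count uses unicriticality in an essential way. In the stated generality the transitivity claim cannot be reached by your route at all: take $P(z)=z^4+az$ with real $a>1$ close to $1$, so that $P(\omega z)=\omega P(z)$ for $\omega=e^{2i\pi/3}$. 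All three critical orbits stay bounded (the real critical orbit is trapped in an invariant interval and converges to the attracting fixed point $-(a-1)^{1/3}$, the other two are its rotates), so $\J_P$ is connected; the positive real axis escapes, hence the fixed ray $R_0$ lands at the repelling fixed point $0$, and by the $\omega$-symmetry so do $R_{1/3}$ and $R_{2/3}$. Thus three periodic rays land at $z=0$ and each is fixed by the first return map, so the action is not transitive. Consequently the decisive step of your sketch cannot be completed as written: any correct proof of (1) must input global information about the critical points (unicriticality, or hypotheses excluding several cycles of rays at one point), which is precisely what the quoted quadratic argument supplies and what is missing both from your plan and from the paper's one-line justification of the degree-$d$ statement.
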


\begin{proof}[Proof of Proposition~\ref{prop:Schleicher}]
By Theorem~\ref{tm:kiwipcf}, if $\alpha$ lies in the orbit under iteration of $M_d$of $\Theta_i$, then the point $z$ at which it lands lies in the orbit under iteration of $P$ of $c_i$. In particular, if $R_\alpha$ is the dynamical ray of angle $\alpha$ of $P$, then $P^{q_i}(R_\alpha)=R_\alpha$, hence $P^{q_i}(z)=z$, i.e. $(m_i-n_i)|q_i$.

Up to reordering, write now $c_0,\ldots,c_k$ the number of geometrically distinct critical points of $P$, $d_0,\ldots,d_k\geq2$ the local degree of $P$ at $c_0,\ldots,c_k$ respectively. As long as $P^k(c_i)$ is not a critical point, a ray landing at $P^{k+1}(c_i)$ has one and only one preimage under $P$ which lands at $P^k(c_i)$. On the other hand, if $P^k(c_i)=c_j$ for some $j\neq i$, then any ray landing at $P^{k+1}(c_i)$ has exactly $d_j$ preimages landing at $P^k(c_i)=c_j$. As a conclusion, the number $N_i$ of rays landing at $P(c_i)$ is exaclty $\deg_{P(c_i)}(P^{n_i-1})$ times the number of rays landing at $P^{n_i}(c_i)$, which satisfies
\[\deg_{P(c_i)}(P^{n_i-1})\cdot\left(\frac{q_i}{m_i-n_i}\right)\leq N_i \leq\deg_{P(c_i)}(P^{n_i-1})\cdot\max\left(2,\frac{q_i}{m_i-n_i}\right).\]
Finally, each ray landing at $P(c_i)$ has exactly $d_i$ preimages. For any $i$, pick $\theta_i$ landing at $P(c_i)$ and let $\Theta_i$ be the set of angles whose ray lands at $c_i$ and $M_d(\alpha)=\theta_i$ for any $\alpha\in\Theta_i$. Then $\Theta:=(\Theta_0,\ldots,\Theta_{d-2})$ (with repetitions if critical points are multiple) is a critical portrait for $P$ and we can associate to each collection $(\theta_0,\ldots,\theta_{d-2})$ of angles landing respectively at $P(c_i)$ one and only one critical portrait for $P$. The conclusion then follows from Lemma~\ref{lm:Sch}.
\end{proof}

\subsection{Parabolic combinatorics: a landing property}
We need the following definition.

\begin{definition}
Let $P$ be a degree $d$ polynomial. We say that a parabolic periodic point $z$ of $P$ is $n$-\emph{degenerate} if it has period $k|n$ and if $n$ is minimal so that $(P^n)'(z)=1$.
\end{definition}

The aim of the present section is to prove the following result.

\begin{theorem}\label{tm:rational}
Pick $\un=(n_0,\ldots,n_{d-2})$ with $\gcd(n_i,n_j)=1$ for $i\neq j$. Let $\Theta=(\Theta_0,\ldots,\Theta_{d-2})\in\mathsf{Cb}_{\textup{par}}$ be a portrait such that for any $j$, there exists $\theta_j\in\Theta_j$ which is exactly $n_j$-periodic for $M_d$. Then $\mathcal{I}_{\mathcal{C}_d}(\Theta)$ consists in a single critically marked polynomial $(P,c_0,\ldots,c_{d-2})$ having $d-1$ distinct parabolic periodic cycles which are respectively $n_j$-\emph{degenerate}. Moreover, $\theta_j$ lands at a parabolic point of period $k|n_j$.
\end{theorem}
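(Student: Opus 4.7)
The plan is to combine Douady--Hubbard's rational-ray landing theorem (in the form to be developed in Part~I) with a rigidity argument for polynomials carrying a maximal number of parabolic cycles.

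First, fix any $P\in\mathcal{I}_{\mathcal{C}_d}(\Theta)$ and approximate it by $P_k\in\mathcal{S}_d^{\textup{vis}}$ with $\Theta(P_k)\to\Theta$. Since $\theta_j$ is $M_d$-periodic of exact period $n_j$, the classical Douady--Hubbard landing theorem guarantees that the dynamical ray $R_{\theta_j}$ of $P$ lands at a periodic point $z_j$ of period $k_j\mid n_j$ which is either repelling or parabolic with $(P^{n_j})'(z_j)=1$. In the approximating shift-locus parameters, $R_{\theta_j}$ terminates at the critical point $c_j(P_k)$; this constraint passes to the limit, forcing $c_j$ to be attached combinatorially to the cycle of $z_j$. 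A repelling landing point would force the combinatorics of $\Theta$ to be Misiurewicz rather than parabolic, contradicting that each $\theta_j$ is purely periodic; hence $z_j$ is parabolic and $c_j$ lies in its parabolic basin. The minimality of $n_j$ with $(P^{n_j})'(z_j)=1$ (i.e.\ $n_j$-degeneracy) follows from $\theta_j$ having exact period $n_j$.

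Second, the coprimality assumption forces the $d-1$ parabolic cycles to be pairwise distinct: if the cycles of $z_i$ and $z_j$ coincided, their common period $k$ would divide both $n_i$ and $n_j$ and thus equal $1$, but then the unlinked structure of $\Theta_i,\Theta_j$ together with the distinctness of $\theta_i$ and $\theta_j$ would be violated. Fatou's bound of at most $d-1$ non-repelling cycles in degree $d$ then implies equality: $P$ has exactly $d-1$ distinct parabolic cycles, each attracting a unique critical point $c_j$ through its $n_j$-degenerate structure.

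Finally, for uniqueness, any such $P$ lies on the boundary of the hyperbolic component $H\subset\mathcal{P}_d$ in which each $c_j$ is captured by its own attracting cycle of period $k_j$, and $P$ is the landing point of an internal ray of $H$ whose multiplier along the $j$-th factor approaches $1$ through the direction $\exp(2\pi i n_j/k_j)$. The combinatorics $\Theta$ determine both $H$ and this internal approach ray; combined with the connectedness of $\mathcal{I}_{\mathcal{C}_d}(\Theta)$ from Proposition~\ref{prop:kiwi} and the discreteness of parabolic boundary points sharing the same approach data, this yields $\mathcal{I}_{\mathcal{C}_d}(\Theta)=\{(P,c_0,\ldots,c_{d-2})\}$. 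I expect this last rigidity step to be the main obstacle: while the parabolic landing of rays and the existence of $d-1$ parabolic cycles follow in spirit from classical tools, singling out a unique polynomial from the combinatorial data requires a careful Douady--Hubbard pinching or quasiconformal surgery argument, formalized through the parabolic landing theory announced in the introduction.
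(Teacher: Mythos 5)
Your first step contains a genuine gap: the exclusion of repelling landing points. The assertion that ``a repelling landing point would force the combinatorics of $\Theta$ to be Misiurewicz'' is not correct: being Misiurewicz is a property of the angles (strict preperiodicity under $M_d$), and a periodic angle can perfectly well land at a repelling periodic point in the dynamical plane of a polynomial of $\mathcal{I}_{\mathcal{C}_d}(\Theta)$ --- this is precisely the case that must be ruled out, and the periodicity of $\theta_j$ alone does not do it. The phrase ``this constraint passes to the limit'' hides the whole argument. What is needed is the stability of landing at repelling points: if $z_i$ were repelling, the ray of angle $\theta_i$ would land at a holomorphically moving point $z_i(Q)$ for all $Q$ in a neighborhood $W$ of $P$, with the point of that ray at Green level $s$ depending continuously on $(Q,s)$ (Douady--Hubbard); since the visible shift locus is dense in the shift locus and $P\in\mathcal{I}_{\mathcal{C}_d}(\Theta)$, one can choose $Q_n\in\mathcal{S}_d^{\textup{vis}}\cap W$ with $Q_n\to P$, $\Theta(Q_n)\to\Theta$ and $g_{Q_n}(c_i(Q_n))=s_n\to0$; then $c_i(Q_n)$ is the point of the ray of angle $\theta_i$ at level $s_n$, so in the limit $c_i=z_i$, impossible for a repelling periodic point. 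Without this (or an equivalent) argument the parabolicity of the $z_j$ is unproved. (Your coprimality argument for distinctness of the cycles is the right idea; note that the cleanest way to exclude a common landing point is that all rays landing at a given repelling or parabolic periodic point have the same exact period by Lemma~\ref{lm:Sch}.)

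The second genuine gap is the uniqueness step, which you yourself flag as the main obstacle but do not carry out: ``discreteness of parabolic boundary points sharing the same approach data'' is essentially the statement to be proved, and the claim that $P$ is the landing point of a prescribed internal ray of a hyperbolic component capturing all critical points would itself require a perturbation or surgery argument you do not supply. No pinching or quasiconformal surgery is needed. Once every $z_j$ is parabolic with $(P^{n_j})'(z_j)=1$, every element of $\mathcal{I}_{\mathcal{C}_d}(\Theta)$ lies in the algebraic set $\bigcap_j\Per_{n_j}(1)$; by the Fatou--Shishikura inequality together with the compactness of the connectedness locus and the hypothesis $\gcd(n_i,n_j)=1$, this set is finite (Lemma~\ref{lm:finite}), and since the impression is a non-empty connected compact set (Proposition~\ref{prop:kiwi}), it must be a single point. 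This elementary finiteness-plus-connectedness argument is the missing idea that replaces your entire third paragraph.
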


For our proof, we deeply rely on the seminal work \cite{orsay1} of Douady and Hubbard. Moreover, we follow closely the proof of \cite[Expos\'e VIII Th\'eor\`eme 2]{orsay1}.  Let us first make some preliminaries.

Recall the following (see~\cite[p. 225]{Silverman}, \cite[Appendix D]{milnor3} or \cite[Theorem 2.1]{BB2}):
\begin{theorem}[Milnor, Silverman]\label{tmpern}
For any $n\geq1$, there exists a polynomial map $p_n:\mathcal{P}_d\times \C\to\C$ such that for any $(P,c_0,\ldots,c_{d-2})\in\mathcal{P}_d$ and any $w\in\C$,
\begin{enumerate}
\item  if $w\neq1$, then $p_n(P,w)=0$ if and only if $P$ has a cycle of exact period $n$ and multiplier $w$,
\item otherwise, $p_n(P,1)=0$ if and only if there exists $q\geq1$ such that $P$ has a cycle of exact period $n/q$ and multiplier $\eta$ a primitive $q$-root of unity.
\end{enumerate}
\end{theorem}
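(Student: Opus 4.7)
The plan is to build $p_n$ as a resultant of the dynatomic polynomial with a multiplier polynomial, which is the standard Milnor--Silverman construction. Working first on the universal affine cover of $\mathcal{P}_d$, so that the coefficients of $P$ are genuine polynomial functions of parameters, I introduce the dynatomic polynomial
\[ \Phi_n(P, z) := \prod_{k \mid n} \bigl(P^k(z) - z\bigr)^{\mu(n/k)}, \]
where $\mu$ denotes the M\"obius function. Combining the identity $P^n(z) - z = \prod_{k \mid n} \Phi_k(P, z)$ with an induction on $n$ shows that $\Phi_n(P, z)$ is a genuine polynomial in $(P, z)$. The central combinatorial input is the \emph{formal periodic point} description: a point $z_0$ is a root of $\Phi_n(P, \cdot)$ iff either $z_0$ has exact period $n$ under $P$, or $z_0$ has exact period $m$ properly dividing $n$ and multiplier $(P^m)'(z_0)$ equal to a primitive $(n/m)$-th root of unity.

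Next I set
\[ p_n(P, w) := \mathrm{Res}_z\bigl( \Phi_n(P, z),\ (P^n)'(z) - w \bigr). \]
Since the leading coefficient of $\Phi_n$ as a polynomial in $z$ is a nonzero constant (easily read off from the leading coefficient of $P^n(z)-z$), this resultant is a polynomial in the remaining variables $(P,w)$. The entire construction is equivariant under affine conjugation acting on $P$, so $p_n$ descends to a regular function on $\mathcal{P}_d \times \C$.

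For $w \neq 1$, I verify (1) directly: $p_n(P,w)=0$ iff $\Phi_n(P,\cdot)$ and $(P^n)'(z)-w$ share a root $z_0$. By the formal periodic point description, either $z_0$ has exact period $n$ (giving (1)), or $z_0$ lies on a cycle of period $m<n$ with multiplier $\eta$ a primitive $(n/m)$-th root of unity; the latter forces $(P^n)'(z_0) = \eta^{n/m} = 1 \neq w$, ruling out that case. Conversely, any exact $n$-cycle of multiplier $w$ provides such a common root, so (1) follows.

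The main obstacle is the case $w=1$, where the formal periodic points are precisely what must be enumerated. A common root of $\Phi_n(P,\cdot)$ and $(P^n)'(z)-1$ is either a point of exact period $n$ with multiplier $1$ (case $q=1$) or a point of exact period $m = n/q$, $q>1$, with multiplier $\eta$ a primitive $q$-th root of unity, and conversely every such cycle contributes. The delicate point is the primitivity claim: one must show that if $z_0$ lies on a cycle of exact period $m < n$ and $\Phi_n(P, z_0) = 0$, then the multiplier $\eta = (P^m)'(z_0)$ has \emph{exact} order $n/m$, not a proper divisor. This is exactly where the M\"obius inversion built into $\Phi_n$ plays its role, since a non-primitive root of unity would make $z_0$ a zero of $\Phi_{n'}$ for some $m < n' < n$ instead; making this precise by a term-by-term analysis of the valuations of $P^k(z)-z$ at $z_0$ is the technical heart of the argument and constitutes the bulk of the work.
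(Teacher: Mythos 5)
The paper itself does not prove this statement: it is imported as a classical result of Milnor and Silverman (with references to Silverman's book, Milnor's appendix on multiplier polynomials, and Bassanelli--Berteloot), so there is no internal argument to compare against. Your construction --- the dynatomic polynomial $\Phi_n(P,z)=\prod_{k\mid n}(P^k(z)-z)^{\mu(n/k)}$, paired by a resultant in $z$ with $(P^n)'(z)-w$, carried out on an affine parametrization and descended to $\mathcal{P}_d\times\C$ by invariance under the finite group action --- is exactly the standard construction underlying those references, and your case analysis is sound: for $w\neq 1$ a common root of lower exact period is excluded because its $(P^n)'$-value equals $\eta^{n/m}=1$, and for $w=1$ the common roots are precisely the cycles described in item (2).

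However, as a self-contained proof the write-up has a genuine gap, which you partly acknowledge. First, the claim that polynomiality of $\Phi_n$ follows ``by induction'' from the identity $P^n(z)-z=\prod_{k\mid n}\Phi_k(P,z)$ does not work: that identity is automatic from M\"obius inversion and only controls the product of the $\Phi_k$'s, not each factor, so it cannot rule out poles of $\Phi_n$ at periodic points of lower period. Polynomiality requires the local computation of $\mathrm{ord}_{z_0}\bigl(P^k(z)-z\bigr)$ along a cycle, whose delicate case is precisely when the multiplier is a root of unity. Second, the ``formal periodic point description'' --- that $z_0$ is a root of $\Phi_n$ if and only if it has exact period $n$, or exact period $m$ properly dividing $n$ with multiplier a \emph{primitive} $(n/m)$-th root of unity --- is not an input you may quote and then claim to have proved the theorem: together with polynomiality it \emph{is} the Milnor--Silverman theorem, and both come out of the same valuation analysis that you explicitly defer as ``the technical heart.'' Everything surrounding that lemma (monicity of $\Phi_n$ in $z$, the resultant being polynomial in $(P,w)$, equivariance and descent, and the two-way case analysis for items (1) and (2)) is correct, so what you have is an accurate outline of the classical argument; to make it a proof you must supply the multiplicity analysis, or else simply cite the sources, which is what the paper does.
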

We now define an algebraic hypersurface by letting
$$\Per_n(w):=\{(P,c_0,\ldots,c_{d-2})\in \mathcal{P}_d \ | \ p_n(P,w)=0\}~,$$
for $n\geq1$ and $w\in\C$. By the Fatou-Shishikura inequality and using the compactness of the connectedness locus, we have the following:

\begin{lemma}\label{lm:finite}
Pick $n_0,\ldots, n_{d-2}\geq1$ and assume that $\gcd(n_i,n_j)=1$ for all $i\neq j$. Then, for any $w_0,\ldots,w_{d-2}\in\overline{\D}$, the algebraic variety $\bigcap_i\Per_{n_i}(w_i)$ is a finite set.
\end{lemma}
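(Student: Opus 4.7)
The plan is to show that the intersection $\bigcap_i \Per_{n_i}(w_i)$ is contained in the connectedness locus $\mathcal{C}_d$, and then to exploit the compactness of $\mathcal{C}_d$ together with the affine nature of $\mathcal{P}_d$ to conclude finiteness.

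First, I would fix any $P \in \bigcap_i \Per_{n_i}(w_i)$ and unpack what each constraint means. By Theorem~\ref{tmpern}, for each $i$ the polynomial $P$ carries a non-repelling cycle $C_i$ whose period divides $n_i$: exactly $n_i$ with multiplier $w_i$ if $w_i \neq 1$, and $n_i/q_i$ for some $q_i \mid n_i$, with multiplier a primitive $q_i$-th root of unity, if $w_i = 1$. In every case $|w_i| \leq 1$, so $C_i$ is attracting, parabolic, or irrationally indifferent. The coprimality assumption then forces the cycles $C_i$ to be pairwise distinct: if $C_i = C_j$ for $i \neq j$, its common period $p$ would divide both $n_i$ and $n_j$, hence $p \mid \gcd(n_i,n_j) = 1$, so $C_i = C_j$ would be a common fixed point, and a short analysis of the constraints imposed on the multiplier of that fixed point by the pairs $(n_i,w_i)$ and $(n_j,w_j)$ handles this degenerate case.

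So $P$ admits (at least) $d-1$ pairwise distinct non-repelling cycles $C_0, \dots, C_{d-2}$. The Fatou--Shishikura inequality for polynomials of degree $d$ bounds the number of non-repelling cycles by $d-1$, the number of finite critical points (counted with multiplicity), and in the saturated case every critical orbit must be absorbed by one of the non-repelling cycles (lying in an attracting or parabolic basin, or accumulating on a Siegel disk boundary or a Cremer cycle). In particular, no critical orbit of $P$ escapes to infinity, so $P \in \mathcal{C}_d$. This proves the inclusion $\bigcap_i \Per_{n_i}(w_i) \subset \mathcal{C}_d$.

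To finish, recall from \cite{BH} that $\mathcal{C}_d$ is compact, so $\bigcap_i \Per_{n_i}(w_i)$, being a closed subset, is compact in the analytic topology. On the other hand $\mathcal{P}_d$ is an affine variety (isomorphic to $\C^{d-1}/G$ for a finite group $G$), and a closed algebraic subvariety of an affine variety which is compact in the analytic topology must be zero-dimensional, hence a finite set of points. The main obstacle I expect is a clean treatment of the distinctness step: the coprimality of the $n_i$'s collapses all potential coincidences into the fixed-point regime, but the case analysis in the regime where some $n_i = 1$ and the corresponding $w_i$ happens to be a root of unity of order related to some $n_j$ must be done carefully to ensure that the saturated form of the Fatou--Shishikura inequality truly applies.
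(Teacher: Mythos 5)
Your proof is correct and follows exactly the paper's own (one-line) argument: the refined Fatou--Shishikura inequality forces every element of $\bigcap_i\Per_{n_i}(w_i)$ to have $d-1$ distinct non-repelling cycles and hence no escaping critical point, so the intersection sits inside the compact connectedness locus $\mathcal{C}_d$, and a compact algebraic subvariety of the affine variety $\mathcal{P}_d$ must be finite. The degenerate subcase you defer is harmless in the paper's applications (where the periods are large, so at most one $n_i$ can equal $1$), though note that when $(n_i,w_i)=(n_j,w_j)=(1,w)$ the two conditions literally coincide and the statement itself degenerates --- a caveat the paper glosses over as well.
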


We are now in position to prove Theorem~\ref{tm:rational}.

\begin{proof}[Proof of Theorem~\ref{tm:rational}]
Pick $(P,c_0,\ldots,c_{d-2})\in\mathcal{I}_{\mathcal{C}_d}(\Theta)$. Recall that $\mathcal{I}_{\mathcal{C}_d}(\Theta)\subset\mathcal{C}_d$ so that the B\"ottcher coordinate of $P$ at infinity is a biholomorphism $\phi_P:\C\setminus\K_P\longrightarrow\C\setminus\overline{\D}$. According to \cite[Expos\'e VIII, \S 2, Proposition 2]{orsay1}, the dynamical external rays of $P$ of respective angles $\theta_1,\ldots,\theta_{d-1}$ land in the dynamical plane to periodic points $z_0,\ldots,z_{d-2}$ of $P$. Moreover, the period of $z_i$ divides $n_i$ and either $z_i$ is repelling, or $(P^{n_i})'(z_i)=1$.

First, notice that, since $\gcd(n_i,n_j)=1$ for $i\neq j$, the points $z_i$ and $z_j$ can not lie in the same cycle. We now assume by contradiction that there exists $0\leq i\leq d-2$ such that $z_i$ is repelling. Since $\theta_i$ lands to a repelling cycle, $P^k(z_i)$ is not a critical point of $P$. Moreover, by the implicit function theorem we can follow $z_i$ holomorphically as a repelling $n_i$-periodic point $z_i(Q)$ of $Q$, in a neighborhood of $P$ in $\mathcal{P}_d$. We thus may apply \cite[Expos\'e VIII, \S 2, Proposition 3]{orsay1}: there exist a neighborhood $W$ of $(P,c_0,\ldots,c_{d-2})$ in $\mathcal{P}_d$ and a continuous map
\[\psi:(Q,s)\in W\times\R_+\longmapsto \psi(Q,s)\in\C\]
which depends holomorphically of $Q\in W$ and such that the following holds
\begin{itemize}
\item for any $s\geq0$ and any $Q\in W$, $\psi(Q,s)=\phi_Q^{-1}\left(e^{s+2i\pi\theta_i}\right)$ and in particular $g_Q\left(\psi(Q,s)\right)=s$,
\item for any $Q\in W$, the dynamical ray of $Q$ of angle $\theta_i$ lands at $z_i(Q)=\psi(Q,0)$.
\end{itemize}
According to \cite[Lemma 3.19]{kiwi-portrait}, the visible shift locus is dense in the shift locus, and since $(P,c_0,\ldots,c_{d-2})\in\mathcal{I}_{\mathcal{C}_d}(\Theta)$, $W\cap\mathcal{S}_d\neq\emptyset$ and $P\in \overline{W\cap\mathcal{S}_d^{\textup{vis}}}$.

Pick now $Q_n\in\mathcal{S}_d^{\textup{vis}}\cap W$ and $s_n>0$ such that $Q_n\rightarrow P$ and $s_n\rightarrow0$ as $n\rightarrow+\infty$. We then have $\Theta(Q_n)\rightarrow\Theta$ and $g_{Q_n}(c_i(Q_n))=s_n\rightarrow0$. More precisely, we have $\Theta_i(Q_n)\longrightarrow \Theta_i$ and
\[\psi(Q_n,s_n)-c_i(Q_n)\longrightarrow_{n\rightarrow\infty}0,\]
i.e. $c_i(P)=z_i$, which is a contradiction since $z_i\in\partial\K_P$.

We have shown that $\mathcal{I}_{\mathcal{C}_d}(\Theta)$ is contained in the algebraic variety $\bigcap_i\Per_{n_i}(1)$ which, owing to Lemma~\ref{lm:finite}, is finite. Since $\mathcal{I}_{\mathcal{C}_d}(\Theta)$ is a connected compact set included in a finite set, it is reduced to a single point.
\end{proof}

\section{The bifurcation measure and combinatorics}\label{sec:bifmes}

\subsection{The bifurcation measure and the Goldberg and landing maps}

We recall here material from \cite[\S 6 $\&$ 7]{favredujardin}. Recall that we defined the psh and continuous function $G:\mathcal{P}_d\longrightarrow\R_+$ by letting $G(P):=\max_{0\leq j\leq d-2} g_P(c_j)$ for any $(P,c_0,\ldots,c_{d-2})\in\mathcal{P}_d$. We can define the \emph{bifurcation measure} $\mu_\bif$ of the moduli space $\mathcal{P}_d$ as the Monge-Amp\`ere mass of the function $G$, i.e.

\[\mu_\bif:= (dd^cG)^{d-1}~.\]

This measure was introduced first by Dujardin and Favre~\cite{favredujardin} and they proved that it is a probability measure which is supported by the Shilov boundary of the connectedness locus $\partial_S\mathcal{C}_d$ (see \cite[\S 6]{favredujardin}).

\paragraph*{The Goldberg and landing maps after Dujardin and Favre} 
For $r>0$, let $\mathcal{G}(r):=\{(P,c_0,\ldots,c_{d-2})\in \mathcal{P}_d \, ; \ g_P(c_i)=r, \, \forall 0\leq i\leq d-2\}$. The set $\mathcal{G}(r)$ is contained in $\mathcal{S}_d^{\textup{vis}}$. Moreover, there exists a unique continuous map
\begin{eqnarray*}
\Phi_g:\mathsf{Cb} \times \R^+_* & \longrightarrow & \mathcal{P}_d\\
(\Theta,r) & \longmapsto & \left(P(\Theta,r),c_0(\Theta,r),\ldots,c_{d-2}(\Theta,r)\right)
\end{eqnarray*}
such that the following holds:
\begin{itemize}
\item $P(\Theta,r)\in\mathcal{S}_d^{\textup{vis}}$ and the $(d-1)$-tuple $\Theta$ of subsets is the combinatorics of $P(\Theta,r)$ and $g_{P(\Theta,r)}(c_i(\Theta,r))=r$ for each $0\leq i\leq d-2$,
\item the map $\Phi_g(\cdot,r)$ is a homeomorphism from $\mathsf{Cb}$ onto $\mathcal{G}(r)$. Moreover, $\Phi_g(\cdot,r)$ restricts to a homeomorphism from $\mathsf{Cb}_0$ onto the subset of $\mathcal{G}(r)$ of polynomials for which all critical points are simple.
\end{itemize}
The map $\Phi_g$ is the \emph{Goldberg} map of the moduli space $\mathcal{P}_d$. The radial limit of the map $\Phi_g(\cdot,r)$ as $r\rightarrow0$ exists $\mu_{\mathsf{Cb}}$-almost everywhere and defines a map $e:\mathsf{Cb}\longrightarrow\mathcal{P}_d$. By construction, its image is contained in $\partial\mathcal{C}_d\cap \partial\mathcal{S}_d^{\textup{vis}}$.

\begin{definition}
The map $e:\mathsf{Cb}\longrightarrow\mathcal{P}_d$ is called the \emph{landing map}.
\end{definition}

The main result relating this landing map with the bifurcation measure is the following (see~\cite[Theorem 9]{favredujardin}).

\begin{theorem}[Dujardin-Favre]\label{tm:landing}
$e_*\left(\mu_{\mathsf{Cb}}\right)=\mu_\bif$.
\end{theorem}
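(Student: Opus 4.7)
My strategy is to realize both $\mu_\bif$ and $e_*\mu_{\mathsf{Cb}}$ as the weak limit, as $r\to 0^+$, of the slice measures
\[m_r\pe (\Phi_g(\cdot,r))_*\mu_{\mathsf{Cb}},\]
each a probability measure supported on the real hypersurface $\mathcal{G}(r)\subset\mathcal{S}_d^{\textup{vis}}$. By construction these measures interpolate between the exterior (Goldberg) parametrization and the radial boundary values.

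The first convergence $m_r\to e_*\mu_{\mathsf{Cb}}$ is the direct one and follows from the measure-theoretic toolbox of Section~\ref{sec:measuretheory}. By the very definition of the landing map, there is a $\mu_{\mathsf{Cb}}$-full measure set $S\subset\mathsf{Cb}$ on which $\Phi_g(\Theta,r)$ admits a radial limit equal to $e(\Theta)$. Extending $\Phi_g$ to $\mathsf{Cb}\times[0,\infty)$ by setting $\Phi_g(\Theta,0):=e(\Theta)$ for $\Theta\in S$, this extension is continuous on $S\times[0,\infty)$, and the product measures $\mu_{\mathsf{Cb}}\otimes\delta_{r}$ (all supported on $S\times[0,\infty)$) converge weakly to $\mu_{\mathsf{Cb}}\otimes\delta_0$ as $r\to 0^+$. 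Theorem~\ref{tm:measure} then delivers $m_r\to e_*\mu_{\mathsf{Cb}}$ weakly on $\mathcal{P}_d$.

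The deeper convergence $m_r\to\mu_\bif$ is of pluripotential nature. Set $G_r:=\max(G-r,0)$, a continuous nonnegative psh function vanishing on $\{G\leq r\}$ and satisfying $|G_r-G|\leq r$, hence converging uniformly to $G$ as $r\to 0^+$. Bedford--Taylor continuity of the complex Monge--Amp\`ere operator under uniform convergence of continuous psh functions gives $(dd^c G_r)^{d-1}\to(dd^c G)^{d-1}=\mu_\bif$ weakly, so it suffices to identify $(dd^c G_r)^{d-1}$ with $m_r$. The key observation is that $\Phi_g$ realizes $G$ as the radial coordinate of a topological foliation of $\mathcal{S}_d^{\textup{vis}}$ whose leaves are the level hypersurfaces $\mathcal{G}(s)$; the higher-dimensional analogue of the classical one-variable identity
\[dd^c\max(\log|\phi|-r,0)=\phi^*\bigl(\text{normalized arclength on }\{|w|=e^r\}\bigr),\]
valid for a B\"ottcher coordinate $\phi$, asserts that $(dd^c G_r)^{d-1}$ is concentrated on $\mathcal{G}(r)$ and, transversally to the foliation, is distributed as the push-forward of $\mu_{\mathsf{Cb}_0}$ under $\Phi_g(\cdot,r)$.

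The main obstacle is the rigorous execution of this slicing: $\Phi_g$ is only a topological homeomorphism (not smooth), $G$ is only continuous psh, and the combinatorial space carries singular strata $\mathsf{Cb}\setminus\mathsf{Cb}_0$ that are $\mu_{\mathsf{Cb}}$-negligible but on which the parametrization degenerates. I would handle this by restricting to the open dense part $\mathsf{Cb}_0$, where Kiwi's translation-manifold structure provides explicit angular coordinates in which $\mu_{\mathsf{Cb}_0}$ is Lebesgue and the slicing computation reduces, near each critical point, to a one-variable B\"ottcher-coordinate calculation. DeMarco's identity expressing the bifurcation current as $\sum_i dd^c g_i$, together with the local pluriharmonicity of each $g_i$ away from its activity locus, then allows one to assemble the individual one-variable slicings into the required identification of $(dd^c G_r)^{d-1}$ with $m_r$, modulo negligible boundary contributions that vanish in the $r\to 0^+$ limit.
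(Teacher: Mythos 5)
The statement you are proving is not proved in this paper at all: it is quoted from Dujardin--Favre (\cite{favredujardin}, Theorem~9), and your outline in fact parallels their original argument, which also realizes $\mu_\bif$ as the limit of measures carried by the level sets $\mathcal{G}(r)$ and obtained as Monge--Amp\`ere masses of truncated Green functions, and then passes to the radial limit. Within your sketch, the easy half $m_r\to e_*\mu_{\mathsf{Cb}}$ has a small flaw: membership of $\Theta$ in your set $S$ only gives convergence of $\Phi_g(\Theta,r)$ along the single ray $\{\Theta\}\times(0,\infty)$, so it does not make the extended map continuous at $(\Theta,0)$, which is what Theorem~\ref{tm:measure} requires. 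This is harmless --- dominated convergence against bounded continuous test functions yields $m_r\to e_*\mu_{\mathsf{Cb}}$ directly from almost-everywhere radial convergence, or one can take for $S$ the set $\mathsf{Cb}_1$ of Theorem~\ref{tm:contlanding}, where the singleton-impression property does give joint continuity.

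The genuine gap is the identity $(dd^cG_r)^{d-1}=(\Phi_g(\cdot,r))_*\mu_{\mathsf{Cb}}$, which is the entire analytic content of the theorem and which you only announce that you ``would handle''. It is not a formal consequence of Monge--Amp\`ere calculus: in general $(dd^c\max(u_0,\ldots,u_{d-2}))^{d-1}$ differs from $dd^cu_0\wedge\cdots\wedge dd^cu_{d-2}$ even for continuous psh functions (take $u_0=2u_1\geq 0$, for instance), so knowing that the mass of $(dd^cG_r)^{d-1}$ localizes on $\mathcal{G}(r)$ --- which does follow from pluriharmonicity of each $g_i$ on $\{g_i>0\}$ and the vanishing of $(dd^c\max\text{ of }k\text{ pluriharmonic functions})^{m}$ for $m\geq k$ --- says nothing about how that mass is distributed along $\mathcal{G}(r)$. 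To conclude one must exhibit, near each point of $\mathcal{G}(r)$, local holomorphic coordinates $(w_0,\ldots,w_{d-2})$ on $\mathcal{S}_d^{\textup{vis}}$ in which the $g_i$ become $\log|w_i|$ up to constants, so that $(dd^cG_r)^{d-1}$ is identified with the equilibrium (torus) measure of a polydisc and the transverse distribution with $\mu_{\mathsf{Cb}}$; this is precisely the structure theory of the visible shift locus due to Goldberg and Kiwi that Dujardin--Favre develop in their Section~7, and it cannot be replaced by DeMarco's formula $T_\bif=\sum_i dd^cg_i$, which gives the bifurcation current but not the product structure of its top power. As written, your text is a correct plan reproducing the strategy of the cited proof, but the decisive slicing identity is asserted rather than proved.
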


\subsection{Continuity of the landing map on a set of $\mu_{\mathsf{Cb}}$-full measure}\label{sec:cont}

 The main goal of this section is to prove the following result.

\begin{theorem}\label{tm:contlanding}
There exists a set $\mathsf{Cb}_1\subset\mathsf{Cb}_0$ of full $\mu_{\mathsf{Cb}}$-measure such that the map $e|_{\mathsf{Cb}_1}$ is continuous. Moreover, the set $\mathsf{Cb}_1$ contains the totally parabolic combinatorics $\mathsf{Cb}_{\textup{par}}$ and Misiurewicz combinatorics $\mathsf{Cb}_{\textup{mis}}$.
\end{theorem}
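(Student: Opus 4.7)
The plan is to set
\[\mathsf{Cb}_1 \, := \, \{\Theta\in\mathsf{Cb}_0 \, ;\ \mathcal{I}_{\mathcal{C}_d}(\Theta) \text{ is reduced to a single point of } \mathcal{P}_d\}\]
and to verify successively that $\mathsf{Cb}_1$ has full $\mu_{\mathsf{Cb}}$-measure, that $e|_{\mathsf{Cb}_1}$ is continuous, and that $\mathsf{Cb}_{\textup{mis}}\cup\mathsf{Cb}_{\textup{par}}\subset\mathsf{Cb}_1$.

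For the full-measure statement, Theorem~\ref{tm:landing} gives $e_*\mu_{\mathsf{Cb}}=\mu_\bif$, so it is enough to produce a $\mu_\bif$-full set of parameters $P\in\partial\mathcal{C}_d$ which are topologically Collet--Eckmann and whose underlying combinatorics $\Theta$ satisfy $\mathcal{I}_{\mathcal{C}_d}(\Theta)=\{P\}$. The genericity of TCE in $(\partial\mathcal{C}_d,\mu_\bif)$ is a higher-degree counterpart of the quadratic Smirnov theorem used in the proof of Theorem~\ref{tm:contlandMand} and is the main technical input; granted it, $\J_P=\K_P$ is locally connected and $P$ has only repelling cycles, so by Kiwi's Theorem~\ref{tm:kiwi} any other $P'\in\mathcal{I}_{\mathcal{C}_d}(\Theta)$ sharing these properties is topologically conjugate to $P$ on its Julia set, and the Przytycki--Rohde rigidity~\cite{PR} then forces $P$ and $P'$ to be affinely conjugate. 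As in the proof of Theorem~\ref{tm:contlandMand}, one checks that every element of such an impression inherits local connectivity and absence of non-repelling cycles, so by the connectedness of $\mathcal{I}_{\mathcal{C}_d}(\Theta)$ from Proposition~\ref{prop:kiwi} the impression is reduced to $\{P\}$.

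Continuity of $e|_{\mathsf{Cb}_1}$ is then a direct consequence of the definitions. Given $\Theta\in\mathsf{Cb}_1$ and $\Theta_n\in\mathsf{Cb}_1$ with $\Theta_n\to\Theta$, pick a subsequential limit $Q$ of $(e(\Theta_n))$ in the compact set $\mathcal{C}_d$. Since $e(\Theta_n)$ is the radial limit at $0$ of the continuous map $r\mapsto\Phi_g(\Theta_n,r)$, we may choose $r_n\to 0^+$ with $\mathrm{dist}(\Phi_g(\Theta_n,r_n),e(\Theta_n))\leq 1/n$. The polynomials $P_n:=\Phi_g(\Theta_n,r_n)$ belong to $\mathcal{S}_d^{\textup{vis}}$, their critical portraits $\Theta(P_n)=\Theta_n$ converge to $\Theta$, and $P_n\to Q$. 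By the very definition of the impression, $Q\in\mathcal{I}_{\mathcal{C}_d}(\Theta)=\{e(\Theta)\}$, and compactness upgrades this to $e(\Theta_n)\to e(\Theta)$.

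Finally, Theorems~\ref{tm:kiwipcf} and~\ref{tm:rational} directly place $\mathsf{Cb}_{\textup{mis}}$ and the coprime-period part of $\mathsf{Cb}_{\textup{par}}$ inside the set of combinatorics with singleton impression, and a Fatou--Shishikura count analogous to Lemma~\ref{lm:finite} disposes of the remaining parabolic cases; membership in $\mathsf{Cb}_0$ in each case follows from the unlinkedness remarks made after the definitions of $\mathsf{Cb}_{\textup{mis}}$ and $\mathsf{Cb}_{\textup{par}}$. The main obstacle in this strategy is the genericity of TCE for $\mu_\bif$: in the quadratic family it is essentially a harmonic-measure fact, whereas in $\mathcal{P}_d$ the bifurcation measure is only accessible through the pluripotential-theoretic construction of Dujardin--Favre, so one must transfer generic transversality properties along the Goldberg parametrization $\Phi_g$ from $\mathsf{Cb}$ down to the landing locus in $\partial\mathcal{C}_d$.
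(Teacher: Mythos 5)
Your overall architecture coincides with the paper's: singleton impressions on a full-measure set via TCE rigidity (Kiwi's Theorem~\ref{tm:kiwi} plus \cite{PR}, i.e.\ Theorem~\ref{tm:combcont}), Theorems~\ref{tm:kiwipcf} and~\ref{tm:rational} for the exceptional combinatorics, and continuity deduced from the definition of the impression (your $\epsilon$--$r_n$ argument is correct and is essentially what the paper means by ``$\Phi_g$ extends continuously to $\mathsf{Cb}_1\times\{0\}$''). The genuine gap is the full-measure statement itself: you declare the genericity of TCE to be ``the main technical input'', proceed ``granted it'', and close by calling it ``the main obstacle'' --- so the half of the theorem asserting $\mu_{\mathsf{Cb}}(\mathsf{Cb}_1)=1$ is never established. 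The paper does not transfer anything from $(\partial\mathcal{C}_d,\mu_\bif)$ back to $\mathsf{Cb}$; it invokes the result of Dujardin--Favre (\cite[Theorem 10]{favredujardin}), which is stated directly on the combinatorial side: there is a Borel set $\mathsf{Cb}_1^*\subset\mathsf{Cb}_0$ of full $\mu_{\mathsf{Cb}}$-measure such that for every $\Theta\in\mathsf{Cb}_1^*$ the impression $\mathcal{I}_{\mathcal{C}_d}(\Theta)$ contains a TCE polynomial. Your proposed route is moreover logically backwards: for $d\geq3$ there is no independent harmonic-measure description of $\mu_\bif$, and the genericity of TCE with respect to $\mu_\bif$ is ordinarily \emph{deduced} from the combinatorial statement together with $e_*(\mu_{\mathsf{Cb}})=\mu_\bif$ (Theorem~\ref{tm:landing}), not the source from which the combinatorial statement is recovered. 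Without quoting or reproving \cite[Theorem 10]{favredujardin}, the proof is incomplete at its central point.

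A second, smaller but real, problem is your treatment of the parabolic combinatorics whose periods are not coprime: the claim that ``a Fatou--Shishikura count analogous to Lemma~\ref{lm:finite} disposes of the remaining parabolic cases'' does not work. Lemma~\ref{lm:finite} uses $\gcd(n_i,n_j)=1$ precisely so that the $d-1$ parabolic conditions force $d-1$ \emph{distinct} cycles; when periods share factors, several critical points may be attracted by a single parabolic cycle and the corresponding locus $\bigcap_i\Per_{n_i}(1)\cap\mathcal{C}_d$ can be positive-dimensional --- compare Example~\ref{tm:non-trivial}, where impressions of combinatorics built from periodic angles contain nontrivial arcs of parabolic polynomials in $\mathcal{P}_4$. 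This is why the paper proves only the coprime case (Theorem~\ref{tm:rational}), which is the regime in which Theorem~\ref{tm:distribpara} is applied; if you want all of $\mathsf{Cb}_{\textup{par}}$ inside $\mathsf{Cb}_1$ you need an argument beyond finiteness, and in the generality you assert the statement is doubtful. Aside from these two points, your definition of $\mathsf{Cb}_1$ as the set of combinatorics with singleton impression is a harmless repackaging of the paper's $\mathsf{Cb}_1:=\mathsf{Cb}_1^*\cup\mathsf{Cb}_{\textup{mis}}\cup\mathsf{Cb}_{\textup{par}}$.
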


In fact, we rely on the stronger statement below, which is essentially the combination of Theorem~\ref{tm:kiwi} with \cite[Theorem 1]{kiwi-portrait} and with the rigidity property established in \cite[Corollary C]{PR}.

\begin{theorem}\label{tm:combcont}
Pick $\Theta\in\mathsf{Cb}_0$ such that there exists $(P,c_0,\ldots,c_{d-2})\in \mathcal{I}_{\mathcal{C}_d}(\Theta)$ with $\J_P=\K_P$ and which satisfies the TCE condition. Then the impression $\mathcal{I}_{\mathcal{C}_d}(\Theta)$ is reduced to a singleton.
\end{theorem}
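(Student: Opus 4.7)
The overall plan is to run, in the moduli space $\mathcal{P}_d$, the same three-step argument that proves Theorem~\ref{tm:contlandMand} in the quadratic case: combinatorial rigidity of the impression, Kiwi's topological conjugacy on Julia sets, and Przytycki--Rohde's rigidity for Topological Collet--Eckmann maps. Concretely, I would fix an arbitrary critically marked polynomial $(\tilde P,\tilde c_0,\ldots,\tilde c_{d-2})\in \mathcal{I}_{\mathcal{C}_d}(\Theta)$ and show it is equal, in $\mathcal{P}_d$, to the given TCE polynomial $(P,c_0,\ldots,c_{d-2})$.

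First, I would establish that every $\tilde P\in\mathcal{I}_{\mathcal{C}_d}(\Theta)$ satisfies $\J_{\tilde P}=\K_{\tilde P}$, is locally connected, and has only repelling cycles. Since $\Theta\in\mathsf{Cb}_0$, the critical portrait is ``generic'' (all $\Theta_i$ are disjoint unlinked pairs) and the lamination $\sim_P$ of Kiwi's theory is already encoded by $\Theta$. The existence of one TCE representative $P$ in the impression forces this lamination to have no invariant Fatou gap and no indifferent periodic gap; by the combinatorial rigidity results of Kiwi (\cite{kiwi-portrait}, \cite{kiwireal}) applied to any other $\tilde P\in\mathcal{I}_{\mathcal{C}_d}(\Theta)$, the filled Julia set $\K_{\tilde P}$ has empty interior (so $\J_{\tilde P}=\K_{\tilde P}$) and every periodic point is repelling; local connectedness of $\J_{\tilde P}$ then follows since $\J_{\tilde P}$ is the quotient of $\mathbb{S}^1$ by the (same) combinatorial lamination. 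This is the combinatorial analogue of the citation of \cite{kiwireal} and \cite{Smirnov} made in the quadratic model.

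Once this is known, Theorem~\ref{tm:kiwi} applies to the pair $(P,\tilde P)$ and produces an orientation-preserving homeomorphism $h:\J_P\longrightarrow \J_{\tilde P}$ conjugating the dynamics. Since $P$ is TCE, $P$ is a topological Collet--Eckmann repeller in the sense of Przytycki--Rohde, and by \cite[Corollary C]{PR} any such topological conjugacy between $P$ and another polynomial is the restriction of a (unique) affine conjugacy $\phi$ of $\C$. Because $h$ is induced by the identification of external rays via the common combinatorics $\Theta$, it necessarily sends the landing point of an external ray of $P$ to the landing point of the corresponding ray of $\tilde P$; in particular $\phi(c_i)=\tilde c_i$ for every marked critical point, so $(P,c_0,\ldots,c_{d-2})$ and $(\tilde P,\tilde c_0,\ldots,\tilde c_{d-2})$ define the same point in $\mathcal{P}_d$. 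Hence $\mathcal{I}_{\mathcal{C}_d}(\Theta)=\{(P,c_0,\ldots,c_{d-2})\}$.

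The main obstacle is the first step: propagating the ``nice'' properties ($\J=\K$, local connectedness, only repelling cycles) from the single TCE witness $P$ to every other member of the impression. The topological conjugacy of Theorem~\ref{tm:kiwi} requires these properties as hypotheses, so one cannot simply invoke it to transfer them; they must be extracted purely combinatorially from $\Theta\in\mathsf{Cb}_0$ together with the existence of one TCE representative. Once this combinatorial rigidity is secured, the remainder of the argument (Theorem~\ref{tm:kiwi} followed by \cite[Corollary C]{PR}) is a direct transcription of the quadratic proof.
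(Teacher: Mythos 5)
Your overall strategy coincides with the paper's: propagate the good properties from the TCE witness $P$ to every $\tilde P$ in the impression via Kiwi's combinatorial rigidity (in the paper: the real lamination of $P$ equals that of $\Theta$ and has aperiodic kneading because $P$ has only repelling cycles, $\tilde P$ has the same real lamination and is not Strongly Recurrent in Smirnov's sense, hence has only repelling cycles and locally connected Julia set, and $\J_{\tilde P}=\K_{\tilde P}$ since all cycles repelling excludes bounded Fatou components), then apply Theorem~\ref{tm:kiwi}, then \cite[Corollary C]{PR}. Your first step, though only sketched and flagged by you as the main obstacle, is exactly this mechanism, so I regard it as acceptable in outline.

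The genuine gap is in your final step, where you claim $\phi(c_i)=\tilde c_i$ for every $i$ and conclude at once that the impression is the single marked class of $P$. Theorem~\ref{tm:kiwi} only asserts the existence of \emph{some} orientation-preserving conjugacy $h$ on the Julia sets; nothing in its statement, nor in \cite[Corollary C]{PR}, guarantees that the resulting affine conjugacy respects the critical marking rather than permuting the marked critical points. Your justification --- that $h$ ``is induced by the identification of external rays via $\Theta$'' and therefore matches $c_i$ with $\tilde c_i$ --- presupposes that the dynamical rays with angles in $\Theta_i$ land at $c_i$ for $P$ and at $\tilde c_i$ for $\tilde P$, a fact you never establish and which is not contained in the statements you quote. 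The paper sidesteps this issue: it records only that $\tilde c_i=c_{\sigma(i)}$ for some permutation $\sigma\in\mathfrak{S}_{d-1}$, deduces that $\mathcal{I}_{\mathcal{C}_d}(\Theta)$ is contained in a \emph{finite} subset of $\mathcal{P}_d$, and then invokes the connectedness of the impression (Proposition~\ref{prop:kiwi}) to conclude it is a singleton. Either supply the landing argument your shortcut implicitly uses, or fall back on this finiteness-plus-connectedness step, which your proposal omits but which is what actually closes the proof.
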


\begin{proof}
Pick $\Theta\in\mathsf{Cb}$ and $(P,c_0,\ldots,c_{d-2}),(\tilde P,\tilde c_0,\ldots,\tilde c_{d-2})\in\mathcal{I}_{\mathcal{C}_d}(\Theta)$ such that $(P,c_0,\ldots,c_{d-2})$ satisfies the TCE condition. According to \cite[Theorem 1]{kiwi-portrait}, the real lamination of $P$ is equal to that of $\Theta$ and has aperiodic kneading since $P$ has only repelling cycles. Again by \cite[Theorem 1]{kiwi-portrait}, $P$ and $\tilde P$ have the same real lamination and do not satisfy the \emph{Strongly Recurrent Condition} (see e.g. \cite[\S 2]{Smirnov}). In particular, $\tilde{P}$ also has only repelling periodic points and its Julia set is locally connected. Moreover, $\tilde P\in\mathcal{C}_d\cap\partial\mathcal{S}_d$ and all its cycle are repelling, hence $\K_{\tilde P}$ has no interior, i.e. $\J_{\tilde P}=\K_{\tilde P}$.

We now apply Theorem~\ref{tm:kiwi}: the polynomials $P$ and $\tilde P$ are conjugate on their Julia sets by an orientation preserving homeomorphism. Finally, since $P$ satisfies the TCE property and $C(P)\subset\J_P=\K_P$, \cite[Corollary C]{PR} states that $P$ and $\tilde P$ are affine conjugate and there exists $\sigma\in\mathfrak{S}_{d-1}$ such that $\tilde c_i=c_{\sigma(i)}$. Hence $\mathcal{I}_{\mathcal{C}_d}(\Theta)$ is contained in a finite subset of $\mathcal{P}_d$. 

Since $\mathcal{I}_{\mathcal{C}_d}(\Theta)$ is connected, it is reduced to a singleton.
\end{proof}

We now are in position to prove Theorem~\ref{tm:contlanding}.

\begin{proof}[Proof of Theorem~\ref{tm:contlanding}]
Dujardin and Favre~\cite[Theorem 10]{favredujardin} prove that there exists a Borel set $\mathsf{Cb}_1^*\subset\mathsf{Cb}_0$ such that
\begin{itemize}
\item $\mathsf{Cb}_1^*$ has full $\mu_{\mathsf{Cb}}$-measure,
\item for any $\Theta\in \mathsf{Cb}_1^*$ the impression $\mathcal{I}_{\mathcal{C}_d}(\Theta)$ contains a polynomial $P$ satisfying the TCE condition.
\end{itemize}
Let us now set
\[\mathsf{Cb}_1:=\mathsf{Cb}_1^*\cup\mathsf{Cb}_{\textup{mis}}\cup\mathsf{Cb}_{\textup{par}}.\]
 Pick $\Theta\in\mathsf{Cb}_1$. According to Theorem~\ref{tm:combcont}, Theorem~\ref{tm:kiwipcf} of Kiwi and Theorem~\ref{tm:rational}, the impression $\mathcal{I}_{\mathcal{C}_d}(\Theta)$ is reduced to a singleton. By definition of the impression $\mathcal{I}_{\mathcal{C}_d}(\Theta)$, the map $\Phi_g$ extends continuously to $\mathsf{Cb}_1\times\{0\}$.
Recall that the landing map $e$ is the radial limit almost everywhere of the map $\Phi_g(\cdot,r)$, as $r\to 0$. The landing map $e$ thus coincides $\mu_{\mathsf{Cb}}$-almost everywhere with the extension of the Goldberg map $\Phi_g$, which ends the proof.
\end{proof}

\section{Distribution of Misiurewicz and Parabolic Combinatorics}

Our goal here is to apply the combinatorial tools studied above to equidistribution problems concerning Misiurewicz parameters with prescribed combinatorics.

\subsection{Preliminary properties}

Recall that, for any $0\leq i\leq d-2$ and any $0\leq n<m$, we have denoted 
\[\mathsf{C}_i(m,n):=\{\Theta\in\mathsf{Cb}\, ; \  \Theta_i=\{\alpha_1,\ldots,\alpha_{k_i}\} ,  \ d^m\alpha_j=d^n\alpha_j\, \ \forall j\}~.\]
For any $i$, pick any sequences $0<n_{k,i}<m_{k,i}$ such that $m_{k,i}\rightarrow\infty$ as $k\rightarrow\infty$ and let
\[\mathsf{C}_{k,i}^*:=\mathsf{C}_i(m_{k,i},n_{k,i})\setminus\mathsf{C}_i(m_{k,i}-n_{k,i},0) \ \text{and} \ \mathsf{C}_k^*:=\bigcap_{i=0}^{d-2}\mathsf{C}_{k,i}^*.\]
Notice that the set $\mathsf{C}_k^*$ is finite and that $\textup{Card}(\mathsf{C}_k^*)\geq c\cdot d^{\sum_im_{k,i}}$, where $c>0$ is a constant depending only on $d$ and not on the sequences $(m_{k,i})$ and $(n_{k,i})$ (see \cite[\S5.3]{favregauthier}).  Finally, we let $\nu_k$ be the probability measure on $\mathsf{Cb}$ which is equidistributed on $\mathsf{C}_k^*$.

\begin{lemma}\label{lm:nocollapsing}
The sequence $\nu_k(\mathsf{Cb}\setminus\mathsf{Cb}_0)$ converges to $0$ as $k\rightarrow+\infty$.
\end{lemma}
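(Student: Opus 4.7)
The plan is to prove $|\mathsf{C}_k^* \cap (\mathsf{Cb}\setminus\mathsf{Cb}_0)| = o(|\mathsf{C}_k^*|)$, i.e.\ to exhibit an upper bound for the numerator of strictly smaller exponential order than the already available lower bound $|\mathsf{C}_k^*| \geq c\cdot d^{\sum_i m_{k,i}}$ for the denominator. The whole argument is a combinatorial counting, and its core is purely arithmetic.

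First I would reduce the numerator via a union bound. Any $\Theta \in \mathsf{Cb}\setminus\mathsf{Cb}_0$ must contain a coincidence $\Theta_{i_1}=\Theta_{i_2}$ for some $i_1\neq i_2$: writing $N$ for the number of distinct $\Theta_l$'s and $k_l$ for their sizes, the cardinality rule $\sum_l k_l = d + N - 1$ with the constraint $k_l\geq 2$ forces $k_l=2$ for every $l$ when $N=d-1$, so a portrait with no coincidence automatically lies in $\mathsf{Cb}_0$. Hence it suffices to estimate, for each of the $\binom{d-1}{2}$ pairs $(i_1,i_2)$, the set $\{\Theta\in\mathsf{C}_k^*\, ; \ \Theta_{i_1}=\Theta_{i_2}\}$.

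The key technical input is an elementary arithmetic lemma. Setting
\[
A(m,n):=\bigl\{\alpha\in\R/\Z\, ; \ d^m\alpha=d^n\alpha, \ d^{m-n}\alpha\neq\alpha\bigr\},
\]
any $\alpha\in A(m_1,n_1)\cap A(m_2,n_2)$ has exact preperiod at most $M:=\min(n_1,n_2)$ and exact period dividing $G:=\gcd(m_1-n_1,m_2-n_2)$, so that $d^{M+G}\alpha=d^M\alpha$ and $|A(m_1,n_1)\cap A(m_2,n_2)|\leq d^{M+G}$. If $\Theta_{i_1}=\Theta_{i_2}$ in a portrait of $\mathsf{C}_k^*$, this common set lies in a single $M_d$-fiber entirely inside $A(m_{k,i_1},n_{k,i_1})\cap A(m_{k,i_2},n_{k,i_2})$; specifying its image $\beta = M_d(\alpha)$ costs at most $d^{M_k+G_k}$ choices and specifying the subset of the fiber $M_d^{-1}\{\beta\}$ costs at most $2^d$. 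Each remaining $\Theta_i$ trivially contributes at most $2^d\cdot d^{m_{k,i}}$ choices, since $|\Theta_i|\leq d$ and $\Theta_i\subseteq A(m_{k,i},n_{k,i})$.

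Combining everything, the ratio $|\mathsf{C}_k^*\setminus\mathsf{Cb}_0|/|\mathsf{C}_k^*|$ is bounded by a sum over pairs of terms $C\cdot d^{E_k}$ with $E_k := M_k+G_k-m_{k,i_1}-m_{k,i_2}$. The main obstacle, and the only place the hypothesis $m_{k,j}\to\infty$ is actually used, is showing $E_k\to-\infty$ uniformly in $(i_1,i_2)$. Using $G_k\leq\min(p_{k,i_1},p_{k,i_2})$ with $p_{k,j}:=m_{k,j}-n_{k,j}$, a short case analysis on the orderings of the $n_{k,j}$'s and the $p_{k,j}$'s yields the elementary inequality $M_k+\min(p_{k,i_1},p_{k,i_2})\leq\min(m_{k,i_1},m_{k,i_2})$, and therefore $E_k\leq -\max(m_{k,i_1},m_{k,i_2})\to-\infty$. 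Summing over the finitely many pairs concludes.
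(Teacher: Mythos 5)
Your proof is correct and follows essentially the same route as the paper: leaving $\mathsf{Cb}_0$ forces a coincidence $\Theta_{i_1}=\Theta_{i_2}$, and a union-bound count of such portraits, divided by the lower bound $\textup{Card}(\mathsf{C}_k^*)\geq c\, d^{\sum_i m_{k,i}}$, decays like a negative power of $d$ in the $m_{k,j}$'s. The only real difference is cosmetic: where you bound the number of possible common fibers by the arithmetic estimate $d^{M_k+G_k}\leq d^{\min(m_{k,i_1},m_{k,i_2})}$, the paper simply observes that the repeated coordinate is determined by the other one, dropping a single factor $d^{m_{k,j_k}}$ and reaching the same conclusion via $\sum_{j}\frac{C}{c}d^{-m_{k,j}}\to 0$.
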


\begin{proof}
To do so, it is sufficient to prove that 
\[\limsup_{k\rightarrow+\infty}\frac{\textup{Card}\left(\mathsf{C}_k^*\setminus\mathsf{Cb}_0\right)}{\textup{Card}\left(\mathsf{C}_k^*\right)}=0~.\]
The set $\mathsf{C}_k^*\setminus\mathsf{Cb}_0$ coincides with the union over the $j_k$ of the set $\bigcap_{i\neq j_k}\mathsf{C}_{k,i}^*$ intersected with $\bigcup_{i\neq j_k}\{\Theta\in\mathsf{Cb}\, ; \ \Theta_{j_k}=\Theta_i\}$. As a consequence, 
\[\textup{Card}\left(\mathsf{C}_k^*\setminus\mathsf{Cb}_0\right)\leq C \sum_{j_k} d^{\sum m_{k,i}-m_{k,j_k}}\]
where $C$ depends only on $d$. Hence
\[\frac{\textup{Card}\left(\mathsf{C}_k^*\setminus\mathsf{Cb}_0\right)}{\textup{Card}\left(\mathsf{C}_k^*\right)}\leq \sum_{j_k} \frac{C}{c}d^{-m_{k,j_k}}\longrightarrow0~,\]
as $k\rightarrow+\infty$, which ends the proof.
\end{proof}

We now give a more precise description of the spaces $\mathsf{Cb}$ and $\mathsf{S}$ and of the measure $\mu_{\mathsf{Cb}}$ we will need in our proof. We refer to~\cite[\S7.1]{favredujardin} for more details.

The set $\mathsf{Cb}$ has $\mathsf{Cb}_0$ as an open and dense subset. Moreover, $\mathsf{Cb}_0$ can also be seen as an open subset of the set $\mathsf{S}^{d-1}$. The set $\mathsf{S}$ is a translation manifold of dimension $1$ which has $\lfloor d/2\rfloor$ connected components, each of them being isomorphic to $\R/\Z$. 

We endow each of these components with a copy of the probability measure $\lambda_{\R/\Z}$ and let $\lambda_{\mathsf{S}}$ be the probability measure which is proportional to the obtained finite measure.
Notice that $\lambda_{\mathsf{S}}^{\otimes(d-1)}(\mathsf{Cb}_0)>0$ and let $\mu_{\mathsf{Cb}_0}$ be the measure 
\[\mu_{\mathsf{Cb}_0}:=\frac{1}{\lambda_{\mathsf{S}}^{\otimes(d-1)}(\mathsf{Cb}_0)}\mathbf{1}_{\mathsf{Cb}_0}\cdot\lambda_{\mathsf{S}}^{\otimes(d-1)}.\]
The measure $\mu_{\mathsf{Cb}}$ is then the trivial extension of $\mu_{\mathsf{Cb}_0}$ to $\mathsf{Cb}$.

\subsection{Equidistribution results: Theorems~\ref{tm:distrib} and~\ref{tm:distribpara}}

For any $0\leq i\leq d-2$ and any $0\leq n<m$, we let 
\[\mathsf{S}(m,n):=\{\{\alpha,\alpha'\}\in\mathsf{S}\, ; \  d^m\alpha=d^n\alpha\} \ \textup{and} \ \mathsf{S}^*(m,n):=\mathsf{S}(m,n)\setminus\mathsf{S}(m-n,0) ~.\]
For any $i$, pick any sequences $0<n_{k,i}<m_{k,i}$ such that $m_{k,i}\rightarrow\infty$ as $k\rightarrow\infty$ and let
\[\mathsf{S}_{k}^*:=\prod_{i=0}^{d-2}\mathsf{S}^*(m_{k,i},n_{k,i})\subset\mathsf{S}^{d-1}.\]
Finally, we let $m_k$ be the probability measure on $\mathsf{S}^{d-1}$ which is supported on $\mathsf{S}_k^*$ and we let $\lambda_{\mathsf{S}}$ be the natural probability measure on $\mathsf{S}$.

As in~\cite[\S7.1]{favredujardin}, for any collection of open intervals $I_0,\ldots,I_{d-2}\subset\R/\Z$, and any collection of integers $q_0,\ldots,q_{d-2}\in\{1,\ldots,\lfloor d/2\rfloor\}$, wet let 
\[\mathsf{I}_i(q):=\left\{\{\alpha,\alpha'\}\in\mathsf{S}\, ; \ \{\alpha,\alpha'\}\subset I_i\cup\left(I_i+\frac{q_i}{d}\right)\right\}\]
and we can define an open set of $\mathsf{S}^{d-1}$ by setting
\[U(I,q):= \left\{\Theta\in\mathsf{S}^{d-1}\, ; \ \Theta_i\in \mathsf{I}_i(q_i)\right\},\]
where $I:=(I_0,\ldots,I_{d-2})$ and $q=(q_0,\ldots,q_{d-2})$. Notice that such open sets define with small intervals span the topology of $\mathsf{S}^{d-1}$.

We rely on the following key intermediate result.

\begin{lemma}\label{lm:reduction}
The sequence $(m_k)$ is equidistributed with respect to $\lambda_{\mathsf{S}}^{d-1}$ on $\mathsf{S}^{d-1}$. More precisely, if $I=(I_0,\ldots,I_{d-2})$ is a $(d-1)$-tuple of intervals and any $(d-1)$-tuple of integers $q=(q_0,\ldots,q_{d-2})$ with $1\leq q_i\leq \lfloor d/2\rfloor$, we have
\[\lim_{k\rightarrow\infty}m_k(U(I,q))=\lambda_{\mathsf{S}}^{\otimes(d-1)}(U(I,q))=\prod_{i=0}^{d-2}\lambda_{\mathsf{S}}(I_i(q_i)).\]
\end{lemma}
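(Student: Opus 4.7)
My plan is to exploit the product structure of both $\mathsf{S}_k^*$ and the open sets $U(I,q)$ in order to reduce the lemma to a one-dimensional equidistribution statement in $\R/\Z$, and then conclude with Lemma~\ref{lm:equidsitrcircle}. First, since $\mathsf{S}_k^*=\prod_{i=0}^{d-2}\mathsf{S}^*(m_{k,i},n_{k,i})$ is a Cartesian product, the uniform probability measure $m_k$ factors as $m_k=\bigotimes_{i=0}^{d-2} m_k^{(i)}$, where $m_k^{(i)}$ is the uniform probability measure on the factor $\mathsf{S}^*(m_{k,i},n_{k,i})\subset \mathsf{S}$. As $U(I,q)=\prod_i \mathsf{I}_i(q_i)$ and $\lambda_\mathsf{S}^{\otimes(d-1)}$ is a product measure, the lemma will follow once I establish, for every open interval $I\subset\R/\Z$ and every integer $q\in\{1,\ldots,\lfloor d/2\rfloor\}$, the convergence $m_k^{(i)}(\mathsf{I}(q))\to \lambda_\mathsf{S}(\mathsf{I}(q))$ when $m_{k,i}\to\infty$.

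I then fix such an index and, to lighten notations, write $m:=m_{k,i}$, $n:=n_{k,i}$ and $\mu_{m,n}:=m_k^{(i)}$. Decompose $\mathsf{S}=\bigsqcup_{q'=1}^{\lfloor d/2\rfloor}\mathsf{S}^{(q')}$ and parametrize each component by $\alpha\in\R/\Z$ via $\alpha\mapsto\{\alpha,\alpha+q'/d\}$ (a bijection for $q'<d/2$, and a $2$-to-$1$ cover with identification $\alpha\sim\alpha+1/2$ when $q'=d/2$). Since $n\geq 1$, both conditions $d^m\alpha=d^n\alpha$ and $d^{m-n}\alpha=\alpha$ are invariant under $\alpha\mapsto\alpha+q'/d$, so the set $\mathsf{S}^*(m,n)\cap\mathsf{S}^{(q')}$ pulls back via the parametrization to
\[A_{m,n}:=\bigl\{j/(d^m-d^n):0\leq j<d^m-d^n\bigr\}\setminus\bigl\{j/(d^{m-n}-1):0\leq j<d^{m-n}-1\bigr\}.\]
Both sets displayed above are of the form $\{j/N\}_{0\leq j<N}$ and thus equidistribute in $\R/\Z$ as $N\to\infty$; moreover the ratio of their cardinalities is $(d^{m-n}-1)/(d^m-d^n)=d^{-n}\leq d^{-1}<1$, so Lemma~\ref{lm:equidsitrcircle} yields that the uniform probability measure on $A_{m,n}$ converges weakly to $\lambda_{\R/\Z}$ as $m\to\infty$. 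The case $q'=d/2$ is handled identically after pushing forward under $\alpha\mapsto\alpha\bmod 1/2$.

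It remains to compare the total masses assigned to each component. A direct count gives $|\mathsf{S}^*(m,n)\cap\mathsf{S}^{(q')}|=(1-d^{-n})(d^m-d^n)$ for $q'<d/2$ and half of that value for $q'=d/2$, so the fraction of $\mathsf{S}^*(m,n)$ lying in each $\mathsf{S}^{(q')}$ is a constant depending only on $d$ and $q'$, which matches $\lambda_\mathsf{S}(\mathsf{S}^{(q')})$ through the normalization of \cite[\S 7.1]{favredujardin}. Combining this component-wise normalization with the intra-component equidistribution from the previous paragraph, and noting that $\mathsf{I}(q)\subset \mathsf{S}^{(q)}$ corresponds to the open interval $I$ in the $\alpha$-parametrization, I conclude $\mu_{m,n}(\mathsf{I}(q))\to \lambda_\mathsf{S}(\mathsf{I}(q))$. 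The only step requiring genuine care is this component-mass identification; the rest is a routine Riemann-sum equidistribution argument.
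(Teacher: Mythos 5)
Your overall route is the same as the paper's: use the product structure of $m_k$ and of $U(I,q)$ to reduce, via Fubini, to the equidistribution of a single factor on $\mathsf{S}$, and then conclude with Lemma~\ref{lm:equidsitrcircle}; you are simply more explicit, working component by component. However, one step is false as written: the condition $d^{m-n}\alpha=\alpha$ is \emph{not} invariant under $\alpha\mapsto\alpha+q'/d$. Since $m-n\geq1$ it transforms into $d^{m-n}\alpha=\alpha+q'/d$; indeed two distinct angles with the same image under multiplication by $d$ can never both be periodic. Consequently your $A_{m,n}$ is not the pullback of $\mathsf{S}^*(m,n)\cap\mathsf{S}^{(q')}$: removing $\mathsf{S}(m-n,0)$ means discarding every pair one of whose \emph{two} elements is periodic, so in the $\alpha$-coordinate one must delete $\{j/(d^{m-n}-1)\}\cup\bigl(\{j/(d^{m-n}-1)\}-q'/d\bigr)$, that is $2(d^{m-n}-1)$ points per full component rather than $d^{m-n}-1$, and your exact counts must be corrected accordingly. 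This slip is repairable: the corrected excluded set is still contained in $\{j/(d^m-d^n)\}$, still equidistributed, and of proportion $2d^{-n}\leq 2/d<1$ for $d\geq3$, so Lemma~\ref{lm:equidsitrcircle} applies exactly as you intend.

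The more serious issue is the step you yourself single out as ``requiring genuine care'': the identification of the per-component fractions with the $\lambda_{\mathsf{S}}$-masses is asserted, not verified. Your own figures show that for $d$ even the component $q'=d/2$ carries asymptotically \emph{half} as many points of $\mathsf{S}^*(m_{k,i},n_{k,i})$ as each component $q'<d/2$, so the limit of $m_k^{(i)}$ gives it weight $1/(d-1)$ and the others $2/(d-1)$. This matches $\lambda_{\mathsf{S}}$ only if $\lambda_{\mathsf{S}}$ is normalized by the length of the components of the translation manifold $\mathsf{S}$ (the $q'=d/2$ circle having half the length of the others), and \emph{not} if each of the $\lfloor d/2\rfloor$ components is given equal mass $1/\lfloor d/2\rfloor$, which is what the paper's description of $\lambda_{\mathsf{S}}$ literally says; under the equal-weight reading your claimed limit would fail for even $d\geq4$ (test $q_i=d/2$ with small intervals). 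So the appeal to ``the normalization of [FD~\S7.1]'' is precisely the point that needs to be pinned down. To be fair, the paper's own proof is no more careful here: it only records crude cardinality bounds for $\mathsf{S}(m,n)$ and $\mathsf{S}(m-n,0)$, declares that $\lambda_{\mathsf{S}}$ is the renormalization of $\lfloor d/2\rfloor$ copies of $\lambda_{\R/\Z}$, and invokes Lemma~\ref{lm:equidsitrcircle} without discussing how the mass splits among components. In summary: same strategy as the paper, with one concrete but fixable error (the invariance claim and the resulting counts) and the genuinely delicate normalization point left unproved.
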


\begin{proof}
As the measure $m_k$ is a product measure $m_k=m_{k,0}\otimes\cdots\otimes m_{k,d-1}$, where $m_{k,i}$ is the probability measure equidistributed on the set $S^*(m_{k,i},n_{k,i})$, by Fubini Theorem, is is sufficient to prove that $m_{k,i}$ is equidistributed with respect to $\lambda_{\mathsf{S}}$ as $k\rightarrow+\infty$.

Let $d_{k}:=\textup{Card}(\mathsf{S}^*(m_{k,i},n_{k,i}))$. Since for any $m>n>0$,
\[d^{m}-d^{n}\leq \textup{Card}(\mathsf{S}(m,n))\leq  \lfloor d/2\rfloor \cdot (d^{m}-d^{n})~,\]
we find 
\begin{eqnarray*}
d_k & = & \textup{Card}(\mathsf{S}(m_{k,i},n_{k,i}))-\textup{Card}(\mathsf{S}(m_{k,i}-n_{k,i},0))\\
& \geq & d^{m_{k,i}}-d^{n_{k,i}}- \lfloor d/2\rfloor \cdot (d^{m_{k,i}-n_{k,i}}-1)\\
& \geq & d^{m_{k,i}}-d^{n_{k,i}}- \frac{d/2+1}{d} \cdot (d^{m_{k,i}}-d^{n_{k,i}})\\
& \geq & \left(1- \frac{d/2+1}{d}\right)\cdot (d^{m_{k,i}}-d^{n_{k,i}}).
\end{eqnarray*}
Notice that $1- \frac{d/2+1}{d}>0$. Now, the natural measure $\lambda_{\mathsf{S}}$ is the renormalization of $\lfloor d/2\rfloor$ copies of $\lambda_{\R/\Z}$, hence we can directly apply Lemma~\ref{lm:equidsitrcircle}. This gives the equidistribution of $m_{k,i}$ with respect to $\lambda_{\mathsf{S}}$, as $k\rightarrow+\infty$ and the proof is complete.
\end{proof}

As a consequence, using classical measure theory, we easily get the following:

\begin{corollary}\label{cor:nocollapsing}
The sequence $(m_k)$ converges towards $\lambda^{\otimes(d-1)}_{\mathsf{S}}$ in the weak sense of probability measures on $\mathsf{S}^{d-1}$.
\end{corollary}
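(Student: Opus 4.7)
The strategy is to reduce the assertion to weak convergence of one-dimensional marginals via the product structure. The key observation is that the proof of Lemma~\ref{lm:reduction} actually establishes more than its statement: for each $i$, the marginal $m_{k,i}$ of $m_k = m_{k,0} \otimes \cdots \otimes m_{k,d-2}$ is equidistributed on $\mathsf{S}$ with respect to $\lambda_{\mathsf{S}}$, meaning $m_{k,i}(J) \to \lambda_{\mathsf{S}}(J)$ for every open interval $J$ contained in one of the $\lfloor d/2 \rfloor$ connected components of $\mathsf{S}$.

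The first step is to upgrade this to weak convergence $m_{k,i} \to \lambda_{\mathsf{S}}$ on $\mathsf{S}$. Since $\lambda_{\mathsf{S}}$ is absolutely continuous with respect to Lebesgue measure on each component of $\mathsf{S}$, it is diffuse, so every open interval is a continuity set. Any open subset $U$ of $\mathsf{S}$ is a countable disjoint union of open intervals; hence, passing termwise to the limit and using countable additivity yields $\liminf_k m_{k,i}(U) \geq \lambda_{\mathsf{S}}(U)$, which, by the Fact preceding Lemma~\ref{lm:equidsitrcircle}, amounts to weak convergence.

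The second step is to deduce weak convergence of the product measures from that of the marginals. Since $\mathsf{S}^{d-1}$ is compact metric, the Stone--Weierstrass theorem allows any continuous function $f : \mathsf{S}^{d-1} \to \R$ to be approximated uniformly by finite linear combinations of tensor products $f_0 \otimes \cdots \otimes f_{d-2}$ of continuous functions on $\mathsf{S}$. Applying Fubini's theorem and the one-dimensional weak convergence coordinate by coordinate then gives
\[\int_{\mathsf{S}^{d-1}} f \, dm_k \; \longrightarrow \; \int_{\mathsf{S}^{d-1}} f \, d\lambda_{\mathsf{S}}^{\otimes(d-1)}.\]
No step is expected to pose a genuine difficulty: the dynamical content all lies in Lemma~\ref{lm:reduction}, and what remains is a routine measure-theoretic argument on a compact metric space.
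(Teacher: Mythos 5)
Your proposal is correct, and its inputs are exactly those of the paper: the equidistribution of the one-dimensional factors established inside the proof of Lemma~\ref{lm:reduction} (via Lemma~\ref{lm:equidsitrcircle}) together with the product structure $m_k=m_{k,0}\otimes\cdots\otimes m_{k,d-2}$. The paper dispatches the deduction as ``classical measure theory,'' and the route it implicitly has in mind is the portmanteau criterion applied to the open sets $U(I,q)$ appearing in the statement of Lemma~\ref{lm:reduction}, which generate the topology of $\mathsf{S}^{d-1}$: one writes an arbitrary open set as a countable union of such sets and obtains the $\liminf$ inequality directly in the product space. You instead stay at the level of the marginals --- which is where the proof of the lemma already reduces matters by Fubini --- upgrade interval-equidistribution to weak convergence $m_{k,i}\to\lambda_{\mathsf{S}}$ on each factor, and then tensorize by Stone--Weierstrass and a $3\varepsilon$-argument (legitimate here precisely because the $m_k$ are product measures; weak convergence of marginals alone would not suffice). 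The two closings are equally routine and rest on the same dynamical content, so nothing is gained or lost either way. One microscopic point in your first step: an open subset of a circle component of $\mathsf{S}$ need not be a disjoint union of open arcs (it may be the whole component); this is harmless, since the required $\liminf$ inequality for a full component follows by exhausting it from inside by arcs of $\lambda_{\mathsf{S}}$-measure arbitrarily close to that of the component.
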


We now can end the proof of Theorem~\ref{tm:distrib}.

\begin{proof}[Proof of Theorem~\ref{tm:distrib}]
Write again $\lambda:=\lambda^{\otimes(d-1)}_{\mathsf{S}}$. Recall that $\nu_k$ is the probability measure equidistributed on $\mathsf{C}_k^*$ and $\mu_k$ is the measure defined in Theorem~\ref{tm:distrib}. By Theorem~\ref{tm:kiwipcf}, one has $e_*(\nu_k)=\mu_k$ for any $k$. Notice also that $\mu_\bif=e_*(\mu_{\mathsf{Cb}})$, by Theorem~\ref{tm:landing}. According to  Theorem~\ref{tm:contlanding} and Theorem~\ref{tm:measure}, it is sufficient to prove that $(\nu_k)$ converges weakly to $\mu_{\mathsf{Cb}}$.

First, remark that, since $\mu_{\mathsf{Cb}}$ is the trivial extension of $\mu_{\mathsf{Cb}_0}$ to $\mathsf{Cb}$, Lemma~\ref{lm:nocollapsing} implies that it is actually sufficient to prove that $\nu_k$ converges weakly towards $\mu_{\mathsf{Cb}_0}$ to conclude.
Let $K$ be any compact subset of $\mathsf{Cb}_0$. Then
\[\nu_k(K)-\mu_{\mathsf{Cb}_0}(K)=\frac{m_k(K)}{m_k(\mathsf{Cb}_0)}-\frac{\lambda(K)}{\lambda(\mathsf{Cb}_0)}.\]
According to Corollary~\ref{cor:nocollapsing} and to the Fact of Section~\ref{sec:measuretheory}, for any $\epsilon>0$, there exists $k_0\geq1$ such that for any $k\geq k_0$,
\[m_k(K)\leq \lambda(K)+\epsilon \ \text{and} \ m_k(\mathsf{Cb}_0)\geq \lambda(\mathsf{Cb}_0)-\epsilon\]
since $\mathsf{Cb}_0$ is open and $K$ is compact in $\mathsf{Cb}_0$, hence in $\mathsf{Cb}$. In particular, for $k\geq k_0$, we find
\[\nu_k(K)-\mu_{\mathsf{Cb}_0}(K)\leq \epsilon\cdot\frac{\lambda(\mathsf{Cb}_0)+\lambda(K)}{\lambda(\mathsf{Cb}_0)(\lambda(\mathsf{Cb}_0)-\epsilon)}.\]
Taking the limsup as $k\rightarrow\infty$ and then making $\epsilon\rightarrow0$ gives 
\[\limsup_{k\rightarrow\infty}\nu_k(K)\leq\mu_{\mathsf{Cb}_0}(K).\]
This ends the proof, using again the Fact of Section~\ref{sec:measuretheory}.
\end{proof}

The proof of Theorem~\ref{tm:distribpara} is similar, so we omit it. As observed in the introduction, we lack a precise control on the cardinality of combinatorics that land at a given parabolic polynomial to have a better result in the spirit of Theorem~\ref{tm:distribparMand}.

 An easy estimate follows from Theorem~\ref{tm:rational} up to considering all the possible permutations of the given combinatorics (a priori, two permuted combinatorics may land at the same parameter). Given such a polynomial $P$ with periodic parabolic cycles of exact periods $k_i$ and combinatorial periods $n_i$, the number $\mathcal{N}_{\mathsf{Cb}}(P)$ is bounded above:
\[\mathcal{N}_{\mathsf{Cb}}(P)\leq \left((d-1)!\right)^2\cdot\prod_{i=0}^{d-2} \left(k_i\cdot \max\left\{2,\frac{n_i}{k_i}\right\}\right).\]
Following \cite{Schleicher}, one can expect that the only angles which actually belong to $\Theta_i$ are the \emph{characteristic rays} of the parabolic cycle whose parabolic basin contains $c_i$, i.e. the rays separating the petals containing $P(c_i)$ from the other petals clustering at the same point of the considered parabolic cycle. This would give an exact formula for $\mathcal{N}_{\mathsf{Cb}}(P)$.

\part{In the quadratic anti-holomorphic family}\label{Part:anti}

\section{The anti-holomorphic quadratic family}
\subsection{Anti-holomorphic polynomials and the Tricorn}
We now aim at studying the family of \emph{quadratic anti-holomorphic} dynamical systems, i.e. the family
\[f_c(z):=\bar{z}^2+c~, \ z\in\C~,\]
parametrized by $c\in\C$. It is classical to proceed by analogy with the holomorphic case, i.e. to define the filled Julia set of $f_c$ and the Julia set of $f_c$ by letting 
\[\K_c:=\{z\in\C \, ; \ (f_c^n(z))_n \ \text{is bounded}\} \ \text{ and } \ \J_c:=\partial\K_c~.\]
We also define the \emph{Tricorn} as the set 
\[\Mand_2^*:=\{c\in\C \, ; \ (f_c^n(0))_n \ \text{is bounded}\}~.\]
Again, as in the holomorphic case, for $n>k>0$, we let
\[\Per(n,k):=\{c\in\C \, ; \ f_c^n(0)=f_c^k(0)\} \ \ \text{and} \ \ \Per^*(n,k):=\{c\in \Per(n,k) \, ; \ f_c^{n-k}(0)\neq0\}.\]

\begin{definition}
We say that a parameter $c\in\bigcup_{n>k\geq1}\Per^*(n,k)$ is a \emph{Misiurewicz} parameter.
\end{definition}
Notice that we chose this definition by analogy to the holomorphic case. We now want to address the following question.

\begin{question}
Is the set $\Per(n,k(n))$ (resp. the set $\Per^*(n,k(n))$) finite and can we describe its distribution as $n\to\infty$, for any sequence $0\leq k(n)<n$?
\end{question}

The rest of the paper gives a partial answer to the above question.

~

For convenience, define the family of quadratic anti-holomorphic polynomials
\[f_\lambda(z):=\bar{z}^2+(a+ib)^2~, \ z\in\C~,\]
for $\lambda=(a,b)\in\C^2$. A classical observation is that $f_\lambda\circ f_\lambda$ defines a family of holomorphic degree $4$ polynomials $P_\lambda$. An easy computation shows that for $\lambda=(a,b)\in\C^2$, 
\[P_\lambda(z)=z^4+2(a-ib)^2z^2+(a+ib)^2+(a-ib)^4~, \ z\in\C.\]
This family has (complex) dimension $2$. The critical points of $P_\lambda$ are exactly $c_0:=0$, $c_1:=ia+b$ and $c_2=-(ia+b)$. It is also easy to check that for any $\lambda=(a,b)\in\C^2$, we have $c_1=c_2$ if and only if $c_0=c_1$ if and only if $c_0=c_2$ if and only if $b=-ia$.

\begin{lemma}\label{lm:complexTricorn}
The family $(P_\lambda)_{\lambda\in\C^2}$ projects in the moduli space $\mathcal{P}_4$ to the surface $\mathcal{X}:=\{(P,c_0,c_1,c_2)\in\mathcal{P}_4\, ; \ c_1=-c_2\}$. Moreover, the projection $\pi:\C^2\longrightarrow\mathcal{X}$ is a degree $6$ branched covering ramifying exactly at $\lambda=(0,0)$. Moreover, if $\lambda=(a,b)\in \R^2$ then $P_\lambda$ is the only real representative of $\{P_\lambda\}$ in the family.
\end{lemma}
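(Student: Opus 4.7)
The plan is to use the coordinates $u := a - ib$ and $v := a + ib$ on $\C^2$, in which
\[
P_\lambda(z) = z^4 + 2u^2 z^2 + v^2 + u^4, \qquad c_0 = 0, \quad c_1 = iu, \quad c_2 = -iu.
\]
Differentiating gives $P_\lambda'(z) = 4z(z^2 + u^2)$, so the critical points are $0, \pm iu$ and in particular $c_1 + c_2 = 0$; this is an affine-invariant relation which, after normalizing $c_0 = 0$, characterizes $\mathcal{X}$, so $\pi(\C^2) \subset \mathcal{X}$.

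To compute the degree, I would normalize a generic class in $\mathcal{X}$: a translation reduces to $c_0 = 0$ and then $P(z) = z^4 + Az^2 + C$; the residual affine symmetries preserving this form are the linear maps $z \mapsto \alpha z$ with $\alpha^3 = 1$, acting as $(A, C) \mapsto (\alpha^{-1}A, \alpha C)$, giving three normal-form representatives per class. For each such $(A, C)$, solving $A = 2u^2$ and $C = v^2 + u^4$ yields four solutions $(u, v) \in \C^2$, of which exactly two (those with the correct sign of $u$ dictated by $c_1 = iu$, both signs of $v$ being allowed) respect the marking. Hence $\deg \pi = 3 \cdot 2 = 6$.

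For the ramification I would exhibit the deck group of $\pi$. In the $(u, v)$-coordinates, the commuting maps $\sigma(u, v) := (u, -v)$ and $\tau(u, v) := (\omega u, \omega^2 v)$, with $\omega := e^{2i\pi/3}$, satisfy $\pi \circ \sigma = \pi \circ \tau = \pi$: direct substitution shows that $\sigma$, which in $(a, b)$-coordinates reads $\lambda \mapsto (-ib, ia)$, leaves both $P_\lambda$ and its marking unchanged, while $\tau$ implements the affine conjugation $z \mapsto \omega z$ on $P_\lambda$ and scales the marking by $\omega$. They generate $G \simeq \Z/2 \times \Z/3 \simeq \Z/6$, which by the degree count is the full deck group. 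All nontrivial elements of $G$ other than $\sigma$ fix only $(u, v) = (0, 0)$, which corresponds to $\lambda = (0, 0)$ and to the unique singular point $(z^4, 0, 0, 0) \in \mathcal{P}_4$, where all six preimages coalesce. The delicate point is $\sigma$ itself, which pointwise fixes the line $\{v = 0\} = \{a = -ib\}$; along this line $P_\lambda = (z^2 + u^2)^2$ is a perfect square, and one must verify that in the local structure of $\mathcal{X}$ the apparent branching along this line is absorbed into the normal-form identifications, so that $\pi$ is genuinely branched only at the origin. This is the main technical obstacle.

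Finally, for the real statement, $\sigma$ sends $(a, b) \in \R^2$ to $(-ib, ia)$, which lies in $\R^2$ only at the origin, and $\tau, \tau^2$ multiply the coefficient pair $(A, C)$ by non-real cube roots of unity, carrying real $(A, C)$ to non-real ones. Hence among the six preimages of $\pi(\lambda)$ for $\lambda \in \R^2$, the unique one whose corresponding polynomial has real coefficients is $\lambda$ itself, which establishes that $P_\lambda$ is the only real representative of its class in the family.
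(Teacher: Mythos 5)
Your computation of the image and of the degree is sound and is essentially the paper's own argument: normalizing $c_0=0$ and monic, the residual conjugacies are $z\mapsto\alpha z$ with $\alpha^3=1$, and the fibre through $(u,v)$ is $\{(\alpha u,\pm\alpha^2v):\alpha^3=1\}$, generically six points. The problems are with the two remaining assertions. For the ramification you explicitly leave the decisive step open ("one must verify\dots the main technical obstacle"), and the verification you hope for cannot succeed. Since $P_\lambda$ depends on $(u,v)$ only through $(u,v^2)$, the involution $\sigma(u,v)=(u,-v)$ fixes the marked polynomial itself (as you observe), so $\pi$ factors through the double cover $(u,v)\mapsto(u,v^2)$, branched along $\{v=0\}$. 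Concretely, your own fibre description gives: $\pi(u',v')=\pi(u,v)$ iff $u'=\alpha u$ and $v'^2=\alpha v^2$ for some $\alpha^3=1$; hence over a point with $v=0$ and $u\neq 0$ the fibre is $\{(\alpha u,0):\alpha^3=1\}$, only three points, each of local degree two. Nothing in the structure of $\mathcal{X}$ absorbs this: the residual identification is the action of the cube roots of unity, which is free away from the image of the origin. Note also that on this line $c_1=iu\neq0$, so it is disjoint from the critical-collision line $u=0$; in short, no argument along your lines (nor the count of deck transformations alone) establishes that $\pi$ is unramified off $(0,0)$, and this step is a genuine gap in your proof.

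The argument for the real statement is also flawed. First, the coefficients $(A,C)=(2u^2,u^4+v^2)$ of $P_\lambda$ are \emph{not} real for $\lambda\in\R^2$ (take $a=b=1$, which gives $A=-4i$), so the step "$\tau,\tau^2$ carry real $(A,C)$ to non-real ones" has no content, and "the unique preimage whose polynomial has real coefficients" is not what must be shown. Second, and decisively, $\tau$ preserves the real slice: for $\lambda\in\R^2$ one has $v=\bar u$, and $\tau(u,v)=(\omega u,\omega^2\bar u)=(\omega u,\overline{\omega u})$ is again a real parameter. Indeed $\tau$ is exactly the lift of the rotational symmetry $c\mapsto e^{2i\pi/3}c$ of the antiholomorphic family: the conjugacy $z\mapsto e^{2i\pi/3}z$ sends $f_c$ to $f_{e^{2i\pi/3}c}$, hence $P_\lambda$ to $P_{\tau(\lambda)}$, and it maps $c_1=iu$ to $i\omega u$, so the marking is respected. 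Thus $\lambda,\tau(\lambda),\tau^2(\lambda)$ are three distinct real parameters in the same fibre whenever $\lambda\in\R^2\setminus\{0\}$, and your reality test singles out $\lambda$ only because of the incorrect claim about $\tau$. So the last assertion is not proved by your symmetry count; this is precisely the delicate point of the lemma (it is also where the paper's own justification is briefest), and it requires a careful discussion of which elements of the deck group $\langle\sigma,\tau\rangle$ preserve the real locus and of what "real representative" is to mean, rather than the argument you give.
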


\begin{proof}
The surface $\mathcal{X}$ is irreducible and $\pi$ is proper. So it is surjective, hence has finite degree. Solving the equation $(\alpha z+ \beta) \circ P_\lambda = P_\lambda \circ (\alpha z+ \beta)$ implies $\beta=0$ and $\alpha^3=1$. Discussing the different cases gives six solutions and ones easily sees that if $(a,b)$ in $\R^2$ then $P_\lambda$ is the only real representative of $\{P_\lambda\}$ in the family. Moreover, it is clear that $\pi$ ramifies exactly at $\lambda=(0,0)$ (else $c_1\neq 0$).
\end{proof}

 We also will rely on the following which is essentially obvious.

\begin{lemma}\label{lm:complexTricorn2}
The map $\pi|_{\R^2}:\R^2\longrightarrow\mathcal{P}_4$ is a real-analytic homeomorphism onto its image. Moreover, if $\lambda\in \R^2$, then $f_\lambda(c_1)=f_\lambda(c_2)=c_0$.
\end{lemma}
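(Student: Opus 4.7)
The statement splits into two essentially independent parts: the claim that $\pi|_{\R^2}$ is a real-analytic homeomorphism onto its image, and the dynamical identity $f_\lambda(c_1)=f_\lambda(c_2)=c_0$ for $\lambda\in\R^2$. My plan is to handle them in this order, treating Lemma~\ref{lm:complexTricorn} as a black box throughout.

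\medskip

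Real-analyticity is immediate from the explicit formula for $P_\lambda$ given just before Lemma~\ref{lm:complexTricorn}: each coefficient of $P_\lambda$, as well as the marked critical points $c_0=0$, $c_1=ia+b$, $c_2=-(ia+b)$, depend polynomially on the two real variables $a,b$. Composing with the (real-analytic) quotient defining $\mathcal{P}_4$ gives a real-analytic map $\R^2\to\mathcal{P}_4$.

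\medskip

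For the homeomorphism property I separate injectivity from the topological statement. Injectivity is immediate from the last sentence of Lemma~\ref{lm:complexTricorn}: if $\lambda,\lambda'\in\R^2$ satisfy $\pi(\lambda)=\pi(\lambda')$, then $P_{\lambda'}$ is a real representative of the class $\{P_\lambda\}$ in the family $(P_\mu)_\mu$, so by the uniqueness clause $P_{\lambda'}=P_\lambda$, which forces $\lambda'=\lambda$ since the assignment $\mu\mapsto P_\mu$ is injective on $\R^2$ by inspection of the coefficients. To upgrade the continuous injection to a homeomorphism onto its image, I would use properness: $\pi:\C^2\to\mathcal{X}$ is a degree~6 branched covering between algebraic varieties, hence proper. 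Since $\R^2\subset\C^2$ is closed, the restriction $\pi|_{\R^2}$ remains proper, i.e. preimages of compacts in $\mathcal{P}_4$ are compact in $\R^2$. A continuous, injective, proper map between locally compact Hausdorff spaces is automatically a closed embedding, hence a homeomorphism onto its image. The only delicate point in this step is the correct invocation of Lemma~\ref{lm:complexTricorn} for injectivity; after that, the topology is routine.

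\medskip

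For the dynamical identity, one simply computes. For $\lambda=(a,b)\in\R^2$ one has $\overline{c_1}=\overline{ia+b}=-ia+b$, so
\[
\overline{c_1}^{\,2}=(b-ia)^2=b^2-2iab-a^2=-(a+ib)^2,
\]
which gives $f_\lambda(c_1)=\overline{c_1}^{\,2}+(a+ib)^2=0=c_0$. Since $c_2=-c_1$ and squaring kills the sign, $\overline{c_2}^{\,2}=\overline{c_1}^{\,2}$, hence $f_\lambda(c_2)=c_0$ as well. This part of the lemma is elementary and not the substance of the statement.
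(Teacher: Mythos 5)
Your treatment of real-analyticity, the properness upgrade (continuous injective proper map into a locally compact Hausdorff space is a homeomorphism onto its image), and the closing computation $f_\lambda(c_1)=f_\lambda(c_2)=c_0$ are all correct; the paper itself dismisses the last point as a direct computation and leaves the topological upgrade implicit, so your properness argument is a welcome addition rather than a deviation.

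The injectivity step, however, contains a genuine error. You conclude from the uniqueness clause of Lemma~\ref{lm:complexTricorn} that $P_{\lambda'}=P_\lambda$ as polynomials and then finish by asserting that $\mu\mapsto P_\mu$ is injective on $\R^2$ ``by inspection of the coefficients''. That assertion is false: the coefficients of $P_\lambda$ depend only on $(a+ib)^2$ and $(a-ib)^2$, so $P_{-\lambda}=P_\lambda$ identically, and equality of polynomials only yields $\lambda'=\pm\lambda$. The case $\lambda'=-\lambda$ is precisely the delicate one, because $\pi(-\lambda)$ is the class of the \emph{same} polynomial but with the markings of $c_1=ia+b$ and $c_2=-(ia+b)$ exchanged. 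To rule out $\pi(-\lambda)=\pi(\lambda)$ for $\lambda\neq0$ you must use the structure of the affine self-conjugacies of $P_\lambda$ established in the proof of Lemma~\ref{lm:complexTricorn}: they are of the form $z\mapsto\alpha z$ with $\alpha^3=1$, and no such map can send $c_1$ to $c_2=-c_1$, since that would force $\alpha=-1$. This is exactly the content of the paper's one-line proof (``$P_\lambda$ and $P_{-\lambda}$ are affine conjugate, but the conjugacy exchanges $c_1$ with $c_2$''), and it is the piece missing from your argument. (If instead one reads the last sentence of Lemma~\ref{lm:complexTricorn} as saying outright that $\lambda$ is the only parameter of $\R^2$ in the fiber of $\pi$ over $\{P_\lambda\}$, then injectivity is immediate and your additional — and incorrect — step is superfluous; as written, though, your chain of deductions does not close.)
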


\begin{proof}
If $\lambda\in\R^2\setminus\{0\}$, the polynomials $P_\lambda$ and $P_{-\lambda}$ are affine conjugate, but the conjugacy exchanges $c_1$ with $c_2$ and the conclusion follows from Lemma~\ref{lm:complexTricorn}. The fact that $f_\lambda(c_1)=f_\lambda(c_2)=c_0$ follows from a direct computation.
\end{proof}

\subsection{The combinatorial space}

For the material of this section, we follow~\cite{Nakane}. Let $\psi_c$ be the B\"ottcher coordinate of the anti-holomorphic polynomial $f_c(z)=\bar{z}^2+c$, with $c\in\C$, i.e. the holomorphic map conjugating $f_c$ near $\infty$ to $\bar{z}^2$ near $\infty$ which is tangent to the identity. Let $\lambda=(a,b)\in\R^2$ be such that $c=(a+ib)^2$. It is known to be a biholomorphic map from $\{z\in\C \, ; \ g_{P_\lambda}(z)>g_{P_\lambda}(c_0)\}$ onto $\C\setminus\overline{D}(0,\exp(g_{P_\lambda}(c_0)))$. Moreover, we also have $\psi_c\circ P_\lambda=(\psi_c)^4$, i.e. $\psi_c=\phi_\lambda$ (recall that $\phi_\lambda$ is the B\"ottcher coordinate of $P_\lambda$). 

For $c\in\C$, we let $\Psi^*(c):=\psi_c(c)$, when $c\in \C\setminus\Mand_2^*$. The map 
\[\Psi^*:\C\setminus\Mand_2^*\longrightarrow\C\setminus\overline{\D}\]
is known to be a real-analytic isomorphism (see \cite{Nakane}).

For $\theta\in\R/\Z$, the \emph{external ray} of $\Mand_2^*$ of angle $\theta$ is the curve $\mathcal{R}^*(\theta)$ defined by
\[\mathcal{R}^*(\theta):=\left(\Psi^*\right)^{-1}\left(\{Re^{2i\pi\theta}\, ; \ 1<R<+\infty\}\right).\]

We will need the following.

\begin{lemma}\label{lm:impressionanti}
Let $c=(a+ib)^2\in\C$ with $\lambda:=(a,b)\in\R^2$. Assume that $c\in\mathcal{R}^*(\theta)$ and $\Psi^*(c)=e^{2r+2i\pi\theta}$ with $\theta\in\R/\Z$ and $r>0$. Then $\{P_\lambda\}\in\mathcal{S}_4^{\textup{vis}}$, $P_\lambda$ has simple critical points, i.e. $\Theta(P_\lambda)\in\mathsf{Cb}_0$ and $r=g_{P_\lambda}(c_0)=2g_{P_\lambda}(c_1)=2g_{P_\lambda}(c_2)$. Moreover, if $\Theta(P_\lambda)=(\Theta_0,\Theta_1,\Theta_2)$, then $\Theta(P_{-\lambda})=(\Theta_0,\Theta_2,\Theta_1)$ and
\[-2\Theta_0=4\Theta_1=4\Theta_2=\{\theta\}~.\]
\end{lemma}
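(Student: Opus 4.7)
The plan is to reduce the whole lemma to a single functional equation between the B\"ottcher coordinate $\psi_c=\phi_\lambda$ of $P_\lambda$ and the anti-holomorphic map $f_\lambda$, and then to exploit $f_\lambda$ as a symmetry of the dynamical rays of $P_\lambda$. Since $f_c=f_\lambda$ is anti-holomorphic of degree $2$ and $\psi_c$ is tangent to the identity at infinity and conjugates $f_c$ to $\bar z\mapsto\bar z^{\,2}$, one obtains on the domain of $\psi_c$ the identity
\[
\psi_c\bigl(f_\lambda(z)\bigr)=\overline{\psi_c(z)}^{\,2}.
\]
Squaring and conjugating recovers $\phi_\lambda\circ P_\lambda=\phi_\lambda^{\,4}$, so no new normalization is required; taking moduli gives the globally valid relation $g_{P_\lambda}\circ f_\lambda=2\,g_{P_\lambda}$.

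Combining this last relation with $f_\lambda(c_0)=c$ and $f_\lambda(c_1)=f_\lambda(c_2)=c_0$ from Lemma~\ref{lm:complexTricorn2}, I would immediately read off $g_{P_\lambda}(c_0)=g_{P_\lambda}(c)/2=r$ and $g_{P_\lambda}(c_1)=g_{P_\lambda}(c_2)=g_{P_\lambda}(c_0)/2=r/2$. All three critical points then escape, so $\{P_\lambda\}\in\mathcal{S}_4$; they are simple and pairwise distinct as soon as $\lambda\neq(0,0)$ by Lemma~\ref{lm:complexTricorn} and a one-line computation of $P_\lambda''$, which also yields $\Theta(P_\lambda)\in\mathsf{Cb}_0$.

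For the visibility statement and the combinatorial identities, I would first locate each critical value on an external ray of $P_\lambda$: the functional equation gives $\phi_\lambda(P_\lambda(c_0))=\overline{\psi_c(c)^{\,2}}$, which places $P_\lambda(c_0)$ on $R_{-2\theta}$, while $P_\lambda(c_i)=f_\lambda(c_0)=c$ lies on $R_\theta$ for $i=1,2$. Since each $c_i$ is a simple critical point with escaping orbit, the classical preimage description of rays produces exactly two external rays terminating at each $c_i$ with angles lying in the four preimages of the image angle under $\alpha\mapsto 4\alpha$. This gives $\{P_\lambda\}\in\mathcal{S}_4^{\textup{vis}}$ together with $4\Theta_1=4\Theta_2=\{\theta\}$. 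For the sharper relation $-2\Theta_0=\{\theta\}$, I would use that the functional equation implies $f_\lambda(R_\alpha)\subset R_{-2\alpha}$, so that the two rays landing at $c_0$ are both sent by $f_\lambda$ into the unique ray $R_\theta$ through $c=f_\lambda(c_0)$; they are thus contained in $f_\lambda^{-1}(R_\theta)=R_{-\theta/2}\cup R_{-\theta/2+1/2}$, forcing $\Theta_0=\{-\theta/2,\,-\theta/2+1/2\}$.

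The statement $\Theta(P_{-\lambda})=(\Theta_0,\Theta_2,\Theta_1)$ is then essentially tautological, since $((-a)+i(-b))^2=(a+ib)^2$ forces $P_{-\lambda}=P_\lambda$ as unmarked polynomials while the marking convention exchanges $c_1$ and $c_2$. The main substantive step is the identification of $\Theta_0$: the holomorphic relation $4\Theta_0=\{-2\theta\}$ alone admits several solutions among the four preimages of $-2\theta$, and it is really the anti-holomorphic symmetry $f_\lambda$, which has no counterpart in a purely holomorphic degree-$4$ family, that selects the opposite pair $\{-\theta/2,\,-\theta/2+1/2\}$.
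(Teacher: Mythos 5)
Your proposal is correct and follows essentially the same route as the paper: the Green-function relations $g_{P_\lambda}\circ f_\lambda=2g_{P_\lambda}$ and $g_{P_\lambda}=\log|\psi_c|$ give $r=g_{P_\lambda}(c_0)=2g_{P_\lambda}(c_1)=2g_{P_\lambda}(c_2)$ and place $P_\lambda(c_0)$ on the ray of angle $-2\theta$ and $P_\lambda(c_1)=P_\lambda(c_2)=c$ on the ray of angle $\theta$, after which one counts the rays terminating at the simple critical points and concludes the last assertion from the exchange of $c_1$ and $c_2$ under $\lambda\mapsto-\lambda$. Your explicit use of the semiconjugacy $\psi_c\circ f_\lambda=\overline{\psi_c}^{\,2}$ to force $-2\Theta_0=\{\theta\}$ (rather than the other preimage pair of $-2\theta$) is precisely the selection mechanism the paper handles, more tersely, by appealing to Kiwi's Lemma~3.9, so the two arguments match in substance and in level of detail.
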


\begin{proof}
As seen above, $P_\lambda$ has simple critical points if and only if $c\neq0$, which is the case here since $g_{P_\lambda}(c_0)>0$.
Notice that $P_\lambda(c_1)=P_\lambda(c_2)=f_\lambda(c_0)=c$ in our case, hence this point belongs to the ray of angle $\theta$ by assumption. Moreover, since $g_{P_\lambda}=\log|\phi_\lambda|=\log|\psi_c|$ on $\{g_{P_\lambda}>0\}$, we have $G(P_\lambda)=g_{P_\lambda}(c_0)=2g_{P_\lambda}(c_1)=2g_{P_\lambda}(c_2)$. As a consequence $g_{P_\lambda}(P_\lambda(c_0))=4g_{P_\lambda}(c_0)>G(P_\lambda)$ which means that $P_\lambda(c_0)=f_\lambda(c)$ belongs to the ray of angle $-2\theta$.

 Finally, let $\alpha$ and $\alpha+\frac{1}{2}$ be the angles so that $-2\alpha=-2(\alpha+\frac{1}{2})=\theta$. In particular, $4\alpha=4(\alpha+\frac{1}{2})=-2\theta$ and the two dynamical rays of angle $\alpha$ and $\alpha+\frac{1}{2}$ don't cross critical points of $P_\lambda$ until they terminate at $c_0$ by \cite[Lemma 3.9]{kiwi-portrait}. Since $c_1,c_2\notin\bigcup_{n\geq1}P_\lambda^{-n}\{c_0\}$, using again \cite[Lemma 3.9]{kiwi-portrait}, we have $2$ distinct rays terminating at $c_1$ (resp. at $c_2$), hence $P_\lambda\in\mathcal{S}_4^{\textup{vis}}$.
 
The last assertion follows immediately, since taking $P_{-\lambda}$ instead of $P_\lambda$ only exchanges the roles of $c_1$ and $c_2$.
\end{proof}

\subsection{Infinitely many combinatorics with non-trivial impression}

We now explain how to obtain Example~\ref{tm:non-trivial} from previous works and the above section. Remark that, when $c=(a+ib)^2\in\C$ and $\lambda:=(a,b)\in\R^2$, then both $P_\lambda$ and $P_{-\lambda}$ are the polynomial map $f_c^2$. As an immediate consequence of Lemma~\ref{lm:impressionanti}, we get the following.

\begin{corollary}\label{cor:impressionanti}
Let $c=(a+ib)^2\in\C$ with $\lambda:=(a,b)\in\R^2$, let $\theta\in\R/\Z$ and let $r>0$. Assume that $c\in\mathcal{R}^*(\theta)$ and $\Psi^*(c)=e^{2r+2i\pi\theta}$. Then $\mathcal{I}_{\mathcal{C}_4}(\Theta(P_\lambda))$ and $\mathcal{I}_{\mathcal{C}_4}(\Theta(P_{-\lambda}))$ both contain a copy of the prime end impression of the angle $\theta$ under the map $\Psi^*$.
\end{corollary}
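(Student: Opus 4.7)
The plan is to approximate each point of the prime-end impression by parameters lying outside the Tricorn, lift this approximating sequence to the family $(P_\lambda)_{\lambda\in\R^2}$ via the branched covering $\lambda=(a,b)\mapsto (a+ib)^2$, and then read off the limiting critical portrait directly from Lemma~\ref{lm:impressionanti}.

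Concretely, fix a point $c^*$ in the prime-end impression of $\theta$ under $\Psi^*$. By definition, there exists a sequence $c_n\in\C\setminus\Mand_2^*$ converging to $c^*$ such that, writing $\Psi^*(c_n)=e^{2r_n+2i\pi\theta_n}$, one has $r_n\to 0^+$ and $\theta_n\to\theta$. Since the interior of $\Mand_2^*$ contains $0$, we have $c^*\neq 0$; in particular, a local branch of the square root is well-defined near $c^*$. Pick one of the two square roots $\lambda^*=(a^*,b^*)\in\R^2$ of $c^*$, and lift $c_n$ along this branch to $\lambda_n=(a_n,b_n)\in\R^2$ so that $c_n=(a_n+ib_n)^2$ and $\lambda_n\to\lambda^*$.

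By Lemma~\ref{lm:impressionanti} applied to each $c_n$, the class $\{P_{\lambda_n}\}$ belongs to $\mathcal{S}_4^{\textup{vis}}$ and its critical portrait satisfies
\[
-2\Theta_0(P_{\lambda_n})=4\Theta_1(P_{\lambda_n})=4\Theta_2(P_{\lambda_n})=\{\theta_n\}.
\]
Since $\theta_n\to\theta$ and each of these finite sets is a preimage of $\theta_n$ under a fixed polynomial map of the circle, the portraits $\Theta(P_{\lambda_n})$ stay in a compact part of $\mathsf{Cb}$, and any subsequential limit $\Theta^*$ satisfies the analogous identities with $\theta$ in place of $\theta_n$. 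By Lemma~\ref{lm:impressionanti} applied to $\lambda$ (which depends on $\theta$ alone, up to labelling), $\Theta^*$ must coincide with either $\Theta(P_\lambda)$ or $\Theta(P_{-\lambda})$; which of the two occurs is determined by whether the critical points $c_1(\lambda_n),c_2(\lambda_n)$ converge to those of $\lambda$ in the given order or in the reverse order, and hence by the branch of the square root chosen above.

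Finally, since the projection $\pi:\C^2\to\mathcal{X}\subset\mathcal{P}_4$ is continuous, $\{P_{\lambda_n}\}$ converges to $\{P_{\lambda^*}\}$ in $\mathcal{P}_4$. Combined with the convergence of the critical portraits in $\mathsf{Cb}$ and with $\{P_{\lambda_n}\}\in\mathcal{S}_4^{\textup{vis}}$, the definition of the impression yields $\{P_{\lambda^*}\}\in\mathcal{I}_{\mathcal{C}_4}(\Theta(P_\lambda))$ for the appropriate choice of branch, and letting $c^*$ vary over the prime-end impression produces one of the two claimed copies. Replacing the chosen branch of the square root by its opposite (equivalently, replacing $\lambda_n$ and $\lambda^*$ by their opposites) yields the analogous copy inside $\mathcal{I}_{\mathcal{C}_4}(\Theta(P_{-\lambda}))$. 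The only delicate point is the identification of the limit portrait with precisely one of $\Theta(P_{\pm\lambda})$, which is guaranteed by the last sentence of Lemma~\ref{lm:impressionanti} describing how $\Theta(P_{-\lambda})$ is obtained from $\Theta(P_\lambda)$ by exchanging the two factors $\Theta_1$ and $\Theta_2$.
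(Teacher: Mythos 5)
Your proof is correct and follows exactly the route the paper leaves implicit when it calls the corollary an ``immediate consequence'' of Lemma~\ref{lm:impressionanti}: approximate a point of the prime-end impression by parameters $c_n\notin\Mand_2^*$ with angles $\theta_n\to\theta$, lift along the two real square-root branches, read off the portraits from the relations $-2\Theta_0=4\Theta_1=4\Theta_2=\{\theta_n\}$, pass to the limit in the compact space $\mathsf{Cb}$, and invoke Kiwi's definition of the impression, the swap $\Theta(P_{-\lambda})=(\Theta_0,\Theta_2,\Theta_1)$ ensuring that each of the two impressions receives a lift. The only step you assert rather than justify is that a limit portrait satisfying these relations for $\theta$ must equal $\Theta(P_\lambda)$ or $\Theta(P_{-\lambda})$; this is the elementary fact, used later in the paper, that an angle $\theta$ admits exactly two compatible critical portraits (the projection $\mathsf{Cb}^1\to\R/\Z$ is a degree-two cover), so the omission is harmless.
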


Let us now explain how to deduce Example~\ref{tm:non-trivial} from the above.

\begin{proof}[Constructing Example~\ref{tm:non-trivial}]
Pick any $\theta\in\R/\Z$ which is $k$-periodic under multiplication by $-2$, with $k>1$ odd. According to Theorem 1.1 of~\cite{inoumuk}, the impression $\textup{Imp}(\theta)$ contain a non-trivial real-analytic arc $\mathcal{C}$ such that $f_c$ has a $k$-periodic cycle of multiplier $1$.

By Corollary~\ref{cor:impressionanti} above, this implies that both
$\mathcal{I}_{\mathcal{C}_4}(\Theta(P_\lambda))$ and $\mathcal{I}_{\mathcal{C}_4}(\Theta(P_{-\lambda}))$ contains a non-trivial analytic arc along which the maps are not affine conjugate. As a consequence, $\mathcal{I}_{\mathcal{C}_4}(\Theta(P_\lambda))$ and $\mathcal{I}_{\mathcal{C}_4}(\Theta(P_{-\lambda}))$ are non-trivial. 

It also implies that for any $(P,c_0,c_1,c_2)$ in such an arc, $P=f_c^2$ has a $k$-periodic parabolic cycle of multiplier $1$. In particular, $c_0=0$ is attracted towards this parabolic cycle under iteration of $f_c$, hence under iteration of $P$. Notice that the parabolic basin of $P$ is $f_c$-invariant. Finally, by Lemma~\ref{lm:complexTricorn2}, $f_c(c_1)=f_c(c_2)=0$. In particular, all the critical points of $P$ are attracted towards its parabolic $k$-cycle.
\end{proof}

\section{A bifurcation measure for the Tricorn}\label{sec:antiholomorphic}

We now want to define a good bifurcation measure for the Tricorn $\Mand_2^*$ and prove equidistribution properties of specific parameters towards this bifurcation measure.

\subsection{Misiurewicz combinatorics}\label{sec:misanti}

We let $\mathsf{R}_{\textup{mis}}$ be the set of angles $\theta\in\R/\Z$ such that there exists integers $n>k>1$ for which $\theta$ satisfies $(-2)^{n-1}\theta=(-2)^{k-1}\theta$ and such that $(-2)^{n-k}\theta\neq\theta$.

We first want here to prove the following.

\begin{lemma}\label{lm:finiteintersection}
Pick $n>k>0$. Then the sets $\Per(n,k)$ and $\Per^*(n,k)$ are finite.
\end{lemma}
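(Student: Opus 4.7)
The plan is to reduce the finiteness to a standard statement about postcritically finite parameters in the auxiliary holomorphic degree $4$ family $(P_\lambda)_{\lambda\in\C^2}$.

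First I would observe the chain $\Per^*(n,k) \subset \Per(n,k) \subset \Per(2n,2k)$: the first inclusion is by definition, and the second follows because $f_c^n(0) = f_c^k(0)$ forces the tail orbit of $0$ to be periodic of period dividing $n-k$, so $f_c^{2n}(0) = f_c^{2k}(0)$ (since $2n-2k \equiv 0 \pmod{n-k}$ and $2k\geq k$). Thus it suffices to show $\Per(2n,2k)$ is finite. Given $c \in \Per(2n,2k)$, pick any $\lambda = (a, b) \in \R^2$ with $c = (a+ib)^2$, so $P_\lambda = f_c^2$ and the condition rewrites as $P_\lambda^n(c_0) = P_\lambda^k(c_0)$ with $c_0 = 0$. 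Using Lemma~\ref{lm:complexTricorn2}, i.e.\ $f_\lambda(c_1) = f_\lambda(c_2) = 0$, one checks $P_\lambda^m(c_j) = f_c^{2m-1}(0)$ for $j \in \{1,2\}$ and every $m \geq 1$; applying $f_c$ to both sides of $f_c^{2n}(0) = f_c^{2k}(0)$ then yields $P_\lambda^{n+1}(c_j) = P_\lambda^{k+1}(c_j)$ for $j = 1, 2$ (and also for $j=0$ by applying $P_\lambda$ once to the initial relation). In particular, the critically marked polynomial $(P_\lambda, c_0, c_1, c_2)$ lies in the algebraic subvariety
\[
W := \bigcap_{j=0}^{2}\bigl\{(P,c_0,c_1,c_2)\in\mathcal{P}_4 \, ; \ P^{n+1}(c_j) = P^{k+1}(c_j)\bigr\}
\]
of $\mathcal{P}_4$.

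The remaining step is to invoke that $W$ is $0$-dimensional, hence finite. This is a classical consequence of Thurston's rigidity for postcritically finite polynomials (the combinatorial types with preperiods at most $k+1$ and periods dividing $n-k$ are finite in number), or equivalently of an algebro-geometric dimension count via the Fatou--Shishikura inequality, in the spirit of Lemma~\ref{lm:finite}. Since $\pi:\C^2 \to \mathcal{X} \subset \mathcal{P}_4$ is a degree $6$ branched covering by Lemma~\ref{lm:complexTricorn}, the preimage $\pi^{-1}(W)$ is finite in $\C^2$, hence so is $\pi^{-1}(W) \cap \R^2$ and its image under the $2$-to-$1$ map $(a,b) \mapsto (a+ib)^2$. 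Since that image contains $\Per(2n,2k)$, we are done.

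The main obstacle is establishing the finiteness of $W$ rigorously: Thurston's rigidity plus an enumeration of combinatorial types is the conceptually cleanest route, but one may alternatively extract the same conclusion from the Favre--Gauthier/Dujardin--Favre algebro-geometric framework already invoked for Lemma~\ref{lm:finite}.
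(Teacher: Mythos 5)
Your argument is correct and follows essentially the same route as the paper: transfer the relation to the degree $4$ family via $P_\lambda=f_c^2$ and Lemma~\ref{lm:complexTricorn2}, observe that the resulting locus is an intersection of algebraic preperiodicity subvarieties of $\mathcal{P}_4$, conclude that this intersection is finite, and pull everything back through the degree $6$ branched covering $\pi$ of Lemma~\ref{lm:complexTricorn}. The only difference is at the finiteness step, where the paper avoids Thurston rigidity altogether: the intersection $\Per_0(n,k)\cap\Per_1(n+1,k+1)\cap\Per_2(n+1,k+1)$ consists of classes with all critical orbits finite, hence is an algebraic subvariety contained in the compact connectedness locus $\mathcal{C}_4$ and is therefore finite --- exactly the mechanism behind Lemma~\ref{lm:finite} that you mention as an alternative.
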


\begin{proof}
We first prove that $\Per(n,k)$ is a finite set (hence $\Per^*(n,k)\subset\Per(n,k)$ is also finite). Pick $\lambda=(a,b)\in\R^2$ with $c=(a+ib)^2$. Assume $c\in\Per(n,k)$. According to Lemma~\ref{lm:complexTricorn2}, 
\[P_\lambda^n(c_0)=f_c^{2n}(0)=f_c^{2k}(0)=P_\lambda^{k}(c_0)\]
and again by Lemma~\ref{lm:complexTricorn2},
\begin{eqnarray*}
P_\lambda^{n+1}(c_1) & = & f_c\circ f_c^{2n}(f_c(c_1))=f_c\circ f_c^{2n}(0)\\
& = & f_c\circ f_c^{2k}(0)=f_c\circ f_c^{2k}(f_c(c_1))=P_\lambda^{k+1}(c_1).
\end{eqnarray*}
Since $P_\lambda(c_1)=P_\lambda(c_2)$, we get $P_\lambda^{n+1}(c_2)=P_\lambda^{k+1}(c_2)$. We write 
\[\Per_j(m,l):=\{(P,c_0,c_1,c_1)\in\mathcal{P}_4\, ; \ P^m(c_j)=P^l(c_j)\}.\]
The set $\Per_j(m,l)$ is an algebraic subvariety of $\mathcal{P}_4$ and $\Per_0(n,k)\cap\Per_1(n+1,k+1)\cap\Per_2(n+1,k+1)$ is contained in the compact set $\mathcal{C}_4$, hence it is finite. By Lemma~\ref{lm:complexTricorn}, the set of $\lambda\in\C^2$ with $\pi(\lambda)\in \Per_0(n,k)\cap\Per_1(n+1,k+1)\cap\Per_2(n+1,k+1)$ is thus finite. The finiteness of $\Per(n,k)$ follows directly.
\end{proof}

For $n>k\geq 1$, we also let $\mathsf{C}^*(n,k):=\{\theta\in\R/\Z\, ; \ (-2)^{n-1}\theta=(-2)^{k-1}\theta\}$. We now want to relate Misiurewicz combinatorics with Misiurewicz parameters.

\begin{lemma}\label{lm:magic}
Pick $\theta\in\mathsf{R}_\textup{mis}$ and let $n>k>1$ be minimal such that $\theta\in \mathsf{C}^*(n,k)$.
\begin{enumerate}
\item There exists a Misiurewicz parameter $c\in\partial \Mand^*_2$ such that the prime end impression of $\theta$ under $\Psi^*$ is reduced to $\{c\}$ and $c\in\Per^*(2n,2k)$,
\item Moreover, if $n-k$ is even, then $c\in\Per^*(n,k)$.
\end{enumerate} 
\end{lemma}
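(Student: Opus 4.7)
The strategy is to lift the problem from the anti-holomorphic family to the associated holomorphic degree-$4$ family $(P_\lambda)$, with $P_\lambda=f_c\circ f_c$ for $c=(a+ib)^2$, $\lambda=(a,b)\in\R^2$, and to invoke Kiwi's rigidity theorem. By minimality of $(n,k)$, the angle $\theta$ is strictly preperiodic under multiplication by $-2$, with exact preperiod $k-1$ and exact period $p:=n-k$. I attach to $\theta$ the degree-$4$ critical portrait $\Theta=(\Theta_0,\Theta_1,\Theta_2)\in\mathsf{Cb}$ prescribed by Lemma~\ref{lm:impressionanti}: $\Theta_0=\{\alpha,\alpha+\tfrac{1}{2}\}$ with $-2\alpha=\theta$, and $\Theta_1,\Theta_2$ are the two pairs, among the four solutions of $4\beta=\theta$, separated by $\Theta_0$. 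A direct orbit computation shows $\Theta\in\mathsf{Cb}_{\textup{mis}}$: each $\alpha\in\Theta_0$ maps under $\times 4$ to $-2\theta$, which is strictly preperiodic under $\times(-2)$ hence under $\times 4=(-2)^2$, and an equality $4^j\alpha=\alpha$ would yield $(-2)^{2j}\theta=\theta$, contradicting $\theta\in\mathsf{R}_{\textup{mis}}$; the same reasoning handles any $\beta\in\Theta_1\cup\Theta_2$ through its orbit $\beta\mapsto\theta\mapsto\cdots$.

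\medskip

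Theorem~\ref{tm:kiwipcf} then gives $\mathcal{I}_{\mathcal{C}_4}(\Theta)=\{(Q,c_0^Q,c_1^Q,c_2^Q)\}$ for a single Misiurewicz polynomial $Q$ of degree $4$. Corollary~\ref{cor:impressionanti} embeds a copy of the prime-end impression of $\theta$ under $\Psi^*$ into this singleton, via $c\mapsto\{P_\lambda\}$; hence that impression reduces to a single parameter $c$ with $\{f_c^2\}=\{Q\}$ in $\mathcal{P}_4$, and being a limit of real $\lambda$'s along the Tricorn ray, $c\in\partial\Mand_2^*$. For the $\Per^*(2n,2k)$ half of item~(1), I use combinatorial bookkeeping for $Q$: since $Q^j(0)$ lies in the dynamical ray of $Q$ of angle $(-2)^{2j-1}\theta$ for $j\geq 1$, applying $(-2)^n$ to $(-2)^{n-1}\theta=(-2)^{k-1}\theta$ gives $(-2)^{2n-1}\theta=(-2)^{2k-1}\theta$, whence $Q^n(0)=Q^k(0)$, i.e.\ $f_c^{2n}(0)=f_c^{2k}(0)$; strict preperiodicity of $0$ under $f_c$ (inherited from the Misiurewicz character of $Q$) rules out $f_c^{2(n-k)}(0)=0$.

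\medskip

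For item~(2), assume $n-k$ is even and invoke the anti-holomorphic ray landing. Kiwi's theorem applied to $Q$ gives that the $Q$-ray of angle $\beta$ lands at $c_1^Q$ for any $\beta\in\Theta_1$; hence the $Q$-ray of angle $\theta=4\beta$ lands at $Q(c_1^Q)=f_c(0)=c$. Since $\psi_c=\phi_\lambda$ and $\psi_c\circ f_c=\overline{\psi_c}^2$, the map $f_c$ sends the external ray of angle $\tau$ to that of angle $-2\tau$; iterating, the $f_c$-ray of angle $(-2)^j\theta$ lands at $f_c^{j+1}(0)$ for every $j\geq 0$. Taking $j=n-1$ and $j=k-1$, the hypothesis $(-2)^{n-1}\theta=(-2)^{k-1}\theta$ forces $f_c^n(0)=f_c^k(0)$; combined with $f_c^{n-k}(0)\neq 0$, this yields $c\in\Per^*(n,k)$. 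The main obstacle is the first step --- verifying that the lifted portrait $\Theta$ is Misiurewicz at level $4$ --- since Kiwi's theorem then does the heavy lifting; the remainder follows from combinatorial bookkeeping together with the compatibility $\psi_c=\phi_\lambda$ of the two B\"ottcher coordinates.
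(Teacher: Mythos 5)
Your argument is correct, and for item (1) it is essentially the proof in the paper: the same lifted portrait $\Theta=(\Theta_0,\Theta_1,\Theta_2)$, the same verification that it lies in $\mathsf{Cb}_{\textup{mis}}$, Theorem~\ref{tm:kiwipcf} plus Corollary~\ref{cor:impressionanti} to collapse the prime-end impression to a single $c$, and the relation $4^n\alpha=4^k\alpha$ to get $f_c^{2n}(0)=f_c^{2k}(0)$ (your exclusion of $f_c^{2(n-k)}(0)=0$ via strict preperiodicity of the critical points of $Q$ replaces the paper's ``center of a hyperbolic component versus boundary'' argument; both are fine). For item (2) you take a genuinely different route. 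The paper reruns the item-(1) argument with halved exponents ($n/2,k/2$ in the even case, $(n+1)/2,(k+1)/2$ in the odd case), i.e.\ it reads the relation off the $P_\lambda$-orbit of $c_0$ (resp.\ of $c_1$), and this is exactly where the parity hypothesis enters; as literally written, the odd-case substitution applied to $c_0$ would only yield $f_c^{n+1}(0)=f_c^{k+1}(0)$, and one must work with $c_1$ instead. You instead note that the ray of angle $\theta=4\beta$, $\beta\in\Theta_1$, lands at $P_\lambda(c_1)=f_c(0)=c$ (Lemma~\ref{lm:complexTricorn2}), and then push this single landing relation forward by $f_c$, which sends the ray of angle $\tau$ to the ray of angle $-2\tau$; the hypothesis $(-2)^{n-1}\theta=(-2)^{k-1}\theta$ then gives $f_c^n(0)=f_c^k(0)$ in one stroke. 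This buys something: your proof of (2) never uses that $n-k$ is even, so it actually establishes $c\in\Per^*(n,k)$ for all admissible $(n,k)$, and it sidesteps the halved-index bookkeeping altogether. Two cosmetic slips, neither affecting the proof: $(-2)^{2n-1}\theta=(-2)^{2k-1}\theta$ requires applying both $(-2)^n$ and $(-2)^k$ to the hypothesis (not a single application), and $Q^j(0)$ is the \emph{landing point} of the ray of angle $(-2)^{2j-1}\theta$ rather than a point lying on it, since the critical orbit is contained in $\J_Q$.
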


\begin{proof}
First, we prove 1. Let $\Theta_0:=\{\alpha,\alpha+\frac{1}{2}\}$ be such that $-2\alpha=\theta$. Let also $\{\beta_1,\beta_2,\beta_3,\beta_4\}$ be such that $4\beta_i=\theta$ for $1\leq i\leq4$, and let $\Theta_1,\Theta_2$ be such that $\Theta_1\cup\Theta_2=\{\beta_1,\beta_2,\beta_3,\beta_4\}$ and $\Theta_1\cap\Theta_2=\emptyset$ and $\Theta:=(\Theta_0,\Theta_1,\Theta_2)\in\mathsf{Cb}_0$. 

By assumption, $4^{n}\alpha=(-2)^{2n}\alpha=(-2)^{2k}\alpha=4^{k}\alpha$. Moreover, if $4^{(n-k)}\alpha\in \Theta_0$, then $(-2)^{2(n-k)}\theta=\theta$, which is exluded since $\theta\in\mathsf{R}_\textup{mis}$. As a consequence, $\alpha$ is strictly preperiodic under the map $M_4$. Similarly, $\beta_i$ is strictly $M_4$-preperiodic for all $i$, hence $\Theta\in\mathsf{Cb}_\textup{mis}$. Moreover, according to Theorem~\ref{tm:kiwipcf}, the impression $\mathcal{I}_{\mathcal{C}_4}(\Theta)$ is reduced to a singleton $\{P_\lambda\}$ where $P_\lambda$ Misiurewicz and $\Theta_i$ is a set of angles landing at the critical point $c_i$ of $P_\lambda$.
 
By Corollary~\ref{cor:impressionanti}, this implies that the prime end impression of $\theta$ is reduced to a singleton $\{c\}$. Writing $\lambda=(a,b)$, we thus have $c=(a+ib)^2$ and $f_c^{2n}(0)=P_\lambda^{n}(0)=P_\lambda^{k}(0)=f_c^{2k}(0)$, i.e. $c\in\Per(2n,2k)$. As $c$ is contained in the prime end impression of $\theta$ un $\Psi^*$, $c$ lies on the boundary of $\Mand_2^*$. If we had $f_c^{2(n-k)}(0)=0$, then $c$ would be a center of a hyperbolic component of $\Mand_2^*$ which contradicts the fact that $c\in\partial\Mand_2^*$ (see e.g. \cite{HubbardSchleicher}).

To prove $2$, if $n$ is even, we may proceed exactly as above, replacing $n$ and $k$ respectively with $n/2$ and $k/2$. Otherwise, we replace $n$ and $k$ with $(n+1)/2$ and $(k+1)/2$ respectively. This ends the proof.
\end{proof}

Notice that this result can be understood as follows: Misiurewicz combinatorics have to cluster to Misiurewicz parameters which are countable, i.e. we naturally have a rigidity property for the impression of such combinatorics.
On the other hand, Parabolic combinatorics, i.e. periodic ones under multiplication by $-2$, cluster on a set of parameters having to few rigid constraints to impose such a property in general (see Corollary~\ref{cor:impressionanti}).

\subsection{Landing of rays and the bifurcation measure}

We want first to prove the following, in the spirit of Theorem~\ref{tm:contlanding}.

\begin{theorem}\label{tm:landinganti}
There exists a set $\mathsf{R}\subset\R/\Z$ of full Lebesgue measure such that the map $\Phi:\,(\theta,r)\in\R/\Z\times\R_+^*\longmapsto \left(\Psi^*\right)^{-1}(e^{2r+2i\pi\theta}) \in \C\setminus\Mand_2^*$ extends continuously to $\mathsf{R}\times\{0\}$. Moreover, this set contains the set $\mathsf{R}_{\textup{mis}}$ and the extended map $\Phi$ induces a surjection between $\mathsf{R}_{\textup{mis}}\times\{0\}$ and the set of Misiurewicz parameters.
\end{theorem}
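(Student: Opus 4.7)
The plan is to transfer the question to the degree-$4$ polynomial family $(P_\lambda)_{\lambda\in\R^2}$, via $P_\lambda=f_c\circ f_c$ with $c=(a+ib)^2$ and $\lambda=(a,b)$, and then to invoke Theorem~\ref{tm:contlanding} together with the comparison from Corollary~\ref{cor:impressionanti}. First I would associate to each angle $\theta\in\R/\Z$ a critical portrait $\Theta(\theta)=(\Theta_0,\Theta_1,\Theta_2)\in\mathsf{Cb}_0$ as dictated by Lemma~\ref{lm:impressionanti}: take $\Theta_0=\{-\theta/2,\,-\theta/2+1/2\}$ and split the four $M_4$-preimages of $\theta$ into two disjoint unlinked pairs $\Theta_1,\Theta_2$ which are also disjoint from $\Theta_0$. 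Then Corollary~\ref{cor:impressionanti} guarantees that the prime-end impression of $\theta$ under $\Psi^*$ is contained in $\mathcal{I}_{\mathcal{C}_4}(\Theta(\theta))$ (up to the identification provided by the covering $\pi$).

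Let $\mathsf{Cb}_1\subset\mathsf{Cb}$ be the full $\mu_{\mathsf{Cb}}$-measure subset along which the Goldberg/landing map extends continuously, as produced in Theorem~\ref{tm:contlanding}. I would set
\[\mathsf{R}_0:=\{\theta\in\R/\Z\, ; \ \Theta(\theta)\in\mathsf{Cb}_1\}\quad\textup{and}\quad \mathsf{R}:=\mathsf{R}_0\cup\mathsf{R}_{\textup{mis}}.\]
For $\theta\in\mathsf{R}_0$, the impression $\mathcal{I}_{\mathcal{C}_4}(\Theta(\theta))$ is reduced to a singleton, hence the prime-end impression of $\theta$ is also a singleton; by the very definition of the prime-end impression, this is equivalent to joint continuity of $\Phi$ at $(\theta,0)$. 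For $\theta\in\mathsf{R}_{\textup{mis}}$, Lemma~\ref{lm:magic} directly provides a singleton Misiurewicz impression, so $\Phi$ likewise extends continuously at $(\theta,0)$, and the image is a Misiurewicz parameter.

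The hard part will be proving that $\mathsf{R}_0$ has full Lebesgue measure. The difficulty is that the curve $\{\Theta(\theta)\, ; \ \theta\in\R/\Z\}$ is a one-parameter subset of the three-dimensional translation manifold $\mathsf{Cb}_0$, so the property $\mu_{\mathsf{Cb}}(\mathsf{Cb}_1)=1$ does not pass to this slice via a Fubini argument. To bypass this, I would exploit the TCE characterization underlying the construction of $\mathsf{Cb}_1$: by Theorem~\ref{tm:combcont}, to conclude $\Theta(\theta)\in\mathsf{Cb}_1$ it suffices to exhibit a single TCE polynomial in $\mathcal{I}_{\mathcal{C}_4}(\Theta(\theta))$. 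A Graczyk--Smirnov-type statement transposed to the Tricorn setting should furnish, for Lebesgue-a.e.\ $\theta$, a radial limit $c_\theta=\lim_{r\to0}\Phi(\theta,r)\in\partial\Mand_2^*$ at which $f_{c_\theta}$ satisfies the (anti-holomorphic) Collet--Eckmann condition; the composition $P_{\lambda_\theta}=f_{c_\theta}\circ f_{c_\theta}$ then inherits the TCE property and lies in $\mathcal{I}_{\mathcal{C}_4}(\Theta(\theta))$, placing $\Theta(\theta)$ in $\mathsf{Cb}_1$.

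Finally, to establish that $\Phi(\cdot,0)$ restricted to $\mathsf{R}_{\textup{mis}}$ surjects onto the set of Misiurewicz parameters, I would reverse the association: given $c\in\bigcup_{n>k\geq1}\Per^*(n,k)$, Lemma~\ref{lm:complexTricorn2} combined with the finiteness provided by Lemma~\ref{lm:finiteintersection} implies that the lift $P_\lambda=f_c\circ f_c$ is a Misiurewicz polynomial in $\mathcal{P}_4$, so its dynamical external rays land at its critical value; pulling these angles back along the relations $-2\Theta_0=4\Theta_1=4\Theta_2=\{\theta\}$ of Lemma~\ref{lm:impressionanti} produces an angle $\theta\in\mathsf{R}_{\textup{mis}}$ with $\Phi(\theta,0)=c$, which completes the proof.
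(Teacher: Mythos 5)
Your reduction to the degree-$4$ family, the definition of $\mathsf{R}$ as (angles with rigid impression) $\cup\,\mathsf{R}_{\textup{mis}}$, and the continuity and surjectivity parts are in line with the paper's proof. But the step you yourself flag as the hard one is where the proposal has a genuine gap: you dispose of it by invoking ``a Graczyk--Smirnov-type statement transposed to the Tricorn setting'' asserting that for Lebesgue-a.e.\ $\theta$ the parameter ray of $\Mand_2^*$ has a radial limit at an (anti-holomorphic) Collet--Eckmann parameter. No such statement is available in the literature for the Tricorn; it is essentially equivalent to the full-measure landing assertion you are trying to prove, so quoting it is circular. You correctly observe that the curve $\{\Theta(\theta)\}$ has $\mu_{\mathsf{Cb}}$-measure zero in $\mathsf{Cb}_0$, so the Dujardin--Favre full-measure set $\mathsf{Cb}_1$ cannot be sliced by Fubini --- but having noted this, you must actually redo the measure estimate along the one-dimensional slice, not outsource it.

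The paper does exactly this work: it defines $\mathsf{Cb}^1:=\{\Theta\in\mathsf{Cb}_0\,;\,-2\Theta_0=4\Theta_1=4\Theta_2\}$, notes that Lemma~\ref{lm:impressionanti} gives a degree-$2$ unbranched cover $\mathsf{Cb}^1\to\R/\Z$, and then adapts Smirnov's coding argument and the counting of \cite[Lemma~7.25]{favredujardin} to the slice: coding a.e.\ angle by a sequence in $\Sigma_2$ via a partition of the Julia set, identifying the angles whose associated combinatorics fails TCE with the preimage of the strongly recurrent sequences, and bounding the number of depth-$n$ cylinder preimages by $A\,n\,3^n$ intervals of length $2^{-n}$, which yields $\dim_H(\R/\Z\setminus\mathsf{R})\leq\log3/\log4<1$ and hence Lebesgue measure zero for the exceptional set. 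Your proposal needs this (or an equivalent) argument spelled out; without it the claim that $\mathsf{R}_0$ has full measure is unsupported. As a secondary point, in the surjectivity step you should also verify that the degree-$4$ critical portrait of a lifted Misiurewicz polynomial $P_\lambda=f_c\circ f_c$ is of the constrained form $-2\Theta_0=4\Theta_1=4\Theta_2=\{\theta\}$, i.e.\ that it genuinely arises from a Tricorn angle; this is where Lemmas~\ref{lm:impressionanti} and~\ref{lm:complexTricorn2} and Theorem~\ref{tm:kiwipcf} must be combined, as in the paper.
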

We follow Smirnov~\cite{Smirnov} and Dujardin and Favre~\cite{favredujardin}.
\begin{proof}
As for the proof of Theorem~\ref{tm:contlanding}, we rely on Theorem~\ref{tm:combcont}. To an angle $\theta\in\R/\Z$, owing to Lemma~\ref{lm:impressionanti}, we can associate $\Theta\in \mathsf{Cb}_0$ and we let $\mathsf{R}_1$ be the set of angles $\theta$ such that the associated $\Theta$ satisfies that the impression $\mathcal{I}_{\mathcal{C}_4}$ contains a polynomial $P_\lambda$ satisfying the TCE condition. Let $\mathsf{R}:=\mathsf{R}_1\cup\mathsf{R}_{\textup{mis}}$. According to Corollary~\ref{cor:impressionanti} and to Theorem~\ref{tm:kiwipcf}, the map $\Phi$ extends as a continuous map to $\mathsf{R}\times\{0\}$ and the extended map $\Phi$ induces a surjection from $\mathsf{R}_{\textup{mis}}\times\{0\}$ to the set of Misiurewicz parameters. Indeed, any $\theta\in\mathsf{R}_{\textup{mis}}$ corresponds exactly to $2$ distinct $\Theta,\Theta'\in\mathsf{Cb}_{\textup{mis}}$. According to Lemma~\ref{lm:impressionanti} they correspond respectively to distinct $(a,b),(a',b')\in\R^2$ with $c=(a+ib)^2$. The 
conclusion follows from Theorem~\ref{tm:kiwipcf}.

It remains to prove that $\mathsf{R}$ has full-Lebesgue measure. Notice that $\mathsf{R}_{\textup{mis}}$ is countable, hence satisfies $\dim_H(\mathsf{R}_{\textup{mis}})=0$. Hence it has Lebesgue measure $0$ and the full measure property will be fulfilled by the set $\mathsf{R}_1$. Let
\[\mathsf{Cb}^1:=\{\Theta=(\Theta_0,\Theta_1,\Theta_2)\in\mathsf{Cb}_0\, ; -2\Theta_0=4\Theta_1=4\Theta_2\}.\]
Lemma~\ref{lm:impressionanti} allows to define a map $\pi:\mathsf{Cb}^1\longrightarrow\R/\Z$. It is clear that $\pi$ is a degree $2$ unbranched cover. In particular, $\dim_H(\R/\Z\setminus\mathsf{R})=\dim_H(\mathsf{Cb}^1\setminus\pi^{-1}\mathsf{R}_1)$. Following exactly the proof of \cite[Lemma~7.25]{favredujardin}, we get 
\begin{claim}
$\dim_H(\R/\Z\setminus\mathsf{R})\leq\log3/\log4<1.$
\end{claim}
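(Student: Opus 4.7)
The plan is to follow the Smirnov-style dimension estimate used in the proof of \cite[Lemma 7.25]{favredujardin}, adapted to the one-dimensional translation submanifold $\mathsf{Cb}^1\subset\mathsf{S}^3$.

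First, the countable set $\mathsf{R}_{\textup{mis}}$ (preperiodic non-periodic angles for $\theta\mapsto -2\theta$) has $\dim_H=0$, so it suffices to bound $\dim_H(\R/\Z\setminus\mathsf{R}_1)$. The covering map $\pi:\mathsf{Cb}^1\to\R/\Z$ is locally bi-Lipschitz between translation manifolds, so it preserves Hausdorff dimension, and the problem reduces to controlling the ``bad'' set $B:=\mathsf{Cb}^1\setminus\pi^{-1}\mathsf{R}_1$, consisting of those $\Theta\in\mathsf{Cb}^1$ whose impression $\mathcal{I}_{\mathcal{C}_4}(\Theta)$ contains no polynomial satisfying the TCE condition.

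Via the identification $\pi(\Theta)=\theta$ with $-2\Theta_0=4\Theta_1=4\Theta_2=\{\theta\}$, the expanding map $M_4:\theta\mapsto 4\theta$ on $\R/\Z$ encodes the forward dynamics of the critical value $P_\lambda(c_0)$ for any $(P_\lambda,c_0,c_1,c_2)\in\mathcal{I}_{\mathcal{C}_4}(\Theta)$. Combining the Przytycki--Rivera-Letelier--Smirnov characterization of TCE by slow recurrence of critical orbits \cite[Main Theorem]{PRLS} with Kiwi's semiconjugacy (Theorem~\ref{tm:kiwi}) identifying the Julia-set dynamics of $P_\lambda$ with a quotient of $M_4$ on $\mathbb{S}^1$, I would translate TCE failure on $\mathcal{I}_{\mathcal{C}_4}(\Theta)$ into a purely combinatorial ``bad return'' condition on the $M_4$-orbit of $\theta$.

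Then I would produce, for each $n\geq 1$, a covering of $B$ by at most $C\cdot 3^n$ cylinders of diameter $\lesssim 4^{-n}$: the bad-return condition forces the $M_4$-itinerary of $\theta$ to avoid, at every scale, at least one of the four possible preimage branches, leaving at most three admissible choices per step. The standard Hausdorff-dimension estimate then yields
\[\dim_H(B)\leq \limsup_{n\to\infty}\frac{\log(C\cdot 3^n)}{n\log 4}=\frac{\log 3}{\log 4}<1.\]
The hard part is the faithful translation between the parameter-space recurrence condition characterizing TCE failure and a symbolic condition on the $M_4$-itinerary of $\theta$; it relies essentially on Kiwi's semiconjugacy and on the rigidity statement of Theorem~\ref{tm:combcont} to ensure that non-TCE combinatorics genuinely force a specific forbidden symbolic pattern readable purely from $\Theta$.
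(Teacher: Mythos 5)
Your reduction steps (discarding the countable set $\mathsf{R}_{\textup{mis}}$ and passing through the degree--$2$ cover $\pi:\mathsf{Cb}^1\to\R/\Z$) are consistent with the paper, but the covering argument that is supposed to produce the exponent $\log 3/\log 4$ has a genuine gap. You assert that failure of TCE ``forces the $M_4$-itinerary of $\theta$ to avoid, at every scale, at least one of the four possible preimage branches, leaving at most three admissible choices per step,'' so that the bad set is covered by $C\cdot 3^n$ cylinders of the full Markov partition of $M_4$. Nothing in the TCE (strong recurrence) condition excludes a prescribed branch at every step: a strongly recurrent orbit may perfectly well visit all four branches of $M_4$ at every scale, so this covering is not available and you give no argument for it. The number $3$ in the bound has a different origin, and your outline also never uses the one input that makes the estimate work.

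The actual mechanism (following Smirnov and \cite[Lemma~7.25]{favredujardin}) is in two steps. First, using the partition of $\J_{a,b}$ into $J_0$, $J_1$ and the impression of the critical external ray (\cite[Lemma 7.24]{favredujardin}), one defines a kneading map $\kappa:(\R\setminus\Q)/\Z\to\Sigma_2=\{0,1\}^{\N}$, where $\kappa(\theta)_n$ records in which component $I_\epsilon$ of $\R/\Z\setminus\{\alpha,\alpha+\frac{1}{2}\}$ (with $-2\alpha=\theta$) the point $4^n\theta$ lies; by Smirnov~\cite{Smirnov}, $\theta$ fails TCE if and only if $\kappa(\theta)$ lies in the strongly recurrent set $SR$, and the crucial input is that $SR$ has Hausdorff dimension $0$ in the symbol space. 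Second, an induction on $n$ shows that $\kappa^{-1}$ of a depth-$n$ cylinder is covered by at most $A\,n\,3^n$ intervals of exponentially small length: here $3$ counts the components of $M_4^{-1}(I)\cap I_\epsilon$, i.e.\ the preimage branches of an interval meeting a half-circle, in a skew setting where the partition itself moves with $\theta$. Only by combining the subexponential number of cylinders needed to cover the dimension-zero set $SR$ with this fiberwise count does one get $\dim_H\leq\log 3/\log 4$; without the dimension-zero statement, a per-cylinder count applied to all $2^n$ cylinders gives a bound worse than $1$, and with your unproved ``three branches'' claim the proof does not go through. Note finally that the translation of TCE failure into a symbolic condition is done via Smirnov's characterization and this dynamical partition, not via Theorem~\ref{tm:kiwi} or the rigidity of Theorem~\ref{tm:combcont}, which play no role in the claim.
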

In particular, $\lambda_{\R/\Z}(\R/\Z\setminus\mathsf{R})=0$ which ends the proof.
\end{proof}

\begin{proof}[Proof of The Claim]
Pick $\Theta\in\mathsf{Cb}_1$ and let $c=(a+ib)^2$ be such that $P_{a,b}\in\mathcal{I}_{\mathcal{C}_4}(\Theta)$. According to \cite[Lemma 7.24]{favredujardin}, there exists a partition of $\J_{a,b}$ into three sets $J_0$, $J_1$ and the impression of the external ray of $P_{c,a}$ in $\J_{a,b}$. Let $\Sigma_2:=\{0,1\}^\mathbb{N}$ and $\kappa:(\R\setminus\Q)/\Z\longrightarrow\Sigma_2$ be defined as follows: we say that $\kappa(\theta)_n=\epsilon\in\{0,1\}$ if $4^n\theta\in I_\epsilon$, where $I_\epsilon$ is the connected component of $\R/\Z\setminus\{\alpha,\alpha+\frac{1}{2}\}$, with $-2\alpha=\theta$, such that angles in $I_\epsilon$ land in $J_\epsilon$.

Following Smirnov~\cite{Smirnov}, we see that $\theta\in\R/\Z$ fails the TCE condition if and only if $\theta\in\kappa^{-1}(SR)$ i.e. is strongly recurrent. The precise definition of $SR$ can be found in~\cite{Smirnov}. It is known that $SR$ has Hausdorff dimension $0$ and, following Dujardin and Favre, if $C_n$ is any cylinder of depth $n$ in $\Sigma_2$, $\kappa^{-1}(C_n)$ consists of the intersection of $(\R\setminus\Q)/\Z$ with at most $A \times n 3^n$ intervals of length at most $2^{-n}$, where $A$ is a constant independent of $n$.

Indeed, if $4^n\theta$ turns once around $\R/\Z$ so that $4^n\theta=\theta$, then $\theta\in\Q$, which is excluded. We now proceed by induction on $n$: let $I$ be an interval of $\R/\Z$ of length $\ell_I<1/4$. Then, either $4\theta\notin I$ and $M_d^{-1}(I)\cap I_\epsilon$ consists in $2$ intervals, or $4\theta\in I$ and $M_d^{-1}(I)\cap I_\epsilon$ consists in $3$ intervals, which can occur only for one of the $N(n)$ intervals. As a consequence, $N(n+1)\leq 2\cdot (N(n)-1)+3\leq 2 N(n)+1$, whence $N(n)\leq A \times n 3^n$. The estimate for the Hausdorff dimension then easily follows.
\end{proof}

Thanks to Theorem~\ref{tm:landinganti}, we can define the landing map and the bifurcation measure for the Tricorn.

\begin{definition}
 The \emph{landing map of the Tricorn} is the measurable map $\ell:\R/\Z\rightarrow\partial\Mand_2^*$ defined by $\ell(\theta):=\Phi(e^{2i\pi\theta},0)$ for any $\theta\in\mathsf{R}$. We define the \emph{bifurcation measure of the Tricorn} as the probability measure
\[\mu_\bif^*:=\ell_*\left(\lambda_{\R/\Z}\right).\]
 \end{definition} 

As an immediate consequence of Theorem~\ref{tm:landinganti}, we have proven Theorem~\ref{tm:mubifanti}. Notice also that we have $\mathsf{R}_{\textup{mis}}\subset\mathsf{R}$.

\subsection{Distribution of Misiurewicz combinatorics: Theorem~\ref{tm:distribanti}}

Recall that, for $n>k>1$, we denoted by $\Per^*(n,k)$ the set of parameters such that $f_c^n(0)=f^{k}_c(0)$ and $f^{n-k}_c(0)\neq0$.
Observe that we do not consider the case $\Per^*(n,1)$ since that set is empty. Indeed, since the map $f_c$ has local degree $2$ at $0$, the point $f_c(0)$ cannot have a preimage distinct from $0$. In particular, any parameters for which $f_c^n(0)=f_c(0)$ satisfies $f^{n-1}_c(0)=0$.

For any $n\geq4$, pick $1<k(n)<n$. Let $\mathsf{X}_n:=\mathsf{C}^*(n,k(k))\setminus\mathsf{C}^*(n-k(n)+1,1)$, $d_n:=\textup{Card}(\mathsf{X}_n)=2^{n-1}-2^{k(n)-1}-2^{n-k(n)}+1$ and
\[\nu_n:=\frac{1}{d_n}\sum_{\theta\in\mathsf{X}_n}\delta_\theta.\]
Let also $X_n:=\ell\left(\mathsf{X}_n\right)\subset\Per^*(2n,2k(n))$, and let $\mu_n^*$ be the probability measure 
\[\mu_n^*:=\frac{1}{d_n}\sum_{c\in X_n}\mathcal{N}_{\R/\Z}(c)\cdot\delta_c~,\]
where $\mathcal{N}_{\R/\Z}(c)\geq1$ is the number of angles $\theta\in\mathsf{R}_{\textup{mis}}$ for which $\ell(\theta)=c$, i.e. $\mu_n^*=\ell_*(\nu_n)$.

Our aim here is to prove the following.

\begin{theorem}\label{tm:distribantiangle}
The sequence of measures $(\nu_n)_{n\geq4}$ converges weakly to $\lambda_{\R/\Z}$.
\end{theorem}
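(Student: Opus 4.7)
The plan is to invoke Lemma~\ref{lm:equidsitrcircle}. First, I would set $A_n := \mathsf{C}^*(n,k(n))$ and $B_n := \mathsf{C}^*(n-k(n)+1,1)$, so that $\mathsf{X}_n = A_n \setminus B_n$ by definition. Multiplying the equation $(-2)^{n-k(n)}\theta = \theta$ defining $B_n$ by $(-2)^{k(n)-1}$ immediately gives $B_n \subset A_n$.

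Second, I would identify each $A_n$, $B_n$ as a complete set of $Q_n$-th (resp.\ $Q'_n$-th) rational points in $\R/\Z$ where
\[Q_n := |(-2)^{n-1} - (-2)^{k(n)-1}| = 2^{k(n)-1}(2^{n-k(n)} - (-1)^{n-k(n)}) \quad \text{and} \quad Q'_n := 2^{n-k(n)} - (-1)^{n-k(n)}.\]
A one-line Riemann sum argument shows that sets of equispaced points of the form $\{p/Q\}_{p=0}^{Q-1}$ are equidistributed on $\R/\Z$ as $Q \to \infty$, which gives equidistribution of $(A_n)$ since $Q_n \to \infty$. A direct computation yields $\textup{Card}(B_n)/\textup{Card}(A_n) = 2^{1-k(n)} \leq 1/2$ (using $k(n) \geq 2$), so the cardinality hypothesis of Lemma~\ref{lm:equidsitrcircle} is automatic: $\textup{Card}(A_n \setminus B_n)/\textup{Card}(A_n) \geq 1/2$ uniformly in $n$.

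Provided $n - k(n) \to \infty$, the sequence $(B_n)$ is also equidistributed, Lemma~\ref{lm:equidsitrcircle} then concludes that $\mathsf{X}_n$ is equidistributed, and the remark following that lemma yields $\nu_n \to \lambda_{\R/\Z}$ weakly. The main obstacle is the degenerate case where $n - k(n)$ remains bounded along a subsequence (for instance $k(n) = n-1$, where $|B_n|=3$ constantly): one cannot apply the lemma directly, but $|B_n|/|A_n| \to 0$ makes $B_n$ asymptotically negligible. Writing
\[\nu_n = \frac{\textup{Card}(A_n)}{\textup{Card}(A_n \setminus B_n)}\,\mu_{A_n} - \frac{\textup{Card}(B_n)}{\textup{Card}(A_n \setminus B_n)}\,\mu_{B_n},\]
with $\mu_{A_n}, \mu_{B_n}$ the equidistributed probability measures on $A_n, B_n$, the first coefficient tends to $1$ and the second to $0$, while $\mu_{A_n} \to \lambda_{\R/\Z}$ weakly and $\mu_{B_n}$ stays a probability measure; hence $\nu_n \to \lambda_{\R/\Z}$ in this case too. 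Beyond this marginal case analysis and the parity bookkeeping in the cardinalities, no genuine analytic difficulty arises.
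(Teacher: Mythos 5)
Your proof is correct and takes essentially the same route as the paper: both write $\mathsf{X}_n=\mathsf{C}^*(n,k(n))\setminus\mathsf{C}^*(n-k(n)+1,1)$, bound the cardinality ratio below by $1/2$ (your exact count $2^{k(n)-1}\bigl(2^{n-k(n)}-(-1)^{n-k(n)}\bigr)$ matches the paper's even/odd case split), check equidistribution of the two families of roots, and conclude via Lemma~\ref{lm:equidsitrcircle}. Your explicit measure-decomposition treatment of the degenerate case where $n-k(n)$ stays bounded is simply a more detailed version of the paper's terse remark that $\mathsf{C}^*(n-k(n)+1,1)$ is either equidistributed or $k(n)\geq n-K$, so no substantive difference remains.
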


\begin{proof}
As in the proof of the above lemma, if $n-k(n)$ is even, we have 
\begin{eqnarray*}
d_n & = & 2^{n-1}-2^{k(n)-1}-2^{n-k(n)}+1\\
& \geq & 2^{n-1}-2^{k(n)-1}-\frac{1}{2}\left(2^{n-1}-2^{k(n)-1}\right)\\
& \geq & \frac{1}{2}\textup{Card}(\mathsf{C}^*(n,k(n)))~,
\end{eqnarray*}
and if $n-k(n)$ is odd, we also have
\begin{eqnarray*}
d_n & = & 2^{n-1}+2^{k(n)-1}-2^{n-k(n)}-1\\
& \geq & 2^{n-1}+2^{k(n)-1}-\frac{1}{2}\left(2^{n-1}+2^{k(n)-1}\right)\\
& \geq & \frac{1}{2}\textup{Card}(\mathsf{C}^*(n,k(n)))~.
\end{eqnarray*}
Moreover, it is easy to see that the sequence $\left(\mathsf{C}^*(n,k(n))\right)_{n\geq2}$ is equidistributed for $\lambda_{\R/\Z}$ and that $\left(\mathsf{C}^*(n-k(n)+1,1)\right)_{n\geq2}$ is either equidistributed or $k(n)\geq n-K$ for some integer $K\geq1$. By Lemma~\ref{lm:equidsitrcircle}, the sequence $(\mathsf{X}_n)_n$ is equidistributed. In particular, the sequence of probability measures $(\nu_n)$ converges weakly towards $\lambda_{\R/\Z}$.
\end{proof}

We now can prove Theorem~\ref{tm:distribanti}.

\begin{proof}[Proof of Theorem~\ref{tm:distribanti}]
It follows directly from Theorem~\ref{tm:landinganti}, Lemmas~\ref{lm:finiteintersection} and \ref{lm:magic} combined with Theorem~\ref{tm:distribantiangle} and Theorem~\ref{tm:measure}.
\end{proof}

\begin{remark}
\textup{We expect the measure which is \emph{equidistributed} on the set $\Per^*(n,k(n))$ converges towards to the bifurcation measure $\mu_\bif^*$, whenever $1< k(n)<n$.}
\end{remark}

\section{The bifurcation measure of the Tricorn and real slices}
Let $\mu_{\bif,d}$ (resp. $\nu_{\bif,d}$) denote the bifurcation measure on the moduli space $\mathcal{P}_d$ of degree $d$ polynomials with marked critical points (resp. on the moduli space  $\mathcal{M}_d$ of degree $d$ rational maps with marked critical points). Let $\{P\}\in\mathcal{P}_d$ (resp. $\{R\}\in\mathcal{M}_d$), then $\{P^{\circ m}\} \in\mathcal{P}_{d^m}$ (resp. $\{R^{\circ m}\} \in\mathcal{M}_{d^m}$). Let $\mathrm{it}_m:\{P\}\mapsto \{P^{\circ m}\}$ (resp. $\mathrm{it}_m:\{R\}\mapsto \{R^{\circ m}\}$).
We have the following interesting result that relates the image of the support of the bifurcation measure under the iteration map $\mathrm{it}_m$ with the support of the bifurcation measure. It is new to the authors' knowledge. More precisely:
\begin{proposition}\label{prop_inclusion_slice}
Let $m\geq 2$, then with the above notations, we have that:
\begin{itemize}
	\item if $\{P\}\in\mathcal{P}_d$ satisfies $\{P\}\in \mathrm{supp}(\mu_{\bif,d})$ then $\{P^{\circ m}\}\in \mathrm{supp}(\mu_{\bif,d^m})$. In other words 
	\[ \mathrm{it}_m(\mathrm{supp}(\mu_{\bif,d})) \subset \mathrm{supp}(\mu_{\bif,d^m}) \cap \mathrm{it}_m(\mathcal{P}_d);\]
	\item if $\{R\}\in\mathcal{M}_d$ satisfies $\{R\}\in \mathrm{supp}(\nu_{\bif,d})$ then $\{R^{\circ m}\}\in \mathrm{supp}(\nu_{\bif,d^m})$. In other words 
	\[ \mathrm{it}_m(\mathrm{supp}(\nu_{\bif,d})) \subset \mathrm{supp}(\nu_{\bif,d^m}) \cap \mathrm{it}_m(\mathcal{M}_d).\]
\end{itemize}
\end{proposition}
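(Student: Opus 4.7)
My plan is to use the characterization of $\mathrm{supp}(\mu_{\bif,d})$ (resp.\ $\mathrm{supp}(\nu_{\bif,d})$) as the closure of \emph{strong Misiurewicz} parameters, namely those $\{P\}$ (resp.\ $\{R\}$) for which every critical point is strictly preperiodic and lands on a repelling cycle. This description in $\mathcal{P}_d$ is due to Dujardin--Favre \cite{favredujardin} and in $\mathcal{M}_d$ to Buff--Epstein (see also \cite{BB1}). Since $\mathrm{it}_m$ is induced by a polynomial map on coefficients, it is continuous, and the supports are closed. Therefore it suffices to check the set-theoretic inclusion on strong Misiurewicz parameters and then pass to the closure.

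The main verification is thus: if $\{P\}\in\mathcal{P}_d$ is strong Misiurewicz, then so is $\{P^{\circ m}\}\in\mathcal{P}_{d^m}$, and similarly for rational maps. The critical set of $P^{\circ m}$ is
\[
C(P^{\circ m})=\bigcup_{k=0}^{m-1} P^{-k}\bigl(C(P)\bigr).
\]
Pick $c'\in P^{-k}(c)$ with $c\in C(P)$ and $0\le k\le m-1$. I would argue that $c'$ is strictly preperiodic under $P^{\circ m}$: preperiodicity is clear since $P^{mn}(c')=P^{mn-k}(c)$ and $c$ is preperiodic; and the absence of true periodicity follows from the fact that any identity $P^{m\ell}(c')=c'$ would, after applying $P^{k}$, give $P^{m\ell}(c)=c$, contradicting strict preperiodicity of $c$ under $P$. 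Next, the cycle ultimately hit by the $P^{\circ m}$-orbit of $c'$ is the $P$-cycle ultimately hit by $c$, and if this cycle has $P$-period $p$ and multiplier $\lambda$ with $|\lambda|>1$, then under $P^{\circ m}$ it becomes a cycle of period $q=p/\gcd(p,m)$ with multiplier $\lambda^{m/\gcd(p,m)}$, still of modulus $>1$. Hence the cycle remains repelling, and $\{P^{\circ m}\}$ is strong Misiurewicz.

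Finally, for a general $\{P\}\in\mathrm{supp}(\mu_{\bif,d})$ I would approximate it by a sequence of strong Misiurewicz classes $\{P_n\}$, which exists by the Dujardin--Favre density result. By the previous step, $\{P_n^{\circ m}\}\in\mathrm{supp}(\mu_{\bif,d^m})$ for every $n$, and by continuity of $\mathrm{it}_m$ we have $\{P_n^{\circ m}\}\to\{P^{\circ m}\}$. Closedness of the support then yields $\{P^{\circ m}\}\in\mathrm{supp}(\mu_{\bif,d^m})\cap\mathrm{it}_m(\mathcal{P}_d)$. The rational case is entirely analogous.

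The main obstacle I expect is the bookkeeping in the preservation step: one must handle the marking of the $d^m-1$ critical points of $P^{\circ m}$ coherently (each critical point of $P$ of multiplicity $\mu$ contributes with the appropriate multiplicity to $C(P^{\circ m})$, together with its iterated preimages), and carefully keep track of the multipliers under the change of the dynamical system from $P$ to $P^{\circ m}$. Once this is set up, the transfer of the strong Misiurewicz property is essentially formal.
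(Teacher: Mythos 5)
Your proposal is correct and follows essentially the same route as the paper: reduce to Misiurewicz parameters via their density in the support (Dujardin--Favre for $\mathcal{P}_d$, Buff--Epstein et al.\ for $\mathcal{M}_d$), observe that the Misiurewicz property passes from $P$ to $P^{\circ m}$ because $C(P^{\circ m})=\bigcup_{k=0}^{m-1}P^{-k}(C(P))$, and conclude by the fact that Misiurewicz classes lie in the support together with continuity of $\mathrm{it}_m$ and closedness of the support. Your write-up merely makes explicit the preperiodicity and multiplier bookkeeping that the paper leaves implicit.
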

\begin{proof}
We first prove the case where $\{P\}\in\mathcal{P}_d$. By density of Misiurewicz parameters in $\mathrm{supp}(\mu_{\bif,d})$ (see \cite{favredujardin}), we can assume that $\{P\}$ is Misiurewicz, i.e. all critical points of $P$ are preperiodic to repelling cycles. Then $P^{\circ m}$ is also Misiurewicz since its critical points are the preimages of the critical points of $P$ by $P^{\circ k}$ for $k\leq m-1$. But then, it is known that such conjugacy classes $\{P^{\circ m}\}$ belong to $\mathrm{supp}(\mu_{\bif,d^m})$ (see again \cite{favredujardin}). This ends the proof in $\mathcal{P}_d$.

The case of rational maps is similar (see \cite{buffepstein}, \cite{Article1} and \cite{Article2}).
\end{proof}

\begin{question} \rm Does the reverse inclusion hold? Namely, if $\{P^{\circ m} \}\in \mathrm{supp}(\mu_{\bif,d^m})$, is it true that $\{P\} \in \mathrm{supp}(\mu_{\bif,d})$ ? We expect the answer to be positive.
\end{question}

We proceed similarly for $\mu_\bif^*$. Let $\pi:\C\rightarrow\mathcal{P}_4$ be the map defined by 
\[\pi(c):=([f_{c^2}^2],0,c_1,c_2), \ c\in\C,\]
where $(c_1,c_2):=(i\bar{c},-i\bar{c})$.
Let also $\mu$ be the bifurcation measure of the complex surface $\mathcal{X}:=\{c_1=-c_2\}$ of the moduli space $\mathcal{P}_4$ of degree $4$ critically marked polynomials and let $\mathcal{S}\subset\mathcal{X}$ be the smooth real surface defined as the image of the map $\pi$. Then proceeding as in the proof of Proposition~\ref{prop_inclusion_slice}, we have:
\begin{proposition}\label{tm:mubifantislice}
Let $c \in \C$ be such that $c \in \mathrm{supp}(\mu_\bif^*)$, then $\pi(c) \in \mathrm{supp}(\mu)$. In other words, $\pi(\mathrm{supp}(\mu_\bif^*)) \subset \mathrm{supp}(\mu)\cap \mathcal{S}$.
\end{proposition}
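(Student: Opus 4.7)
The plan is to mimic the proof of Proposition~\ref{prop_inclusion_slice}, using density of Misiurewicz parameters on both sides and the fact that the map $\pi$ sends Misiurewicz parameters of the Tricorn to Misiurewicz parameters of $\mathcal{P}_4$ lying on $\mathcal{X}$.

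First, I pick $c_0 \in \mathrm{supp}(\mu_\bif^*)$. By Theorem~\ref{tm:distribanti}, the probability measures $\mu_n^*$ converge weakly to $\mu_\bif^*$ and are supported on the finite sets $(\ell)_*(\mathsf{C}^*(n,k(n))) \subset \Per^*(2n,2k(n))$ of Misiurewicz parameters of the Tricorn. A standard weak-convergence argument then produces a sequence $c_k$ of Misiurewicz parameters of the Tricorn with $c_k \to c_0$. Since $\pi$ is continuous and $\mathrm{supp}(\mu)$ is closed in $\mathcal{P}_4$, it is enough to show that $\pi(c_k) \in \mathrm{supp}(\mu)$ for every $k$.

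Next, I check that each $\pi(c_k)$ is a Misiurewicz parameter of $\mathcal{P}_4$. Writing $c_k = (a_k+ib_k)^2$ and $\lambda_k = (a_k,b_k)$, the critical points of $P_{\lambda_k} = f_{c_k}^2$ are $0$, $c_1^{(k)} := ia_k+b_k$ and $c_2^{(k)} = -c_1^{(k)}$. By Lemma~\ref{lm:complexTricorn2}, $f_{c_k}(c_j^{(k)})=0$ for $j=1,2$, so for any $n\geq 1$ one has $P_{\lambda_k}^n(c_j^{(k)}) = f_{c_k}^{2n-1}(0)$ while $P_{\lambda_k}^n(0) = f_{c_k}^{2n}(0)$. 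Since $c_k\in\Per^*(n_k,l_k)$ for some $n_k > l_k \geq 1$, the point $0$ is strictly preperiodic under $f_{c_k}$, say to a cycle of exact period $p\geq 1$, which is repelling because $c_k\in\partial\mathbf{M}_2^*$. An elementary parity argument (choosing $N>M$ of fixed parity, sufficiently large and with $p\mid 2(N-M)$) shows that each critical point of $P_{\lambda_k}$ is strictly preperiodic under $P_{\lambda_k}$ to a repelling cycle. Hence $\pi(c_k)$ is a Misiurewicz parameter of $\mathcal{P}_4$ lying on $\mathcal{X}$.

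The remaining step, which I expect to be the main obstacle, is to verify that any Misiurewicz parameter of $\mathcal{P}_4$ lying on $\mathcal{X}$ belongs to $\mathrm{supp}(\mu)$. This is a direct analogue on the surface $\mathcal{X}$ of the Dujardin--Favre density theorem for $\mathrm{supp}(\mu_{\bif,d})$. The key point is that along $\mathcal{X}$ the critical points $c_1$ and $c_2 = -c_1$ have identical dynamics (since $P_\lambda$ is even), so the restricted bifurcation currents $T_{\bif,1}|_\mathcal{X}$ and $T_{\bif,2}|_\mathcal{X}$ coincide and $\mu$ is, up to a positive factor, the wedge product $T_{\bif,0}|_\mathcal{X}\wedge T_{\bif,1}|_\mathcal{X}$. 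By the arguments of \cite[\S 6]{favredujardin} and \cite{BB1} adapted to $\mathcal{X}$, at a Misiurewicz parameter on $\mathcal{X}$ the two analytic hypersurfaces $\{P^{n_j}(c_j) = P^{m_j}(c_j)\} \cap \mathcal{X}$ for $j = 0, 1$ meet transversely at $\pi(c_k)$ (because the preperiodicity conditions are non-persistent on $\mathcal{X}$) and both carry positive mass of the corresponding bifurcation currents, so every neighborhood of $\pi(c_k)$ has positive $\mu$-mass. Applying this to each $\pi(c_k)$ and passing to the limit gives $\pi(c_0)\in\mathrm{supp}(\mu)$, completing the proof.
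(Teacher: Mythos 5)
Your overall architecture is the same as the paper's: approximate a point of $\mathrm{supp}(\mu_\bif^*)$ by Misiurewicz parameters of the Tricorn (density follows from Theorem~\ref{tm:distribanti}, or directly from the construction of $\mu_\bif^*$), observe that $\pi$ sends such a parameter to a Misiurewicz point of $\mathcal{P}_4$ lying on $\mathcal{X}$, and conclude by continuity of $\pi$ and closedness of $\mathrm{supp}(\mu)$, exactly as in Proposition~\ref{prop_inclusion_slice}. Your verification that $\pi(c_k)$ is Misiurewicz is essentially right, although the reason the landing cycles are repelling is not that $c_k\in\partial\Mand_2^*$ but that $P_{\lambda_k}=f_{c_k}^2$ is postcritically finite with no periodic critical point, so all its cycles are repelling or superattracting and the latter is excluded; this is a minor repair.

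The genuine gap is in the step you yourself flag as the main obstacle: that a Misiurewicz parameter of $\mathcal{X}$ belongs to $\mathrm{supp}(\mu)$. You justify the transversality at $\pi(c_k)$ of the two curves $\{P^{n_j}(c_j)=P^{m_j}(c_j)\}\cap\mathcal{X}$ by saying the preperiodicity conditions are ``non-persistent on $\mathcal{X}$''. Non-persistence only says each condition cuts out a proper analytic curve in the surface $\mathcal{X}$; it does not give transversality of their intersection, nor even that the intersection is isolated, and transversality at Misiurewicz parameters is precisely the hard content of this kind of statement (it is obtained in the literature either via Epstein's quadratic-differential transversality, as used by Buff--Epstein, or via the transfer/similarity arguments of \cite{Article1}, not by a genericity remark). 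Moreover, even granting transversality, the inference ``both curves carry positive mass of the corresponding bifurcation currents, hence every neighborhood of $\pi(c_k)$ has positive $\mu$-mass'' conflates the wedge product $T_{\bif,0}|_\mathcal{X}\wedge T_{\bif,1}|_\mathcal{X}$ with an intersection of the two curves; to get positive mass of the Monge--Amp\`ere measure one needs a quantitative lower bound near the point (comparison of each current with an integration current, or the large-scale estimates of \cite{Article1}), none of which follows from what you wrote. The paper avoids all of this by invoking \cite{Article1} for the fact that Misiurewicz parameters of the family lie in the support of its bifurcation measure; replacing your transversality paragraph by that citation (or by an actual proof along Buff--Epstein/Gauthier lines) is what is needed to close the argument.
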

The fact that Misiurewicz parameters belong to the support of $\mu$ follows from \cite{Article1}.

\begin{question} \rm Is $\mu_\bif^*$ the \emph{slice} $\langle\mu,\mathcal{S}\rangle$ in the sense of measures of $\mu$ along $\mathcal{S}$ ?
It was one of the initial approach we had to construct $\mu_\bif^*$. Notice that it is not clear that $\langle\mu,\mathcal{S}\rangle$ is even well-defined (see for instance \cite{tiozzo} for the delicate question of the real slicing of the harmonic measure of the Mandelbrot set). Numerical evidences of such results, in the spirit of Milnor's explorations (\cite{Milnor5}), would be a first step.
\end{question}

\bibliographystyle{short}
\bibliography{biblio}

\end{document}